\numberwithin{equation}{section}
\newtheorem{definition}{Definition}
\newtheorem{lemma}{Lemma}
\newtheorem{fact}{Fact}
\newtheorem{assum}{Assumption}
\newcommand*\bigcdot{\mathpalette\bigcdot@{.5}}
\newcommand*\bigcdot@[2]{\mathbin{\vcenter{\hbox{\scalebox{#2}{$\m@th #1\bullet$}}}}}
\newcommand{\zero}{\mathbf{0}}
\newcommand{\argmin}{\mathop{\mathrm{argmin}}}
\newcommand{\red}[1]{{\color{black} #1}}
		\renewcommand{\caption}[2][\relax]{% Make a new \caption
			{\raggedright\textbf{\ALG@name~\thealgorithm} ##2\par}%
			\ifx\relax##1\relax % #1 is \relax
			\addcontentsline{loa}{algorithm}{\protect\numberline{\thealgorithm}##2}%
			\else % #1 is not \relax
			\addcontentsline{loa}{algorithm}{\protect\numberline{\thealgorithm}##1}%
			\fi
			\kern2pt\hrule\kern2pt
		}
\title{Finding Local Minimax Points via (Stochastic) Cubic-Regularized GDA: Global Convergence and Complexity}
\author[1]{Ziyi Chen}
\author[2]{Zhengyang Hu}
\author[3]{Qunwei Li}
\author[4]{Zhe Wang}
\author[1]{Yi Zhou}
\affil[1]{Department of Electrical \& Computer Engineering, University of Utah\\
	 
	\textit{\{u1276972, yi.zhou\}@utah.edu}~}
\affil[2]{School of the Gifted Young, University of Science and Technology of China (USTC)\\ 
	\textit{datou30@mail.ustc.edu.cn}~}
\affil[3]{Ant Group\\
	\textit{qunwei.qw@antgroup.com}~}
\affil[4]{JD company\\
	\textit{wang.10982@osu.edu}}
\title{A Cubic Regularization Approach for Finding Local Minimax Points in Nonconvex Minimax Optimization}
\begin{document}
%################################################
\doparttoc % Tell to minitoc to generate a toc for the parts
\faketableofcontents % Run a fake tableofcontents command for the partocs
%\part{} 
%################################################

\maketitle
% \twocolumn[
% \icmltitle{Finding Local Minimax Points via (Stochastic) Cubic-Regularized GDA: Global Convergence and Complexity}
% \icmlsetsymbol{equal}{*}

% \begin{icmlauthorlist}
% \icmlauthor{Firstname1 Lastname1}{equal,yyy}
% \icmlauthor{Firstname2 Lastname2}{equal,yyy,comp}
% \icmlauthor{Firstname3 Lastname3}{comp}
% \icmlauthor{Firstname4 Lastname4}{sch}
% \icmlauthor{Firstname5 Lastname5}{yyy}
% \icmlauthor{Firstname6 Lastname6}{sch,yyy,comp}
% \icmlauthor{Firstname7 Lastname7}{comp}
% %\icmlauthor{}{sch}
% \icmlauthor{Firstname8 Lastname8}{sch}
% \icmlauthor{Firstname8 Lastname8}{yyy,comp}
% %\icmlauthor{}{sch}
% %\icmlauthor{}{sch}
% \end{icmlauthorlist}

% \icmlaffiliation{yyy}{Department of XXX, University of YYY, Location, Country}
% \icmlaffiliation{comp}{Company Name, Location, Country}
% \icmlaffiliation{sch}{School of ZZZ, Institute of WWW, Location, Country}

% \icmlcorrespondingauthor{Firstname1 Lastname1}{first1.last1@xxx.edu}
% \icmlcorrespondingauthor{Firstname2 Lastname2}{first2.last2@www.uk}
% \icmlkeywords{Minimax optimization, Gradient descent-ascent, saddle point, cubic regularization, \L ojasiewicz gradient geometry}

% \vskip 0.3in
% ]

%\printAffiliationsAndNotice{\icmlEqualContribution}

%################################################
%used to make content
\doparttoc % Tell to minitoc to generate a toc for the parts
\faketableofcontents % Run a fake tableofcontents command for the partocs 
%\part{} 
%################################################

\begin{abstract}
Gradient descent-ascent (GDA) is a widely used algorithm for minimax optimization. However, GDA has been proved to converge to stationary points for nonconvex minimax optimization, which are suboptimal compared with local minimax points. In this work, we develop cubic regularization (CR) type algorithms that globally converge to local minimax points in nonconvex-strongly-concave minimax optimization. We first show that local minimax points are equivalent to second-order stationary points of a certain envelope function. Then, inspired by the classic cubic regularization algorithm, we propose an algorithm named Cubic-LocalMinimax for finding local minimax points, and provide a comprehensive convergence analysis by leveraging its intrinsic potential function. 
Specifically, we establish the global convergence of Cubic-LocalMinimax to a local minimax point at a sublinear convergence rate {and characterize its iteration complexity}. %We further analyze the asymptotic convergence rate of Cubic-LocalMinimax in the full spectrum of a local gradient dominant-type nonconvex geometry, establishing orderwise faster asymptotic convergence rates than the standard GDA. 
{Also, we propose a GDA-based solver for solving the cubic subproblem involved in Cubic-LocalMinimax up to certain pre-defined accuracy, and analyze the overall gradient and Hessian-vector product computation complexities of such an inexact Cubic-LocalMinimax algorithm.} Moreover, we propose a stochastic variant of Cubic-LocalMinimax for large-scale minimax optimization, and characterize its sample complexity under stochastic sub-sampling. Experimental results demonstrate faster convergence of our stochastic Cubic-LocalMinimax than \red{some existing algorithms}.
%standard stochastic GDA algorithm.}
\end{abstract}

\section{Introduction}
%\red{Consider Citing ``Minhui Huang, Kaiyi Ji, Shiqian Ma, and Lifeng Lai. Efficiently escaping saddle points in bilevel optimization. arXiv preprint arXiv:2202.03684, 2022b.''}

Minimax optimization (a.k.a.\! two-player sequential zero-sum games) is a popular modeling framework that has broad applications in modern machine learning, including game theory \cite{ferreira2012minimax}, generative adversarial networks \cite{goodfellow2014generative}, adversarial training \cite{Sinha2017CertifyingSD}, reinforcement learning \cite{qiu2020single,ho2016generative,MultiAgent_Jiaming}, etc. A standard minimax optimization problem is shown below, where $f$ is a smooth bivariate function.
\begin{align}
	\min_{x\in \mathbb{R}^m}\max_{y\in \mathbb{R}^n}~ f(x,y). \tag{P}
\end{align} 
In the existing literature, many optimization algorithms have been developed to solve different types of minimax problems. Among them, a simple and popular algorithm is the gradient descent-ascent (GDA), which alternates between a gradient descent update on $x$ and a gradient ascent update on $y$ in each iteration. Specifically, the global convergence of GDA has been established for minimax problems under various types of global geometries, such as convex-concave-type geometry ($f$ is convex in $x$ and concave in $y$) \cite{nedic2009subgradient,du2019linear,mokhtari2020unified,zhang2020suboptimality}, bi-linear geometry \cite{neumann1928theorie,robinson1951iterative} and Polyak-{\L}ojasiewicz geometry \cite{nouiehed2019solving,yang2020global}, yet these geometries are not satisfied by general nonconvex minimax problems in modern machine learning applications.  
Recently, many studies proved the convergence of GDA in nonconvex minimax optimization for both nonconvex-concave problems \cite{lin2019gradient,nouiehed2019solving,xu2023unified} and nonconvex-strongly-concave problems \cite{lin2019gradient,xu2023unified,chen2021proximal}.
In these studies, it has been shown that GDA converges sublinearly to a stationary point where the gradient of an envelope-type function $\Phi(x):=\max_y f(x,y)$ vanishes. 

Although GDA can find stationary points in nonconvex minimax optimization, the stationary points may include candidate solutions that are far more sub-optimal than global minimax points, \red{(e.g. saddle points of the envelope function $\Phi$) which are known to be the major challenge for training high-dimensional machine learning models \cite{Dauphin2014,Jin2017,Zhou2018}.} However, finding global minimax points is in general NP-hard \cite{jin2019local}. Recently, \cite{jin2019local} proposed a notion of local minimax point that is computationally tractable and is close to global minimax point %Specifically, a local minimax point $(x, y)$ is a stationary point that satisfies the following second-order non-degeneracy conditions 
\red{(see \Cref{def: local} for the formal definition)}. 
% \begin{align*}
%     \nabla_{22} f(x, y) \prec \zero, ~\big[\nabla_{11} f - \nabla_{12} f (\nabla_{22} f)^{-1} \nabla_{21} f\big](x,y) \succ \zero.
% \end{align*}

In the existing literature, several studies have proposed Newton-type GDA algorithms for finding such local minimax points. Specifically, \cite{Wang2020follow} proposed a Follow-the-Ridge (FR) algorithm, which is a variant of GDA that applies a second-order correction term to the gradient ascent update. In particular, the authors showed that any strictly stable fixed point of
FR is a local minimax point, and vice versa.  In another work \cite{zhang2021newtontype}, the authors proposed two Newton-type GDA algorithms that are proven to locally converge to a local minimax point at a linear and super-linear convergence rate, respectively. However, these second-order-type GDA algorithms only have asymptotic convergence guarantees that require initializing sufficiently close to a local minimax point, and they do not have any global convergence guarantees. 
Therefore, we are motivated to ask the following fundamental question.
 \begin{itemize}[leftmargin=*,topsep=0pt]
 	\item {\bf Q:} {\em Can we develop globally convergent algorithms that can efficiently find local minimax points in nonconvex minimax optimization? What are their convergence rates and complexities?}
 \end{itemize}

%Developing and analyzing such an algorithm is nontrivial due to the following reasons: 1) we need to have a good understanding and characterization of both the first-order and second-order information of nonconvex minimax problems; 2) we need to develop a computationally feasible and efficient GDA algorithm that can leverage the local curvature of the function to escape saddle points; 3) we aim to develop a unified analysis framework that can characterize the convergence rate of this algorithm under different types of nonconvex geometry of the minimax problem. 

In this work, we provide comprehensive answers to these questions. We develop deterministic and stochastic cubic regularization type algorithms that globally converge to local minimax points in nonconvex-strongly-concave minimax optimization, and study their convergence rates, computation complexities and sample complexities under standard assumptions. We summarize our contributions as follows.

%\vspace{1mm}
\subsection{Our Contributions}
%\vspace{1mm}
We consider the minimax optimization problem (P), where $f$ is twice-differentiable with Lipschitz continuous gradient and Hessian and is nonconvex-strongly-concave. In this setting, we first show that local minimax points of $f$ are equivalent to second-order stationary points of the
envelope function $\Phi(x):=\max_{y\in \mathbb{R}^n} f(x,y)$. Then, inspired by the classic cubic regularization algorithm, we propose an algorithm named Cubic-LocalMinimax to find local minimax points. The algorithm uses gradient ascent to update $y$, which is then used to estimate the gradient and Hessian involved in the cubic regularization update for $x$ (see \Cref{algo: cubic-minimax} for more details).

%Specifically, we propose Cubic-LocalMinimax -- a \red{Newton-type GDA} algorithm that leverages the classical cubic regularization technique to escape saddle points. Different from the standard cubic regularization algorithm that uses the Hessian information of the function, the Hessian of $\Phi(x)$ is not directly available in nonconvex minimax optimization, and hence we develop a rigorous and computationally feasible scheme in Cubic-LocalMinimax to estimate the Hessian. 

\textbf{Global convergence.} We show that Cubic-LocalMinimax admits an intrinsic potential function $H_t$ (see \Cref{prop: lyapunov_cubic}) that monotonically decreases over the iterations. Based on this property, we prove that every limit point of $\{x_t\}_t$ generated by Cubic-LocalMinimax is a local minimax point. Moreover, to achieve an $\epsilon$-accurate local minimax point, Cubic-LocalMinimax requires $\mathcal{O}\big({L_2}\kappa^{1.5}\epsilon^{-3}\big)$ number of cubic updates and $\widetilde{\mathcal{O}}\big({L_2}\kappa^{2.5}\epsilon^{-3}\big)$ number of gradient ascent updates, where $\kappa>1$ denotes the problem condition number.

\textbf{GDA-Cubic solver.} The updates of Cubic-LocalMinimax involve a cubic subproblem that has a very special Hessian structure. To solve this subproblem, we reformulate it as a minimax optimization problem, for which we develop a GDA-type solver (see Algorithm \ref{algo: Cubic_solve}). We name such a variant of Cubic-LocalMinimax as Inexact Cubic-LocalMinimax. By bounding the approximation error of the cubic solver carefully, we establish a monotonically decreasing potential function and establish the same iteration complexity as that of Cubic-LocalMinimax. Moreover, the total number of Hessian-vector product computations involved in the cubic solver is of the order $\widetilde{O}(L_1\kappa^2\epsilon^{-4})$. 

%to achieve an $\epsilon$-accurate local minimax point, Inexact Cubic-LocalMinimax requires the same numbers of cubic updates and gradient ascent updates as those for Cubic-LocalMinimax, and $\widetilde{O}(L_1\kappa^2\epsilon^{-4})$ Hessian-vector product computations in the cubic solvers. 

%\textbf{Asymptotic convergence.} By leveraging the potential function, we also characterize the asymptotic convergence rates of Cubic-LocalMinimax under a class of {\L}ojasiewicz-type local gradient geometry. Specifically, depending on the sharpness parameter of the local geometry, the asymptotic convergence rate of Cubic-LocalMinimax ranges from sub-linear convergence to super-linear convergence. These asymptotic convergence rates are orderwise faster than those of the standard GDA established in \cite{chen2021proximal} under the same type of local geometry. 
%\footnote{Note that the geometry parameter $\theta$ in this paper corresponds to $\frac{1}{1-\theta}$ in \cite{chen2021proximal}.}. 

\textbf{Sample complexity.} We further develop a stochastic variant of Cubic-LocalMinimax {named as Stochastic Cubic-LocalMinimax}, which applies stochastic sub-sampling to improve the sample complexity in large-scale minimax optimization. In particular, we adopt time-varying batch sizes in a way such that the induced gradient inexactness and Hessian inexactness are adapted to the optimization increment $\|x_t - x_{t-1}\|$ in the previous iteration. Consequently, to achieve an $\epsilon$-accurate local minimax point, we show that stochastic Cubic-LocalMinimax requires querying $\widetilde{\mathcal{O}}(\kappa^{3.5}\epsilon^{-7})$ number of gradient samples and $\widetilde{\mathcal{O}}(\kappa^{2.5}\epsilon^{-5})$ number of Hessian samples.

%\vspace{-2mm}
% \begin{table}[ht]
% 	\caption{\red{Move to KL section.} Comparison of potential function value gap $H(z) - H^*$ convergence rates of Cubic-LocalMinimax and GDA under different parameterizations of {\L}ojasiewicz gradient geometry.}\label{table: 1}
% 	\center
% 	\vspace{0mm}
% 	\begin{tabular}{ccc}
% 		\toprule
% 		{Geometry parameter} & {GDA \cite{chen2021proximal}} & {\bf Cubic-LocalMinimax (This paper)} \\ \midrule
% 		$\theta\in(2,+\infty)$   &\makecell{ Super-linear convergence} & \makecell{Super-linear convergence}   \\
% 		\midrule
% 		$\theta=2$  & \makecell{Linear convergence} & \makecell{ Super-linear convergence}  \\
% 		\midrule
% 		$\theta \in (\frac{3}{2}, 2)$  & \makecell{Sub-linear convergence} & \makecell{Super-linear convergence}  \\
% 		\midrule
% 		$\theta = \frac{3}{2}$  & \makecell{Sub-linear convergence}& \makecell{Linear convergence} \\
% 		\midrule
% 		$\theta \in (1,\frac{3}{2})$  & \makecell{Sub-linear convergence}& \makecell{Sub-linear convergence} \\
% 		\bottomrule
% 	\end{tabular}
% \vspace{-2mm}
% \end{table}

\begin{table}[H]
\caption{\red{Second-order algorithms that find local minimax point of nonconvex-strongly-concave optimization.}}
\vspace{-4mm}
\begin{center}
\begin{threeparttable}\label{table:order2}
\red{\begin{tabular}{c c c c c}
\toprule Reference & Algorithm & Setting & \#Gradients & \#Hv products\tnote{1} \\ \midrule
 \cite{luo2022finding} & Inexact MCN & Deterministic &  $\widetilde{\mathcal{O}} \big(\kappa^2\epsilon^{-1.5}\big)$ & $\mathcal{O}(\kappa^{1.5}\epsilon^{-2})$  \\ \hline
\rule{0pt}{11pt} Our Theorem \ref{thm: inexact} & Algorithm \ref{algo: cubic-minimax-inexact} & Deterministic & $\widetilde{\mathcal{O}} \big(\kappa^{2.5}\epsilon^{-1.5}\big)$ & $\widetilde{\mathcal{O}}(\kappa^2\epsilon^{-2})$  \\ \cline{2-5}
 \rule{0pt}{11pt} & Nesterov-accelerated Algorithm \ref{algo: cubic-minimax-inexact}%\tnote{2} 
 & Deterministic & $\widetilde{\mathcal{O}} \big(\kappa^2\epsilon^{-1.5}\big)$ & $\widetilde{\mathcal{O}}(\kappa^{1.5}\epsilon^{-2})$ \\ \hline
 \rule{0pt}{11pt} Our Theorem \ref{thm: 5} & Algorithm \ref{algo: cubic-minimax-stoc} & Stochastic & $\widetilde{\mathcal{O}}\big(\kappa^{3.5}\epsilon^{-3.5}\big)$ & $\widetilde{\mathcal{O}}\big(\kappa^{2.5}\epsilon^{-2.5}\big)$\\ \bottomrule
\end{tabular}}
\begin{tablenotes}
\red{\item[1] The required number of Hessian-vector product evaluations.
%\item[2] Apply Nesterov's acceleration to the gradient ascent steps of our Algorithm \ref{algo: cubic-minimax-inexact}.
}
\end{tablenotes}
\end{threeparttable}
\end{center}
\vspace{-2mm}
\end{table}

\begin{table}[htp]
\caption{\red{Comparison of first-order algorithms for minimax optimization.}}
\vspace{-4mm}
\begin{center}
\begin{threeparttable}\label{table:order1}
\red{\begin{tabular}{c c c c c}
\toprule Reference & Algorithm & Setting\tnote{1} & Metric\tnote{2} & \#Gradients\tnote{3} \\ \hline
\cite{nouiehed2019solving} & Multi-step GDA & 0/N/PL & $\epsilon$-Nash & $\mathcal{O}\big(\epsilon^{-2}\ln(\epsilon^{-1})\big)$ \\ \hline
\cite{nouiehed2019solving} &  Multi-step GDA & 0/N/C & $\epsilon$-Nash & $\mathcal{O}\big(\epsilon^{-3.5}\ln(\epsilon^{-1})\big)$ \\ \hline \hline
%Theorem 4.1 of & Primal-dual & &&\\ 
%\cite{du2019linear} & SVRG & S/C/C & GOPD & $\mathcal{O}\big(N\ln(\epsilon^{-1})\big)$\\ \hline
\cite{lin2019gradient}  & GDA & 0/N/$\mu$ & $\epsilon$-stationary point & $\mathcal{O}(\kappa^2\epsilon^{-2})$ \\ \hline
\cite{xu2023unified} & AGP & 0/N/$\mu$ & $\epsilon$-stationary point & $\mathcal{O}(\kappa^5\epsilon^{-2})$\\  \hline \hline
\cite{lin2019gradient} & GDA & 0/N/C & $\epsilon$-stationary point & $\mathcal{O}(\epsilon^{-6})$ \\ \hline
\cite{xu2023unified} & AGP & 0/N/C & $\epsilon$-stationary point & $\mathcal{O}(\epsilon^{-4})$ \\ \hline \hline
\cite{lin2019gradient} & GDA & $\sigma$/N/$\mu$ & $\epsilon$-stationary point & $\mathcal{O}(\kappa^3\epsilon^{-4})$ \\ \hline
\cite{lin2019gradient} & GDA & $\sigma$/N/$\mu$ & $\epsilon$-stationary point & $\mathcal{O}(\epsilon^{-8})$ \\ \hline
\cite{xu2020enhanced}&SREDA&$\sigma$/N/$\mu$&$\epsilon$-stationary point&$\mathcal{O}(\kappa^3\epsilon^{-3})$\\ \hline
\cite{qiu2020single}& VR-STS%\tnote{4}
&$\sigma$/N/$\mu$&$\epsilon$-stationary point&$\mathcal{O}(\epsilon^{-3})$\\ \hline \hline
\cite{yang2020global} & AGDA & 0/PL/PL & $\epsilon$-function value gap & $\mathcal{O}\big(\kappa^3\ln(\epsilon^{-1})\big)$ \\ \hline
\cite{yang2020global} & VR-AGDA & $\infty$/PL/PL\tnote{5} & $\epsilon$-function value gap & $\mathcal{O}\big(N\!+\!\kappa^9\!\ln(\epsilon^{-1})\!\big)$  \\ \hline \hline
\cite{mokhtari2020unified} & OGDA \& EG & 0/$\mu$/$\mu$ & $\epsilon$-close to optimal point & $\mathcal{O}\big(\kappa\ln(\epsilon^{-1})\big)$ \\ \hline
\cite{zhang2020suboptimality} & OGDA & 0/$\mu$/$\mu$ & $\epsilon$-close to optimal point & $\mathcal{O}\big(\kappa^{1.5}\ln(\epsilon^{-1})\big)$ \\  \bottomrule
\end{tabular}}
\begin{tablenotes}
\red{\item[1] The setting is denoted by V/X/Y where\\ 
$\cdot$ $V$ denotes upper bound on the variance of stochastic gradient $\mathbb{E}\|\nabla f_{\xi}(x,y)-\nabla f(x,y)\|^2$\\
($V=0$: deterministic; $V=\sigma$: variance bounded by constant $\sigma^2$; $V=\infty$: no variance bound assumption); \\
$\cdot$ $X$ denotes geometry of $f(\cdot,y)$
($X=$N: Nonconvex; $X=$PL: Polyak {\L}ojasiewicz; $X=\mu$: $\mu$-strongly convex);\\
$\cdot$ $Y$ denotes the geometry of $f(x,\cdot)$ ($Y=$C: concave; $Y=$PL: Polyak {\L}ojasiewicz; $Y=\mu$: $\mu$-strongly concave).\\ 
\item[2] ``Metric'' denotes the point to which the algorithm will converge, with the following choices: \\
$\cdot$ $\epsilon$-Nash: $\|\nabla_x f(x,y)\|\le \epsilon$, $\|\nabla_y f(x,y)\|\le \epsilon$. \\
$\cdot$ $\epsilon$-stationary point: $\|\nabla \Phi(x)\|\le \epsilon$ where $\Phi(x):=\max_{y\in \mathbb{R}^n} f(x,y)$.\\
$\cdot$ $\epsilon$-function value gap: $\Phi(x)-\min_{x'\in\mathbb{R}^m}\Phi(x')\le \epsilon$.\\
$\cdot$ $\epsilon$-close to optimal point: $\|(x-x^*,y-y^*)\|\le\epsilon$ where $(x^*,y^*)$ is a global optimal point. \\
\item[3] ``\# Gradients'' denotes the required number of gradient evaluations.\\
%\item[4] Variance-Reduced Signle-Timescale Stochastic algorithm (see Algorithm 2 of \cite{qiu2020single}).
%\item[5] VR-AGDA has no assumption on the variance of stochastic gradient $\mathbb{E}\|\nabla f_{\xi}(x,y)-\nabla f(x,y)\|^2$ as SVRG variance reduction is used, so we marked $\infty$ in the setting.
}
\end{tablenotes}
\end{threeparttable}
\end{center}
\vspace{-2mm}
\end{table}

\begin{table}[H]
\caption{\red{Zeroth-order algorithms that find $\epsilon$-stationary point of nonconvex-strongly-concave optimization.}}
\vspace{-4mm}
\begin{center}
\begin{threeparttable}\label{table:order0}
\red{\begin{tabular}{c c c c}
\toprule Reference & Algorithm & Variance Assumptions & \#Function evaluations \\ \midrule
\cite{xu2020gradient}&ZO-VRGDA&$\mathbb{E}\|\nabla f_{\xi}(x,y)-\nabla f(x,y)\|^2\le\sigma$& $\mathcal{O}(\kappa^3\epsilon^{-3})$ \\ \hline
\cite{xu2020enhanced}&ZO-SREDA&$\mathbb{E}\|\nabla f_{\xi}(x,y)-\nabla f(x,y)\|^2\le\sigma$&$\mathcal{O}(\kappa^3\epsilon^{-3})$\\ \hline
\rule{0pt}{11pt}\cite{huang2022accelerated} & Acc-ZOMDA & $\mathbb{E}\|\nabla f_{\xi}(x,y)-\nabla f(x,y)\|^2\le\sigma$ & $\widetilde{\mathcal{O}}(\kappa^{4.5}\epsilon^{-3})$\\
&&$\mathbb{E}\|\widehat{\nabla} f_{\xi}(x,y)-\nabla f(x,y)\|^2\le\delta$\tnote{1}&\\
\bottomrule
\end{tabular}}
\red{\begin{tablenotes}
\item[1] $\widehat{\nabla} f_{\xi}(x,y)$ is the approximated zero-th order estimator of the smoothed gradient.
%$\widehat{\nabla} f_{\xi}(x,y)\approx\nabla f_{\mu}(x,y):=[\nabla_x \mathbb{E}_{\hat{u}} f_{\xi}(x+\mu_1 \hat{u},y); \nabla_y \mathbb{E}_{\tilde{u}}f_{\xi}(x,y+\mu_2\tilde{u})]$ is zero-th order smoothed gradient estimator where $\hat{u}, \tilde{u}$ are uniformly sampled from the unit sphere.
\end{tablenotes}}
\end{threeparttable}
\end{center}
\end{table}

\subsection{Other Related Works}\label{sec: related_work}%\vspace{-5mm}
\textbf{Deterministic \red{first-order} GDA algorithms:}
% \red{Many studies characterized the convergence of GDA in nonconvex minimax optimization. Specifically,
%  \cite{lin2019gradient,nouiehed2019solving,xu2023unified} studied the convergence of GDA in the nonconvex-concave setting whereas \cite{lin2019gradient,xu2023unified} focused on the nonconvex-strongly-concave setting. 
%  In these general nonconvex settings, it is shown that GDA converges to a certain stationary point at a sublinear rate. Recently, \cite{chen2021proximal}
%  proved the parameter convergence of proximal-GDA in regularized nonconvex-strongly-concave optimization under the Kurdyka-{\L}ojasiewicz geometry. The convergence rates obtained there are orderwise slower than that of Cubic-LocalMinimax.}
\cite{yang2020global} studied an alternating gradient descent-ascent (AGDA) algorithm in which the gradient ascent step uses the current variable $x_{t+1}$ instead of $x_t$. \cite{xu2023unified} studied an alternating gradient projection algorithm which applies $\ell_2$ regularizer to the local objective function of GDA followed by projection onto the constraint sets. \cite{daskalakis2018limit,mokhtari2020unified,zhang2020suboptimality} analyzed optimistic gradient descent-ascent (OGDA). %which applies negative momentum to accelerate GDA.
\cite{mokhtari2020unified} also studied an extra-gradient (EG) algorithm which applies two-step GDA in each iteration. \cite{nouiehed2019solving} studied multi-step GDA where multiple gradient ascent steps are performed, and they also studied the momentum-accelerated version.  \cite{cherukuri2017saddle,daskalakis2018limit,jin2019local} studied GDA in continuous time dynamics using differential equations. \red{All the above works study first-order algorithms for minimax optimization that either only converge to only stationary points or have restrictive geometric requirements on $f(\cdot,y)$ such as strong convexity, convexity or Polyak {\L}ojasiewicz (PL) geometry, as shown in \Cref{table:order1} above.}

\textbf{Deterministic \red{second-order} algorithms for minimax optimization: } \cite{adolphs2019local} analyzed \red{the stability of} a second-order variant of the GDA algorithm. \red{\cite{huang2022cubic} also proposed a cubic regularized Newton algorithm which solves convex-concave minimax optimization problem with $\mathcal{O}\big(\ln(\epsilon^{-1})\big)$ and $\mathcal{O}\big(\epsilon^{(1-\theta)/(\theta^2)}\big)$ iterations respectively under Lipschitz-type and H{\"o}lder-type (with parameter $\theta$) error bound conditions, but the computation complexity for solving the cubic subproblem is lacking.} In a concurrent work \cite{luo2022finding}, the authors proposed a Minimax Cubic-Newton (MCN) algorithm that is different from our Cubic-LocalMinimax in various aspects, as we elaborate with more details in Sections \ref{sec:analysis1} and \ref{sec:CRsolver}. \red{We list three major differences as follows. First, we develop a GDA-based solver for solving the cubic subproblem which does not require cubic objective evaluation, whereas they use a gradient-based solver with Chebyshev polynomials which requires additional computation. Second, our algorithm requires much fewer gradient ascent steps in practice since we adopt more adaptive approximation error bounds (see eqs. \eqref{eq: grad_goal} \& \eqref{eq: Hess_goal}). Third, we develop a stochastic version of Cubic-LocalMinimax and analyze its sample complexity, which to our knowledge has not been studied in the existing literature. More detailed comparison of the convergence results between \cite{luo2022finding} and our work are shown in \Cref{table:order2} above.}

\textbf{Stochastic GDA algorithms: } \cite{lin2019gradient,yang2020global} analyzed stochastic GDA and stochastic AGDA, respectively, which are direct extensions of GDA and AGDA to the stochastic setting. Variance reduction techniques have been applied to stochastic minimax optimization, including SVRG-based \cite{du2019linear,yang2020global}, SPIDER-based \cite{xu2020gradient}, SREDA \cite{xu2020enhanced}, STORM \cite{qiu2020single} and its gradient free version \cite{huang2022accelerated}. \cite{xielower} studied the complexity lower bound of first-order stochastic algorithms for finite-sum minimax problem. \red{These first-order and zeroth-order stocahstic algorithms are respectively shown in Table \ref{table:order1} and Table \ref{table:order0} above. }

\textbf{Cubic regularization (CR):} {The CR algorithm dates back to \cite{Griewank}, where global convergence is established.} In \cite{Nesterov2006}, the author analyzed the convergence rate of CR to second-order stationary points in nonconvex optimization. In \cite{Nesterov2008}, the authors established the sub-linear convergence of CR for solving convex smooth problems, and they further proposed an accelerated version of CR with improved sub-linear convergence. \cite{Yue2018} studied the asymptotic convergence properties of CR under the error bound condition, and established the quadratic convergence of the iterates. {Recently, \cite{Hallak2020} proposed a framework of two-directional method for finding second-order stationary points in general smooth nonconvex optimization. The main idea is to search for a feasible direction toward the solution and is not based on cubic regularization.} Several other works proposed different methods to solve the cubic subproblem of CR, e.g., \cite{Agarwal2017,carmon2019gradient,Cartis2011a}. Another line of work aimed at improving the computation efficiency of CR by solving the cubic subproblem with inexact gradient and Hessian information. In particular, \cite{Saeed2017} proposed an inexact CR for solving convex problem. Also, \cite{Cartis2011b} proposed an adaptive inexact CR for nonconvex optimization, whereas \cite{Jiang2017} further studied the accelerated version for convex optimization. Several studies explored subsampling schemes to implement inexact CR algorithms, e.g., \cite{kohler2017,Xu2017,Zhou2018,Wang2018}. 

\section{Problem Formulation and Preliminaries}\label{sec: pre}
We consider the following standard minimax optimization problem (P), where $f$ is a nonconvex-strongly-concave bivariate function and is twice-differentiable. Throughout the paper, we define the envelope function $\Phi(x):=\max_{y\in \mathbb{R}^n} f(x,y)$.
\begin{align}
	\min_{x\in \mathbb{R}^m}\max_{y\in \mathbb{R}^n}~ f(x,y). \tag{P}
\end{align} 
Our goal is to develop algorithms that converge to a local minimax point of (P), which is defined as follows. 

%\red{(add the original definition of local minimax below)}. 

%\red{Add definition here....}
\begin{definition}[Local minimax point]\label{def: local} 
\red{\cite{jin2019local} A point $(x^*, y^*)$ is a local minimax point of (P) if $y^*$ is a local maximum of function $f(x^*,\cdot)$, and there exists a constant $\delta_0>0$ such that $x^*$ is a local minimum of function $g_{\delta}(x):=\max_{y:\|y-y^*\|\le\delta}f(x, y)$ for any $\delta\in(0,\delta_0]$.}
\end{definition}

% \begin{definition}[Local minimax point]\label{def: local}
%  A point $(x^*, y^*)$ is a local minimax point of (P) if there exists $\delta_0>0$ and a function $h$ that satisfies $\lim_{\delta\to0+}h(\delta)=0$ such that, for any $\delta\in(0,\delta_0]$ and any $x,y$ that satisfies $\|x-x^*\|\le\delta$ and $\|y-y^*\|\le\delta$, we have %\red{(Yi: $x,y$ should be $x^*,y^*$?)} Done.
% \begin{align}\label{localminimax}
%     f(x^*,y)\le f(x^*,y^*)\le \max_{y':\|y'-y^*\|\le h(\delta)}f(x,y').
% \end{align}
% \end{definition}

Local minimax points are different from global minimax points, which require $x^*$ and $y^*$ to be the global minimizer and global maximizer of the functions $\Phi(\cdot)$ and $f(x, \cdot)$ simultaneously. In general minimax optimization, it has been shown that global minimax points can be neither local minimax points nor even stationary points \cite{jin2019local}.  
However, global minimax point necessarily implies local minimax point in nonconvex-strongly-concave optimization. Moreover, under mild conditions, many machine learning problems have been shown to possess local minimax points, e.g., generative adversarial networks (GANs) \cite{Nagarajan17,zhang2021newtontype}, distributional robust machine learning \cite{sinha2017certifiable}, etc. 

\red{Local minimax point is also an extension (necessary condition) of local Nash equilibrium \cite{jin2019local}. A point $(x^*,y^*)$ is defined as local Nash equilibrium if $f(x^*,y)\le f(x^*,y^*)\le f(x,y^*)$ for any $x,y$ that satisfies $\|x-x^*\|\le\delta$ and $\|y-y^*\|\le\delta$. In other words, local zero-duality gap $\min_{x:\|x-x^*\|\le\delta}\max_{y:\|y-y^*\|\le\delta}f(x,y)=\max_{y:\|y-y^*\|\le\delta}\min_{x:\|x-x^*\|\le\delta}f(x,y)$ is achieved at $(x^*,y^*)$. Such zero-duality gap condition is required for simultaneous games where the min-player $x$ and the max-player $y$ act simultaneously \cite{jin2019local}. In fact, \cite{jin2019local} pointed out that most machine learning applications are sequential games where min-player and max-player act sequentially and the sequence (i.e., who plays first) is crucial and pre-specified. For example, in adversarial training, classifier acts first and then the adversary generates an adversarial sample. In GAN training, generator acts first followed by discriminator. Moreover, in sequential games, zero duality gap does not necessarily hold, i.e., local Nash equilibrium may not exist, so local minimax solution is more commonly used \cite{jin2019local}.}

{In \cite{jin2019local}, the following set of second-order conditions have been proved to be sufficient conditions for local minimax points. Moreover, as we show later, our algorithm design is inspired by these conditions.} 
\begin{definition}[Sufficient conditions for local minimax]\label{def: suff_local}
A point $(x, y)$ is a local minimax point of (P) if the following conditions hold.
\begin{enumerate}[leftmargin=*,topsep=0mm,itemsep=1mm]
    \item Stationary: $\nabla_1 f(x,y)=\zero,~ \nabla_2 f(x,y)=\zero$;
    \item Non-degeneracy: $\nabla_{22} f(x, y) \prec \zero$, and  $\big[\nabla_{11} f - \nabla_{12} f (\nabla_{22} f)^{-1} \nabla_{21} f\big](x,y) \succ \zero$.
\end{enumerate}
\end{definition}

Throughout the paper, we adopt the following standard assumptions on the minimax optimization problem (P). These conditions have been widely adopted in the related works \cite{lin2019gradient,jin2019local,zhang2021newtontype}.
\begin{assum}\label{assum: fcubic}
	The minimax problem $\mathrm{(P)}$ satisfies:
	\begin{enumerate}[leftmargin=*,topsep=0mm,itemsep=1mm]
		\item Function $f(\cdot,\cdot)$ is $L_1$-smooth and function $f(x,\cdot)$ is $\mu$-strongly concave for any fixed $x$;
		\item The Hessian mappings $\nabla_{11} f$, $\nabla_{12} f$, $\nabla_{21} f$, $\nabla_{22} f$ are $L_2$-Lipschitz continuous;
		\item Function $\Phi(x):=\max_{y\in \mathbb{R}^n} f(x,y)$ is bounded below and has bounded sub-level sets.
	\end{enumerate}
\end{assum} 

To elaborate, {item 1 considers the class of nonconvex-strongly-concave functions $f$ that has been widely studied in the minimax optimization literature \cite{lin2019gradient,jin2019local,xu2023unified,lu2020hybrid}}, and it is also satisfied by many machine learning applications. Item 2 assumes that the block Hessian matrices of $f$ are Lipschitz continuous, which is a standard assumption for analyzing second-order optimization algorithms \cite{Nesterov2006,Agarwal2017}. Moreover, item 3 {guarantees that the minimax problem has at least one solution.} 

% Under \Cref{assum: fcubic}, the following properties regarding the gradient of the minimax problem (P) have been proved in the literature. Throughout, we denote $\kappa=L_1/\mu$ as the condition number. 
% \begin{restatable}{proposition}{prop1}{\cite{lin2019gradient}}\label{prop_1}
% 	Let \Cref{assum: fcubic} hold. Then, the mapping $y^*(x):=\arg\max _{y\in \mathbb{R}^n}f(x,y)$ is unique for every fixed $x$. 
% 	Moreover, it holds that
% 	\begin{enumerate}[leftmargin=*,topsep=0mm,itemsep=1mm]
% 		\item Mapping $y^*(x)$ is $\kappa$-Lipschitz continuous; \label{item: ystar_lipschitz}
% 		\item Function $\Phi(x)$ is $L_1(1+\kappa)$-smooth and $\nabla\Phi(x)=\nabla_1 f(x,y^*(x))$. \label{item: Phi_Lsmooth}
% 	\end{enumerate}
% \end{restatable}

\section{A Cubic Regularization Approach for Finding Local Minimax Points}

In this section, we propose a cubic regularization type algorithm that leverages the cubic regularization technique to find local minimax points of the nonconvex minimax problem (P). We first relate local minimax points to certain second-order stationary points
in \Cref{subsec: 3.1}, based on which we further develop the algorithm in \Cref{subsec: 3.2}.

\subsection{On Local Minimax and Second-Order Stationary \red{Condition}}\label{subsec: 3.1}

Regarding the conditions of local minimax points listed in \Cref{def: suff_local}, note that the stationary conditions in item 1 are easy to achieve, e.g., by performing standard gradient updates. For the non-degeneracy conditions listed in item 2, the first condition is guaranteed as $f(x,\cdot)$ is strongly concave. Therefore, the major challenge is to achieve the other non-degeneracy condition $\big[\nabla_{11} f - \nabla_{12} f (\nabla_{22} f)^{-1} \nabla_{21} f\big](x,y) \succ \zero$. Interestingly, in nonconvex-strongly-concave minimax optimization, such a 
non-degeneracy condition has close connections to a certain second-order stationary condition on the envelope function $\Phi(x)$, as formally stated in the following proposition. \red{Throughout, we denote $\kappa=L_1/\mu$ as the condition number. }

\begin{restatable}{proposition}{propphy}\label{prop_Phiystar2}
	Let \Cref{assum: fcubic} hold. Then, the following statements hold. 
	\begin{enumerate}[leftmargin=*,topsep=0mm,itemsep=1mm]
        \item \red{The mapping  $y^*(x):=\arg\max _{y\in \mathbb{R}^n}f(x,y)$ is unique and $\kappa$-Lipschitz continuous for every fixed $x$ \cite{lin2019gradient};} \label{item: ystar_lipschitz}
        \item \red{$\Phi(x)$ is $L_1(1+\kappa)$-smooth and $\nabla\Phi(x)=\nabla_1 f(x,y^*(x))$ \cite{lin2019gradient};} \label{item: Phi_Lsmooth}
		\item Define mapping $G(x,y)=\big[\nabla_{11} f - \nabla_{12} f (\nabla_{22} f)^{-1} \nabla_{21} f\big](x,y)$. Then, $G$ is a Lipschitz continuous mapping with Lipschitz constant $L_G=L_2(1+\kappa)^2$; \label{item: G_lipschitz}
		\item The Hessian of $\Phi$ satisfies $\nabla^2 \Phi(x)=G(x,y^*(x))$, and it is Lipschitz continuous with Lipschitz constant $L_{\Phi}=L_G(1+\kappa)=L_2(1+\kappa)^3$. \label{item: ddPhi}
	\end{enumerate}
\end{restatable}

The above proposition points out that the non-degeneracy condition $G(x,y) \succ \zero$ actually corresponds to a second order stationary condition of the envelop function $\Phi(x)$. To explain more specifically, consider a pair of points $(x, y^*(x))$, in which $y^*(x):= \arg\max_y f(x, y)$. Since $f(x, \cdot)$ is strongly concave and $y^*(x)$ is the maximizer, we know that $y^*(x)$ must satisfy the stationary condition $\nabla_2 f(x, y^*(x)) = \zero$ and the non-degeneracy condition $\nabla_{22} f(x, y^*(x)) \prec \zero$. Therefore, in order to be a local minimax point, $x$ must satisfy the stationary condition $\nabla_1 f(x, y^*(x)) = \zero$ and the non-degeneracy condition $G(x,y^*(x)) \succ \zero$, which, by \red{items \ref{item: Phi_Lsmooth} and \ref{item: ddPhi} of \Cref{prop_Phiystar2}}, are equivalent to the set of second-order stationary conditions stated in the following fact.
\begin{fact}\label{coro: 1}\red{\cite{evtushenko1974some}}
Let \Cref{assum: fcubic} hold. Then, $(x, y^*(x))$ is a local minimax point of (P) if $x$ satisfies the following set of second-order stationary conditions.
\begin{align*}
\text{(Second-order stationary):} \quad \nabla \Phi(x) = \zero, \quad \nabla^2 \Phi(x) \succ \zero.
\end{align*}
\end{fact}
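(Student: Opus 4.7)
The plan is to reduce the four conditions in \Cref{def: local} evaluated at $(x, y^*(x))$ to the two scalar conditions on $\Phi$, using \Cref{prop_1} and \Cref{prop_Phiystar} as the translation dictionary. The proof is a direct ``if and only if'' verification, conducted pointwise so that both directions come out together.

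First, I would observe that two of the four conditions in \Cref{def: local} are automatic at $y=y^*(x)$ under \Cref{assum: f}. Since $f(x,\cdot)$ is $\mu$-strongly concave and $y^*(x)=\argmax_{y} f(x,y)$ is the unique maximizer (by \Cref{prop_1}), first-order optimality gives $\nabla_2 f(x, y^*(x))=\zero$, and strong concavity gives $\nabla_{22} f(x, y^*(x))\preceq -\mu I \prec \zero$. Therefore the local minimax conditions at $(x,y^*(x))$ collapse to the two remaining requirements
\[
\nabla_1 f(x,y^*(x))=\zero \quad \text{and} \quad G(x,y^*(x))\succ \zero.
\]

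Next, I would invoke the Danskin-type identity from item \ref{item: Phi_Lsmooth} of \Cref{prop_1}, namely $\nabla \Phi(x)=\nabla_1 f(x,y^*(x))$, to rewrite the first requirement as $\nabla \Phi(x)=\zero$. Then I would invoke item \ref{item: ddPhi} of \Cref{prop_Phiystar}, namely $\nabla^2 \Phi(x)=G(x,y^*(x))$, to rewrite the second requirement as $\nabla^2 \Phi(x)\succ \zero$. Concatenating these two equivalences gives the claimed characterization.

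There is essentially no obstacle to overcome; all the analytic content has already been packaged into \Cref{prop_1} and \Cref{prop_Phiystar}, and \Cref{coro: 1} is a bookkeeping corollary of their item 2's together with the strong concavity of $f(x,\cdot)$. The only subtlety worth a sentence in the write-up is to point out that the equivalence is asserted for the specific pairing $(x,y^*(x))$ rather than for an arbitrary $(x,y)$, which is legitimate because the uniqueness of $y^*(x)$ guarantees that once $x$ is fixed the second coordinate is determined and the two automatic conditions above indeed hold.
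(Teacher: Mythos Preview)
Your proposal is correct and follows essentially the same argument the paper gives in the paragraph preceding \Cref{coro: 1}: two of the four conditions in \Cref{def: local} hold automatically at $(x,y^*(x))$ by strong concavity, and the remaining two are identified with $\nabla\Phi(x)=\zero$ and $\nabla^2\Phi(x)\succ\zero$ via item~\ref{item: Phi_Lsmooth} of \Cref{prop_1} and item~\ref{item: ddPhi} of \Cref{prop_Phiystar}.
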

To summarize, to find a local minimax point in nonconvex-strongly-concave minimax optimization, it suffices to find a second-order stationary point of the smooth nonconvex envelope function $\Phi(x)$. Such a key observation is the basis for developing our proposed algorithm in the next subsection.
We also note that the proof of \Cref{prop_Phiystar2} is not trivial. Specifically, we need to first develop bounds for the spectrum norm of the block Hessian matrices in Lemma \ref{lemma_HessBound} (see the first page of the appendix), which helps prove the Lipschitz continuity of the $G$ mapping in item 3. Moreover, we leverage the optimality condition of $f(x,\cdot)$ to derive an expression for the maximizer mapping $y^*(x)$ (see eq. \eqref{eq: ystar_grad} in the appendix), which is used to further prove item 4.

\subsection{The Cubic-LocalMinimax Algorithm}\label{subsec: 3.2}

%The standard gradient-descent-ascent (GDA) algorithm can only find stationary points, i.e., $\nabla \Phi(x) = \zero$, in nonconvex-strongly-concave minimax optimization \cite{lin2019gradient}. Such a type of convergence guarantee does not rule out the possibility that GDA may get stuck at suboptimal saddle points of the envelope function $\Phi$, which are known to be the major challenge for training high-dimensional machine learning models \cite{Dauphin2014,Jin2017,Zhou2018}. Therefore, we are motivated to escape the saddle points and target at finding second-order stationary points. Importantly, the previous \Cref{coro: 1} shows that these second-order stationary points of $\Phi$ are guaranteed to be local minimax points in nonconvex-strongly-concave minimax optimization. 

%On the other hand, while numerous algorithms have been developed for escaping saddle points in conventional nonconvex optimization, e.g., first-order algorithms \cite{Ge2015,Jin2017,carmon2019gradient,Liu2017} and second-order algorithms \cite{Nesterov2006,Agarwal2017,Yue2018,zhou2018convergence}, such a type of algorithm has not been developed for escaping saddle points in nonconvex minimax optimization. 

In the existing literature, many second-order optimization algorithms have been developed for finding second-order stationary points of nonconvex minimization problems  \cite{Nesterov2006,Agarwal2017,Yue2018,zhou2018convergence}. Hence, one may want to apply these algorithms to minimize the nonconvex function $\Phi(x)$ and find local minimax points of the minimax problem (P).
%This includes first-order algorithms \cite{Ge2015,Jin2017,carmon2019gradient,Liu2017} 
However, these algorithms are not directly applicable, as the function $\Phi(x)$ involves a special {\em maximization} structure and hence its specific function form $\Phi$ as well as the gradient $\nabla \Phi$ and Hessian $\nabla^2 \Phi$ are implicit. Instead, our algorithm design can only leverage information of the bi-variate function $f$.

Our algorithm design is inspired by the classic cubic regularization algorithm \cite{Nesterov2006}. Specifically, to find a second-order stationary point of the envelope function $\Phi(x)$, the conventional cubic regularization algorithm would perform the following iterative update.
\begin{align}
    &s_{t+1} \in \mathop{\arg\min}_s \nabla \Phi(x_t)^{\top}s + \frac{1}{2} s^{\top}\nabla^2 \Phi(x_t) s+\frac{1}{6\eta_x}\|s\|^3, \nonumber\\
    &x_{t+1}=x_t+s_{t+1}, \label{eq: cubic}
\end{align}
where $\eta_x>0$ is a proper learning rate. However, due to the special {maximization} structure of $\Phi$, its gradient and Hessian have complex formulas (see \red{Proposition \ref{prop_Phiystar2}}) that involve the mapping $y^*(x)$, which cannot be computed exactly in practice. Hence, we aim to develop an algorithm that efficiently computes approximations of $\nabla\Phi(x), \nabla^2\Phi(x)$, and use them to perform the cubic regularization update.

To perform the cubic regularization update in \cref{eq: cubic}, we need to compute $\nabla\Phi(x_t)=\nabla_1 f(x_t,y^*(x_t))$ and $\nabla^2 \Phi(x_t)=G(x_t,y^*(x_t))$ (by \Cref{prop_Phiystar2}), both of which depend on the maximizer $y^*(x_t)$ of the function $f(x_t,\cdot)$. Since $f(x_t,\cdot)$ is strongly-concave, we can run $N_t$ iterations of gradient ascent to obtain an approximated maximizer $\widetilde{y}_{N_t} \approx y^*(x_t)$, and then approximate $\nabla\Phi(x_t), \nabla^2 \Phi(x_t)$ using $\nabla_1 f(x_t,\widetilde{y}_{N_t})$ and $G(x_t,\widetilde{y}_{N_t})$, respectively. Intuitively, these are good approximations due to two reasons: (i) $\widetilde{y}_{N_t}$ converges to $y^*(x_t)$ at a fast linear convergence rate; and (ii) both $\nabla_1 f$ and $G$ are shown to be Lipschitz continuous in their second argument. 
We refer to this algorithm as {\bf Cubic Regularization for Local Minimax (Cubic-LocalMinimax)}, and summarize its update rule in \Cref{algo: cubic-minimax} below, which terminates whenever the maximum of the previous two increments $\|s_{t-1}\|\vee\|s_t\|$ is below a certain threshold $\epsilon'$. Such an output rule helps characterize the computation complexity of the algorithm. In {Section \ref{sec:CRsolver}}, we provide a comprehensive discussion on how to solve the cubic subproblem with the special Hessian matrix $G(x_t, y_{t+1})$ using first-order GDA-type algorithms.

%Moreover, in \Cref{sec: stoc}, we leverage this output rule to develop a practical and adaptive sub-sampling scheme for the stochastic version of Cubic-LocalMinimax and analyze its sample complexity.

\begin{algorithm}
	\caption{Cubic-LocalMinimax}\label{algo: cubic-minimax}
	{\bf Input:} Initialize $x_0,y_0$, learning rates $\eta_{x}, \eta_y$, threshold $\epsilon'$, {numbers of iterations $T$, $N_t$}\\
	{Define $\|s_0\|=\epsilon'$} \\
	\For{ $t=0,1, 2,\ldots, T-1$}{
	    Initialize $\widetilde{y}_0 = y_t$
	    
        \For{$k=0,1, 2,\ldots, N_t-1$}{
        $$\widetilde{y}_{k+1} = \widetilde{y}_k + \eta_y \nabla_2 f(x_t, \widetilde{y}_k)$$
        }
        
        Set $y_{t+1} = \widetilde{y}_{N_t}$. 
        Solve the cubic problem for $s_{t+1}$:
        
        $$\mathop{\arg\!\min}_s \nabla_{\!1}f(x_t,\! y_{t+1})^{\!\top}\!s + \frac{1}{2} s^{\!\top}G(\!x_t,\! y_{t+1}\!)s+\frac{1}{6\eta_x}\|s\|^3$$
        
        Update $x_{t+1}=x_t+s_{t+1}$
        % denote $G(x_t, y_{t+1})$ as follows
        % \begin{align*}
        %     G(x_t, y_{t+1}) = \big[\nabla_{11} f - \nabla_{12} f (\nabla_{22} f)^{-1} \nabla_{21} f\big](x_t,y_{t+1}).
        % \end{align*}
        
		%$x_{t+1}\in \mathop{\arg\min}_x \nabla_{1}f(x_t,y_{t+1})^{\top}(x-x_t) + \frac{1}{2} (x-x_t)^{\top}G(x_t, y_{t+1}) (x-x_t)+\frac{1}{6\eta_x}\|x-x_t\|^3$.
%		\red{\If{$\max(\|s_t\|,\|s_{t+1}\|)\le\epsilon'$}
%		    {Return $x_{T'},y_{T'}$ where $T'=t+1$.}}
	}
	%\red{Return $x_{\widetilde{T}},y_{\widetilde{T}}$ where $\widetilde{T}=\arg\min_{1\le t\le T} \max(\|s_{t-1}\|,\|s_t\|)$.}
%	\textbf{Output I:} last iterate $x_T,y_T$
	
	\textbf{Output:}  $x_{T'},y_{T'}$,
	$T'=\min\{t: \|s_{t-1}\|\vee\|s_t\|\le\epsilon'\}$
	%$\widetilde{T}=\arg\min_{1\le t\le T} \max(\|s_{t-1}\|,\|s_t\|)$.\\
\end{algorithm}

\section{Convergence and Iteration Complexity of Cubic-LocalMinimax}\label{sec:analysis1}

In this section, we study the global convergence properties and the iteration complexity of Cubic-LocalMinimax. The key to our convergence analysis is characterizing an intrinsic potential function of Cubic-LocalMinimax in nonconvex minimax optimization. We formally present this result in the following proposition. 

{
\begin{restatable}[Potential function]{proposition}{proplya}\label{prop: lyapunov_cubic}
	Let \Cref{assum: fcubic} hold. For any $\alpha, \beta>0$, choose $\epsilon'\le \frac{\alpha L_1}{\beta L_G}$, $\eta_x\le (9L_{\Phi}+18\alpha+28\beta)^{-1}$ and $\eta_y=\frac{2}{L_1+\mu}$. {Define the potential function $H_t:=\Phi(x_t)+(L_{\Phi}+2\alpha+3\beta)\|s_t\|^3$}. Then, when $N_t\ge\mathcal{O}\big(\kappa\ln \frac{L_1\alpha \|s_{t-1}\|+ L_1(\alpha+L_2\kappa)\|s_t\|}{L_G\beta\epsilon'^2}\big)$, the output of Cubic-LocalMinimax satisfies the following potential function decrease property for all $t\in \mathbb{N}$.
	\begin{align}
        H_{t+1}-H_t \le -(L_{\Phi}+\alpha+\beta)\big(\|s_{t+1}\|^3+\|s_t\|^3\big). \label{eq: potential}
    \end{align}
\end{restatable}
}

% \begin{restatable}{proposition}{proplya}\label{prop: lyapunov_cubic}
% 	Let \Cref{assum: fcubic} hold. Define the following potential function 
% 	\begin{align}
% 	    H(x,x',y):=\Phi(x) + \red{L_2\kappa^3}\|x'-x\|^3 + \red{4L_2}\|y-y^*(x)\|^3, \label{eq: H}
% 	\end{align}
% 	and denote $H_t:=H(x_t,x_{t-1},y_{t+1})$. Choose $N_t\ge \frac{\red{\max(\ln 2, }\ln [\red{L_1}\|\nabla_2 f(x_t,y_t)\|/(\red{L_2}\mu)]-2\ln\|x_t-x_{t-1}\|)}{\ln[\kappa/(\kappa-1)]}$ and learning rates $\eta_x\le \red{\frac{1}{28L_2\kappa^3}}$, $\eta_y\le \frac{2}{L_1+\mu}$. Then, the sequences $\{x_t,y_t\}_t$ generated by Cubic-LocalMinimax satisfy, for all $t=0,1,2,...$
%     \begin{align}
%         H_{t+1}\le H_t\red{-L_2\kappa^3} \|x_{t+1}-x_t\|^3 \red{-L_2}\|y_{t+1}-y^*(x_t)\|^3. \label{eq: Hdec}
%     \end{align}
% Consequently, it holds that $\lim_{t\to \infty}\max \{\|x_{t+1} - x_t\|, \|y_{t+1} - y_t\|, \|y_{t} - y^*(x_t)\|\} =0$.
% % \begin{align} 
% % 	\lim_{t\to \infty} \|x_{t+1} - x_t \| = 0, ~~\lim_{t\to \infty} \|y_{t+1} - y_t \| = 0, ~~\lim_{t\to \infty} \|y_{t} - y^*(x_t) \| = 0. \nonumber
% % \end{align}
% \end{restatable}

\Cref{prop: lyapunov_cubic} reveals that Cubic-LocalMinimax admits an intrinsic potential function $H_t$, which takes the form of the envelope function $\Phi(x)$ plus the cubic increment term $\|s_t\|^3$. Moreover, the potential function $H_t$ is monotonically decreasing along the optimization path of Cubic-LocalMinimax, implying that the algorithm continuously makes optimization progress. 

The key for establishing such a potential function is that, by running a sufficient number of inner gradient ascent iterations, we can obtain a sufficiently accurate approximated maximizer $y_{t+1} \approx y^*(x_t)$. Consequently, the $\nabla_1 f(x_t, y_{t+1})$ and $G(x_t,y_{t+1})$ involved in the cubic subproblem are good approximations of $\nabla \Phi(x_t)$ and $\nabla^2\Phi(x_t)$, respectively. In fact, the approximation errors are proven to satisfy the following bounds. 
\begin{align}
    &\|\nabla\Phi(x_t)-\nabla_1 f(x_t, y_{t+1})\| 
    \le\beta(\|s_t\|^2+\epsilon'^2),\label{eq: grad_goal}\\ 
    &\|\nabla^2\Phi(x_t)-G(x_t,y_{t+1})\| \le\alpha(\|s_t\|+\epsilon'). \label{eq: Hess_goal}
\end{align}
On one hand, the above bounds are tight enough to establish the decreasing potential function. On the other hand, they are flexible and are adapted to the increment $\|s_t\|=\|x_t - x_{t-1}\|$ produced by the previous cubic update. Therefore, when the increment is large \red{(i.e., $\|s_t\|=\mathcal{O}(1)\gg \mathcal{O}(\epsilon')$), which usually occurs in the early iterations, our algorithm requires only $\mathcal{O}(1)$ gradient ascent steps. As a comparison, the Minimax Cubic-Newton (MCN) algorithm proposed in the concurrent work \cite{luo2022finding} adopts constant-level approximation errors, i.e., $\|\nabla\Phi(x_t)-\nabla_1 f(x_t, y_{t+1})\| \le\mathcal{O}(\epsilon'^2)$ and $\|\nabla^2\Phi(x_t)-G(x_t,y_{t+1})\| \le\mathcal{O}(\epsilon')$\footnote{Our $\epsilon'$ corresponds to $\mathcal{O}(\sqrt{\epsilon})$ in \cite{luo2022finding}.}, which requires much more gradient ascent steps ($\mathcal{O}(\ln (1/\epsilon'))$) in every iteration. In the converging phase where $||s_t||\le \mathcal{O}(\epsilon')$, our algorithm requires $\mathcal{O}(\ln (1/\epsilon'))$ gradient ascent steps, which is of the same order as that of MCN. Combining the two cases, our algorithm is more practical.}
%it suffices to use coarse approximations, and hence only a few number of inner gradient ascent iterations are needed. 
Such an idea of adapting the inexactness to the previous increment in \cref{eq: grad_goal,eq: Hess_goal} is further leveraged to develop a scalable stochastic variant of Cubic-LocalMinimax in \Cref{sec: stoc}. %{As a comparison, the Minimax Cubic-Newton algorithm proposed in the concurrent work \cite{luo2022finding} adopts constant-level approximation errors, i.e., $\|\nabla\Phi(x_t)-\nabla_1 f(x_t, y_{t+1})\| \le\mathcal{O}(\epsilon'^2)$ and $\|\nabla^2\Phi(x_t)-G(x_t,y_{t+1})\| \le\mathcal{O}(\epsilon')$\footnote{Our $\epsilon'$ corresponds to $\mathcal{O}(\sqrt{\epsilon})$ in \cite{luo2022finding}.}, which may require more gradient ascent iterations in practice.}

Based on Proposition \ref{prop: lyapunov_cubic}, we obtain the following global convergence rate of Cubic-LocalMinimax to a second-order stationary point of $\Phi$. 
Throughout, we adopt the following standard measure of second-order stationary introduced in \cite{Nesterov2006}.
\begin{align*}
    \mu(x)=\sqrt{\|\nabla \Phi(x)\|} \vee \frac{-\lambda_{\min}\big(\nabla^2 \Phi(x)\big)}{\sqrt{33L_{\Phi}}}.
\end{align*}
Intuitively, a smaller $\mu(x)$ means that the point $x$ is closer to being second-order stationary.

\begin{restatable}[Convergence and complexity of Cubic-LocalMinimax]{thm}{thmglobalrate}\label{thm: 1b}
Let the conditions of \Cref{prop: lyapunov_cubic} hold with $\alpha=\beta=L_{\Phi}$. For any $0<\epsilon\le \frac{L_1\sqrt{33L_{\Phi}}}{L_G}$, choose $\epsilon'=\frac{\epsilon}{\sqrt{33L_{\Phi}}}$ and $T\ge \frac{\Phi(x_0)-\Phi^*+8L_{\Phi}\epsilon'^2}{3L_{\Phi}\epsilon'^3}$. Then, the output of Cubic-LocalMinimax satisfies  %$\mu(x_{T'}) \le \epsilon.$ 
\begin{align}
  \mu(x_{T'}) \le \epsilon. \label{eq: gH_rate1}
\end{align}
Consequently, the total number of required cubic iterations satisfies $T' \le \mathcal{O}\big(\sqrt{L_2}\kappa^{1.5}\epsilon^{-3}\big)$, and the total number of required gradient ascent iterations satisfies $\sum_{t=0}^{T'-1} N_t \le \widetilde{\mathcal{O}} \big(\sqrt{L_2}\kappa^{2.5}\epsilon^{-3}\big)$.
\end{restatable}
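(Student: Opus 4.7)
The plan is to combine the potential decrease in \Cref{prop: lyapunov} with the first- and second-order optimality conditions of the cubic subproblem, following the spirit of the classic Nesterov--Polyak cubic-regularization analysis but with additional terms to account for the inexactness of $\nabla\Phi$ and $\nabla^2\Phi$. First I would specialize \Cref{prop: lyapunov} to $\alpha=\beta=L_\Phi$, giving $H_{t+1}-H_t\le -3L_\Phi(\|s_{t+1}\|^3+\|s_t\|^3)$. Observe that if the algorithm has not terminated by iteration $t$, then $\|s_{t-1}\|\vee\|s_t\|>\epsilon'$, which forces $\|s_{t-1}\|^3+\|s_t\|^3 > \epsilon'^3$. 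Telescoping the potential decrease and using $H_T\ge \Phi^*$ together with $H_0 = \Phi(x_0)+6L_\Phi\|s_0\|^3$ then contradicts the lower bound on $T$ unless termination occurs at some $T'\le T$, so $\|s_{T'-1}\|\vee\|s_{T'}\|\le \epsilon'$ is guaranteed.

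Next I would translate the approximate cubic optimality at iteration $T'-1$ into bounds on $\nabla\Phi(x_{T'})$ and $\lambda_{\min}(\nabla^2\Phi(x_{T'}))$. The first-order condition yields
\begin{align*}
\nabla_1 f(x_{T'-1},y_{T'}) + G(x_{T'-1},y_{T'})\,s_{T'} + \tfrac{\|s_{T'}\|}{2\eta_x} s_{T'}=\zero.
\end{align*}
Expanding $\nabla\Phi(x_{T'})$ around $x_{T'-1}$ via the fundamental theorem of calculus (using $L_\Phi$-Lipschitzness of $\nabla^2\Phi$ from \Cref{prop_Phiystar}), substituting this identity, and absorbing the two mismatches $\nabla\Phi(x_{T'-1})-\nabla_1 f(x_{T'-1},y_{T'})$ and $\nabla^2\Phi(x_{T'-1})-G(x_{T'-1},y_{T'})$ via the inexactness bounds \eqref{eq: grad_goal}--\eqref{eq: Hess_goal} (which, after termination, are dominated by constants times $\epsilon'^2$ and $\epsilon'$ respectively) yields $\|\nabla\Phi(x_{T'})\|\le C_1 L_\Phi\epsilon'^2$. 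For the Hessian, the second-order condition $G(x_{T'-1},y_{T'})+\frac{\|s_{T'}\|}{2\eta_x}I\succeq \zero$, combined with the Hessian approximation bound \eqref{eq: Hess_goal} and the $L_\Phi$-Lipschitzness of $\nabla^2\Phi$, gives $\lambda_{\min}(\nabla^2\Phi(x_{T'}))\ge -C_2 L_\Phi\epsilon'$. Plugging in $\epsilon'=\epsilon/\sqrt{33L_\Phi}$ then ensures both $\sqrt{\|\nabla\Phi(x_{T'})\|}\le \epsilon$ and $-\lambda_{\min}(\nabla^2\Phi(x_{T'}))/\sqrt{33L_\Phi}\le \epsilon$, i.e.\ $\mu(x_{T'})\le \epsilon$.

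Finally, for the complexity bounds, the iteration count is directly $T'\le T = \mathcal{O}\bigl((\Phi(x_0)-\Phi^*)/(L_\Phi\epsilon'^3)\bigr)$. Substituting $\epsilon'^{-3}=(33L_\Phi)^{3/2}\epsilon^{-3}$ and $L_\Phi=L_2(1+\kappa)^3=\mathcal{O}(L_2\kappa^3)$ (so that $\sqrt{L_\Phi}=\mathcal{O}(\sqrt{L_2}\kappa^{3/2})$) gives $T'=\mathcal{O}(\sqrt{L_2}\kappa^{1.5}\epsilon^{-3})$. The inner gradient-ascent count per outer iteration is $N_t=\mathcal{O}(\kappa\log(\cdot))$ as prescribed by \Cref{prop: lyapunov}, whose logarithmic argument depends only polynomially on $1/\epsilon'$ and $1/\epsilon$; summing over $t<T'$ yields $\sum_t N_t = \widetilde{\mathcal{O}}(\sqrt{L_2}\kappa^{2.5}\epsilon^{-3})$.

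I expect the main obstacle to be bookkeeping the constants so that the Hessian lower bound $-C_2 L_\Phi\epsilon'$ is sharp enough to match the $\sqrt{33L_\Phi}\epsilon$ threshold in the definition of $\mu$; this is exactly what dictates the specific $33$ factor and the coupling between the choices of $\eta_x$, $(\alpha,\beta)=(L_\Phi,L_\Phi)$, and $\epsilon'=\epsilon/\sqrt{33L_\Phi}$. Once these constants are pinned down, the rest is a clean combination of telescoping and Taylor expansion.
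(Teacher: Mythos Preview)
Your proposal is correct and follows essentially the same route as the paper's proof: the paper also telescopes the potential decrease of \Cref{prop: lyapunov} (packaged there as Lemma~\ref{lemma_ifErrSmall_noKL} and Lemma~\ref{lemma_general_rate}) to force $T'\le T$ by contradiction, then combines the cubic first- and second-order optimality conditions with the inexactness bounds \eqref{eq: grad_goal}--\eqref{eq: Hess_goal} and the $L_\Phi$-Lipschitz Hessian to obtain $\|\nabla\Phi(x_{T'})\|\le 32.5\,L_\Phi\epsilon'^2$ and $\lambda_{\min}(\nabla^2\Phi(x_{T'}))\ge -30.5\,L_\Phi\epsilon'$, exactly as you outline. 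The only minor difference is in bounding $\sum_t N_t$: the paper applies Jensen's inequality to $\frac{1}{T'}\sum_t\ln(\|s_{t-1}\|+\|s_t\|)$ and then controls $\sum_t\|s_t\|^3$ via the telescoped potential, whereas you simply bound each $N_t$ by $\mathcal{O}(\kappa\log(1/\epsilon))$ and multiply by $T'$; both yield the same $\widetilde{\mathcal{O}}$ result.
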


\red{\textbf{Remark 1:} The convergence rate result in Theorem \ref{thm: 1b} is about the terminated iteration $T'$, which is specified by a stopping criterion and is upper bounded by $T$. This is different from most of the existing last-iterate convergence results where the last iterate $T$ is prespecified.}

{\textbf{Remark 2:} We note that the gradient ascent steps for updating $\widetilde{y}_{k+1}$ in Algorithm \ref{algo: cubic-minimax} can be accelerated by using the standard Nesterov's momentum. In this way, the total number of required gradient ascent iterations will reduce to the order $\widetilde{\mathcal{O}} \big(\sqrt{L_2}\kappa^2\epsilon^{-3}\big)$, which matches that of the Minimax Cubic-Newton algorithm proposed in \cite{luo2022finding}.
%i.e., $\widetilde{y}_{k+1} = \widetilde{y}_k' + \eta_y \nabla_2 f(x_t, \widetilde{y}_k')$ where $\widetilde{y}_k'=\widetilde{y}_k+\frac{\sqrt{\kappa}-1}{\sqrt{\kappa}+1}(\widetilde{y}_k-\widetilde{y}_{k-1})$. 
}

The above theorem shows that the gradient norm $\|\nabla \Phi(x_t)\|$ vanishes at a sublinear rate $\mathcal{O}(T^{-\frac{2}{3}})$, and the second-order stationary measure $-\lambda_{\min} \big(\nabla^2 \Phi(x)\big)$ converges at a sublinear rate $\mathcal{O}(T^{-\frac{1}{3}})$. Both results match the convergence rates of the cubic regularization algorithm for nonconvex minimization \cite{Nesterov2006}. {As a comparison, the standard GDA does not guarantee the convergence of $-\lambda_{\min} \big(\nabla^2 \Phi(x)\big)$, and its convergence rate of
$\|\nabla \Phi(x_t)\|$ is of the order $\mathcal{O}(T^{-\frac{1}{2}})$ \cite{lin2019gradient}, which is orderwise slower than that of Cubic-LocalMinimax.} Therefore, by leveraging the curvature of the approximated Hessian matrix $G(x_t,y_{t+1})$, Cubic-LocalMinimax is able to find second-order stationary points of $\Phi$ at a fast rate.

We note that the proof of the global convergence results in Theorem \ref{thm: 1b} is critically based on the intrinsic potential function $H_t$ that we characterized in Proposition \ref{prop: lyapunov_cubic}. Specifically, note that the cubic subproblem in Cubic-LocalMinimax involves an approximated gradient $\nabla_1 f(x_t,y_{t+1})$ and Hessian matrix $G(x_t, y_{t+1})$. 
Such inexactness of the gradient and Hessian introduces non-negligible noise to the cubic regularization update of Cubic-LocalMinimax. Consequently, Cubic-LocalMinimax cannot make monotonic progress on decreasing the function value $\Phi$, as opposed to the standard cubic regularization algorithm in nonconvex minimization (which uses exact gradient and Hessian). Instead, we take a different approach and show that as long as the gradient and Hessian approximations are sufficiently accurate, one can construct a monotonically decreasing potential function $H_t$ that leads to the desired global convergence guarantee.

\section{How to Solve the Cubic Subproblem of Cubic-LocalMinimax?}\label{sec:CRsolver}
% Define the following mapping regarding $f(x,y)$.
% \begin{align*}
%     G(x,y) = \big[\nabla_{11}f - \nabla_{12}f \nabla_{22}^{-1}f \nabla_{21}f\big](x,y).
% \end{align*}
% Also, define the following auxiliary function 
% \begin{align*}
%     \zeta_{x,y}(u,v) &= \frac{1}{2} u^\top \nabla_{11}f(x,y) u + u^\top \nabla_{12}f(x,y) v \nonumber\\
%     &\quad+ \frac{1}{2} v^\top \nabla_{22}f(x,y) v.
% \end{align*}
% It can be seen that $\zeta_{x,y}(u,v)$ is strongly concave in $v$, and the corresponding maximizer satisfies $v^*(u) := \arg\max_v \zeta_{x,y}(u,v) = - \nabla_{22}^{-1}f(x,y) \nabla_{21}f(x,y) u$. Substitute this into $\zeta_{x,y}(u,v)$, it can be verified that
% \begin{align*}
%     \zeta_{x,y}(u,v^*(u)) = \frac{1}{2} u^\top G(x,y)u.
% \end{align*}
% Thus, the quadratic term $\frac{1}{2} u^\top G(x,y)u$ can be alternatively expressed as $\max_v \zeta_{x,y}(u,v)$. Consequently, the Cubic-LocalMinimax subproblem can be equivalently written as 
% \begin{align*}
%     &\mathop{\min}_s \nabla_{\!1}f(x_t,\! y_{t+1})^{\!\top}\!s + \frac{1}{2} s^{\!\top}G(\!x_t,\! y_{t+1}\!)s+\frac{1}{6\eta_x}\|s\|^3 \\
%     &= \mathop{\min}_s \nabla_{\!1}f(x_t,\! y_{t+1})^{\!\top}\!s + \max_v \zeta_{x_t,y_{t+1}}(s,v)+\frac{1}{6\eta_x}\|s\|^3 \\
%     &= \mathop{\min}_s\max_v \nabla_{\!1}f(x_t,\! y_{t+1})^{\!\top}\!s +  \zeta_{x_t,y_{t+1}}(s,v)+\frac{1}{6\eta_x}\|s\|^3 
% \end{align*}

%\red{Core part of CR algorithm design}

The Cubic-LocalMinimax presented in Algorithm \ref{algo: cubic-minimax} involves a cubic subproblem that takes the following form.
\begin{align}
   &s_{t+1} = \arg\min_s \phi(s):=g^{\top}s+\frac{1}{2}s^{\top}As+\frac{1}{6\eta_x}\|s\|^3, \label{CR} \\
   \text{where}~ g =\nabla_1 f(&x_t, y_{t+1}), ~ A = H_{11}-H_{12}H_{22}^{-1}H_{21} ~\text{with}~ H_{k\ell}=\nabla_{k\ell} f(x_t,y_{t+1}). \nonumber
\end{align}
%Denote $s^*:=\argmin_{s}\phi(s)$ as the optimal solution. 
To solve the above cubic subproblem, one standard approach is to apply the existing gradient-based solvers \cite{carmon2019gradient,Tripuraneni18}, which requires computing the Hessian-vector product $A\cdot s$. However, in Cubic-LocalMinimax, the Hessian matrix takes the complex form $A = H_{11}-H_{12}H_{22}^{-1}H_{21}$ that involves product of block Hessian matrices as well as matrix inverse. Hence, directly computing the product of such a Hessian matrix with any vector can be highly inefficient. On the other hand, in the concurrent work \cite{luo2022finding}, the authors proposed two-timescale update rules for computing such Hessian-vector product, where they approximate the matrix inverse $H_{22}^{-1}$ via Chebyshev polynomials\footnote{See eqs. (32)-(33) of \cite{luo2022finding}.}. To further simplify these update rules and reduce computation, %to first approximate the entire Hessian matrix $A$ via matrix Chebyshev polynomial and then use the standard gradient solver to solve the cubic subproblem. But this needs substantial additional computation/memory to approximate/store the Hessian matrix $A$. \red{(Ziyi: It doesn't require computing $A$?? I mentioned the differences right above Algorithm \ref{algo: cubic-minimax-inexact} in red)} Next, 
we next propose an efficient GDA-type algorithm to solve this cubic subproblem with the special Hessian matrix $A$.

Our main idea is to reformulate the cubic subproblem in order to avoid the matrix inverse $H_{22}^{-1}$ involved in the Hessian matrix $A$. Specifically, we observe that the above cubic subproblem can be rewritten as the following minimax optimization problem. 
\begin{align}
    \min_s \max_v \widetilde{\phi}(s,v):=g^{\top}s+\frac{1}{2}s^{\top}H_{11}s+s^{\top}H_{12}v+\frac{1}{2}v^{\top}H_{22}v+\frac{1}{6\eta_x}\|s\|^3. \label{CR_minimax}
\end{align}
To explain, note that the above bi-variate function $\widetilde{\phi}$ is strongly concave in $v$ with the unique maximizer given by $v^*(s):=\mathop{\arg\max}_{v}\widetilde{\phi}(s,v)=-H_{22}^{-1}H_{21}s$. Substituting this maximizer into the function $\widetilde{\phi}\big(s, \cdot)$ yields the original cubic subproblem, i.e., $\widetilde{\phi}\big(s,v^*(s)\big) = \phi(s)$. Moreover, since $\widetilde{\phi}(s,v)$ is a nonconvex-strongly-concave function {(because $H_{22}\preceq-\mu I$)},  %(because $H_{22}$ is assumed to be positive definite)
we are motivated to develop a GDA-type solver to solve it. Specifically, our solver, named as \textbf{GDA-Cubic Solver}, is partially inspired by the existing gradient-based cubic solvers {\cite{Tripuraneni18}} and is summarized in Algorithms \ref{algo: Cubic_solve} and \ref{algo: Cubic_solve_final}. To elaborate, the solver performs updates based on the following two cases. 
\begin{itemize}[leftmargin=*,topsep=0mm,itemsep=1mm]
    \item Large gradient $\|g\|\ge 4L_1^2\kappa^2\eta_{x}$: In this case, the first-order gradient $g$ is far from being stationary, and it is more preferable to constrain the solution of the cubic subproblem in \cref{CR} to $s=-\frac{\gamma}{\|g\|}g$ for some $\gamma>0$ \cite{Tripuraneni18}. In particular, the optimal choice $\gamma^*$, named as Cauchy radius, has been shown in \cite{conn2000trust} to take the following form. 
\begin{align}
    \gamma^*:=\argmin_{\gamma\ge 0}\phi\Big(-\gamma\frac{g}{\|g\|}\Big)=\sqrt{\Big(\frac{\eta_x g^{\top}Ag}{\|g\|^2}\Big)^2+2\eta_x\|g\|}-\frac{\eta_x g^{\top}Ag}{\|g\|^2}. \label{alpha_opt}
\end{align}  
Here, to compute the quantity $\frac{g^{\top}Ag}{\|g\|^2}$ with $A=H_{11}-H_{12}H_{22}^{-1}H_{21}$, we propose to rewrite it as
\begin{align}
    \frac{g^{\top}Ag}{\|g\|^2}=\frac{g^{\top}H_{11}g}{\|g\|^2}-\frac{(H_{21}g)^{\top}w^*}{\|g\|}, ~\text{where}~ w^*:=H_{22}^{-1}\frac{H_{21}g}{\|g\|}. \label{eq:gAg_approx}
\end{align}
Note that both $H_{11}g$ and $H_{21}g$ are Hessian-vector products that can be efficiently computed by the popular machine learning platforms such as TensorFlow \cite{tensorflow2015-whitepaper} and PyTorch \cite{PyTorch}. To compute $w^*$, note that it can be viewed as the unique maximizer of the $\mu$-strongly concave problem $\max_w \frac{1}{2}w^{\top}H_{22}w-\frac{(H_{21}g)^{\top}}{\|g\|}w$. We can solve this problem by performing $K$ gradient descent steps (see \cref{w_update}) and obtain an approximated minimizer $w_K\approx w^*$ with high accuracy.
% \begin{align}
%     \frac{g^{\top}Ag}{\|g\|^2}&=\frac{g^{\top}}{\|g\|}H_{11}\frac{g}{\|g\|}-\frac{(H_{21}g)^{\top}}{\|g\|}H_{22}^{-1}\frac{H_{21}g}{\|g\|}=\frac{g^{\top}}{\|g\|}H_{11}\frac{g}{\|g\|}-\frac{(H_{21}g)^{\top}w^*}{\|g\|}\approx\frac{g^{\top}}{\|g\|}H_{11}\frac{g}{\|g\|}-\frac{(H_{21}g)^{\top}w_K}{\|g\|}, \label{eq:gAg_approx}
% \end{align}

\item Small gradient $\|g\| < 4L_1^2\kappa^2\eta_{x}$: In this case, we propose to solve the equivalent cubic subproblem in \cref{CR_minimax} via a nested-loop GDA-type algorithm, as it is nonconvex-strongly-concave. Specifically, we first fix $s$ and maximize $\widetilde{\phi}(s,\cdot)$ via gradient ascent for multiple iterations to estimate the maximizer $v^*(s)$ (see \cref{v_update}). Then, we fix $v$ and minimize $\widetilde{\phi}(\cdot, v)$ via one step of perturbed gradient descent (see \cref{s_update}).
\end{itemize}
% we substitute $s=-\frac{\gamma}{\|g\|}g$ into the cubic-regularization subproblem \eqref{CR_minimax} and obtain $\gamma\ge 0$ via the following minimax optimization problem
% \begin{align}
%     \min_{\gamma\ge 0} \max_v \widetilde{\phi}\Big(-\frac{\gamma}{\|g\|}g,v\Big):=-\gamma \|g\|+\frac{\gamma^2}{2\|g\|^2}g^{\top}H_{11}g-\frac{\gamma}{\|g\|} g^{\top}H_{12}v+\frac{1}{2}v^{\top}H_{22}v+\frac{\gamma^3}{6\eta_x}. \label{CRg_minimax}
% \end{align}

%\red{(Merge with the new section that discusses how to solve the cubic subproblem.)} 
%We also note that the cubic regularization subproblem can be efficiently solved by gradient-based algorithms \cite{carmon2019gradient,Tripuraneni18}, 

%which involve computation of Hessian-vector product that can be efficiently implemented by the existing machine learning platforms such as TensorFlow \cite{tensorflow2015-whitepaper} and PyTorch \cite{PyTorch}. Please refer to \Cref{app: jacob} for more discussion about how to compute the Hessian-vector product involved in Cubic-LocalMinimax.
\begin{algorithm}
	\caption{GDA-Cubic Solver}\label{algo: Cubic_solve}
	{\bf Input:} Gradient $g$, Hessians $H_{11}$, $H_{12}$, $H_{22}$, perturbation magnitude $\sigma$, learning rates $\eta_x, \eta_v, \eta_s$, numbers of iterations $K$, $\{N_k'\}_{k=0}^{K-1}$ \\
 	\eIf{$\|g\|\ge 4L_1^2\kappa^2\eta_x$}
 	{$w_0=0$\\
        \For{$k=0,\ldots,K-1$}{
        \begin{align}
        w_{k+1}=w_{k}+\eta_v\Big(H_{22}w_{k}-\frac{H_{21}g}{\|g\|}\Big) \label{w_update}
        \end{align}
	    }
        \begin{align*}
	    \beta_K=&\frac{g^{\top}}{\|g\|}H_{11}\frac{g}{\|g\|}-\frac{(H_{21}g)^{\top}w_K}{\|g\|}\\
	    \gamma_K=&\sqrt{(\eta_x\beta_K)^2+2\eta_x\|g\|}-\eta_x\beta_K\\
	    s_K'=&-\gamma_K g
        \end{align*}
 	}
    {Obtain $\xi\sim\text{Uniform}(\{x\in\mathbb{R}^{m}: \|x\|=1\})$.\\
    	\For{$k=0,\ldots,K-1$}{
    	    $v_{k,0}=0$\\
    	    \For{$\ell=0,\ldots,N_k'-1$}{
    	        \begin{align}
                v_{k,\ell+1}=v_{k,\ell}+\eta_v(H_{12}^{\top}s_k'+H_{22}v_{k,\ell})\label{v_update}
    	        \end{align}
    	    }
    	    $v_k=v_{k,N_k'}$.    
    	    \begin{align}
    	        s_{k+1}'\!=\!s_k'\!-\!\eta_s\Big(\!g\!+\!\sigma\xi+H_{11}s_k'+H_{12}v_{k}+\frac{\|s_k'\|}{2\eta_x}s_k'\!\Big)\label{s_update}
    	    \end{align}
    	}
 	}
    % \begin{align}
    %     h\leftarrow g^{\top}s_k+\frac{1}{2}s_k^{\top}H_{11}s_k+s_k^{\top}H_{12}v_k+\frac{1}{2}v_k^{\top}H_{22}v_k.\nonumber
    % \end{align}
	\textbf{Output:} $s_K'$.
\end{algorithm}

\begin{algorithm}
	\caption{GDA-Cubic FinalSolver}\label{algo: Cubic_solve_final}
	{\bf Input:} Gradient $g$, Hessians $H_{11}$, $H_{12}$, $H_{22}$, learning rates $\eta_x, \eta_v, \eta_s$, numbers of iterations $K$, $\{N_k'\}_{k=0}^{K-1}$ \\
	\For{$k=0,\ldots,K-1$}{
	    $v_{k,0}=0$\\
	    \For{$\ell=0,\ldots,N_k'-1$}{
	        Obtain $v_{k,\ell+1}$ using eq. \eqref{v_update} with learning rate $\eta_v$.
	    }
	    $v_k=v_{k,N_k'}$.
	    \begin{align}
	        g_k=&g+H_{11}s_k'+H_{12}v_{k}+\frac{\|s_k'\|}{2\eta_x}s_k'\label{gk}\\
	        s_{k+1}'=&s_k'-\eta_sg_k \label{s_update_gk}
	    \end{align}
	}
	\textbf{Output: $s_{K'}'$, $K'=\min\{k:\|g_k\|\le L_{\Phi}\epsilon'^2\}$}
\end{algorithm}

We note that all the steps of GDA-Cubic Solver are based on computing Hessian-vector products, which can be efficiently computed and does not require storing the Hessian matrix. Equipped with this GDA-Cubic Solver, we propose the following Inexact Cubic-LocalMinimax algorithm summarized in Algorithm \ref{algo: cubic-minimax-inexact}. 

\begin{algorithm}
	\caption{Inexact Cubic-LocalMinimax}\label{algo: cubic-minimax-inexact}
	{\bf Input:} Initialize $x_0,y_0$, learning rates $\eta_{x}, \eta_y$, threshold $\epsilon'$, numbers of iterations $T$, $N_t$
	%$K$, \red{(Ziyi: I plan to keep hyperparameters of Algorithm \ref{algo: cubic-minimax-inexact} and leave those of Algorithm \ref{algo: Cubic_solve} to Lemmas, so keep $N_t$ and remove $K$)}\\

	Define $\|\widetilde{s}_0\|=\epsilon'$ \\
	\For{ $t=0,1, 2,\ldots, T-1$}{
	    Initialize $\widetilde{y}_0 = y_t$
	    
        \For{$k=0,1, 2,\ldots, N_t-1$}{
        $$\widetilde{y}_{k+1} = \widetilde{y}_k + \eta_y \nabla_2 f(x_t, \widetilde{y}_k)$$
        }
        
        Set $y_{t+1} = \widetilde{y}_{N_t}$ \\
        Approximately solve the cubic problem $\mathop{\arg\!\min}_s \nabla_{\!1}f(x_t,\! y_{t+1})^{\!\top}\!s + \frac{1}{2} s^{\!\top}G(\!x_t,\! y_{t+1}\!)s+\frac{1}{6\eta_x}\|s\|^3$ for $\widetilde{s}_{t+1}$ using Algorithm \ref{algo: Cubic_solve} with %hyperparameters $\sigma$, $\eta_x, \eta_v, \eta_s$, $K$, $\{N_k'\}_{k=0}^{K-1}$, 
        $g:=\nabla_{\!1}f(x_t,\! y_{t+1})$ and $H_{k\ell}:=\nabla_{k\ell} f(x_t,y_{t+1})$ ($k,\ell\in\{1,2\}$)
        
        Update $x_{t+1}=x_t+\widetilde{s}_{t+1}$ 
        
        {\If{$\|\widetilde{s}_{t-1}\|\vee\|\widetilde{s}_t\|\le\epsilon'$}{
        Obtain $\widetilde{s}$ using Algorithm \ref{algo: Cubic_solve_final} with $g:=\nabla_{\!1}f(x_t,\! y_{t+1})$ and $H_{k\ell}:=\nabla_{k\ell} f(x_t,y_{t+1})$ ($k,\ell\in\{1,2\}$)
        \begin{align*}
            &T':=\min\{t: \|\widetilde{s}_{t-1}\|\vee\|\widetilde{s}_t\|\le\epsilon'\}\leftarrow t\\
            &\widetilde{x}_{T'}=x_{T'-1}+\widetilde{s}
        \end{align*}
        \textbf{Output:}  $\widetilde{x}_{T'},y_{T'}$
        }}
	}
	%\red{Return $x_{\widetilde{T}},y_{\widetilde{T}}$ where $\widetilde{T}=\arg\min_{1\le t\le T} \max(\|s_{t-1}\|,\|s_t\|)$.}
%	\textbf{Output I:} last iterate $x_T,y_T$
\end{algorithm}

We note that our 
Algorithm \ref{algo: cubic-minimax-inexact} is different from the Inexact Minimax Cubic-Newton (IMCN) Algorithm proposed in Algorithm 3 of the concurrent work \cite{luo2022finding} in the following aspects.
\begin{itemize}[leftmargin=*,topsep=0mm,itemsep=1mm]
    \item First, as mentioned in Section \ref{sec:analysis1}, we adopt the more relaxed adaptive gradient and Hessian approximations in eqs. \eqref{eq: grad_goal} \& \eqref{eq: Hess_goal} for the gradient ascent steps, whereas they adopt constant approximation errors. 
    %\item Second, our GDA-based cubic solver is simpler and very different from their gradient descent-based cubic solver that uses Chebyshev polynomials to handle the matrix inverse. Specifically, our cubic solver computes the complex Hessian-vector product $A\cdot s$ via one-timescale gradient ascent steps (see eq. \eqref{v_update} in Algorithm \ref{algo: Cubic_solve}), whereas their solver uses two-timescale updates (see eqs. (32)-(33) of \cite{luo2022finding}) that requires more Hessian-vector product computations per iteration.
    \item \red{Second}, both our Inexact Cubic-LocalMinimax (Algorithm \ref{algo: cubic-minimax-inexact}) and our GDA-based cubic solver (Algorithm \ref{algo: Cubic_solve}) adopt simple termination rules that are purely based on tracking the norm of the increments $\|s_{t-1}\|, \|s_t\|$, which are directly accessible in each iteration. As a comparison, the termination rules of their Inexact Minimax Cubic-Newton Algorithm and its cubic solver need to additionally track the approximate objective function value of the cubic subproblem, which requires additional computation. 
    %the cubic solver in \cite{luo2022finding} returns not only the CR solution $s_t$ but also its approximate CR objective function value $\Delta_t\approx \phi(s_{t+1})$ used for the termination rule of the Minimax Cubic-Newton Algorithm, which requires additional computation of $A\cdot s$ per iteration of the cubic solver. In contrast, our GDA-Cubic Solver (Algorithm \ref{algo: Cubic_solve}) only requires to return $\widetilde{s}_t$ due to our more practical termination rule $\|\widetilde{s}_{t-1}\|\vee\|\widetilde{s}_t\|\le\epsilon'$. 
\end{itemize}

We obtain the following overall computation complexity result of Algorithm \ref{algo: cubic-minimax-inexact}.
\begin{restatable}[Computation complexity of Inexact Cubic-LocalMinimax]{thm}{thmglobalrateInexact}\label{thm: inexact}
	{Let \Cref{assum: fcubic} hold. For any $0<\epsilon\le \min\Big(\frac{53L_1\kappa}{228\sqrt{L_{\Phi}}},L_1^2L_2^{-1/2}\kappa^{1/2},\frac{L_2\kappa^2}{L_1}\Big)$ and $\delta\in(0,1)$, choose $\epsilon'=\frac{\epsilon}{106\sqrt{L_{\Phi}}}$, $T=\Theta\big(\sqrt{L_{\Phi}}[\Phi(x_0)-\Phi^*+\epsilon^2]\epsilon^{-3}\big)$, $\eta_x=\Theta\big(L_{\Phi}^{-1}\big)$, $\eta_y=\frac{2}{L_1+\mu}$ and $N_t=\Theta\Big(\kappa\ln \frac{L_1\alpha \|\widetilde{s}_{t-1}\|+ L_1(\alpha+L_2\kappa)\|\widetilde{s}_t\|}{L_G\epsilon^2}\Big)$ (see eq. \eqref{eq: Nt2}) in Algorithm \ref{algo: cubic-minimax-inexact}. 
	When implementing Algorithm \ref{algo: Cubic_solve} at the $t$-th iteration, use hyperparameters in Lemma \ref{lemma:smallg} with $\delta'=\delta/T$ if $\|\nabla_1 f(x_t,y_{t+1})\|\le 4L_1^2\kappa^2\eta_x$, and use those in Lemma \ref{lemma:bigg} otherwise. When implementing Algorithm \ref{algo: Cubic_solve_final}, use the hyperparameter choices in Lemma \ref{lemma:CRfinal}. Then, with probability at least $1-\delta$, the output of Inexact Cubic-LocalMinimax satisfies
\begin{align}
  \mu(\widetilde{x}_{T'})\le \epsilon. \label{eq: gH_rate_inexact}
\end{align}
Consequently, the total number of required cubic iterations satisfies $T' \le \mathcal{O}\big(\sqrt{L_2}\kappa^{1.5}\epsilon^{-3}\big)$, the total number of required gradient ascent iterations satisfies $\sum_{t=0}^{T'-1} N_t \le \widetilde{\mathcal{O}} \big(\sqrt{L_2}\kappa^{2.5}\epsilon^{-3}\big)$, and the total number of required Hessian-vector product computations (in Algorithms \ref{algo: Cubic_solve} \& \ref{algo: Cubic_solve_final}) is of the order $\widetilde{O}(L_1\kappa^2\epsilon^{-4})$.}
\end{restatable}

\textbf{Remark:} We can apply the standard Nesterov's momentum to further accelerate the convergence rate of the gradient ascent steps of Algorithms \ref{algo: Cubic_solve}, \ref{algo: Cubic_solve_final} and \ref{algo: cubic-minimax-inexact}. The resulting total number of gradient ascent iterations and total number of Hessian-vector products will then be improved to $\widetilde{\mathcal{O}} \big(\sqrt{L_2}\kappa^2\epsilon^{-3}\big)$ and $\widetilde{O}(L_1\kappa^{1.5}\epsilon^{-4})$, respectively, which match those of the Inexact Minimax Cubic-Newton algorithm in \cite{luo2022finding}. %\footnote{$\epsilon$ in \cite{luo2022finding} corresponds to our $\epsilon^2$.}. 

Compared with Theorem \ref{thm: 1b}, it can be seen from the above Theorem \ref{thm: inexact} that the total number of cubic iterations and that of gradient ascent iterations remain the same, demonstraing the effectiveness of our proposed GDA-based cubic solver. 
Moreover, since our algorithm design and cubic solver design are different from those of \cite{luo2022finding}, our convergence proof of Theorem \ref{thm: inexact} is therefore substantially different from that of \cite{luo2022finding} in the following aspects. 
\begin{itemize}[leftmargin=*,topsep=0mm,itemsep=1mm]
\item First, our Algorithm \ref{algo: cubic-minimax-inexact}  adopts the more relaxed adaptive approximation criteria \eqref{eq: grad_goal} \& \eqref{eq: Hess_goal} to save computation in practice, and thus cannot guarantee monotonic decrease of $\Phi(x_t)$. Instead, we established  $\Phi(x_{t+1})-\Phi(x_t)\le -(11L_{\Phi}+8\alpha+11\beta)\|\widetilde{s}_{t+1}\|^3+(9L_{\Phi}+6\alpha+9\beta)\|\widetilde{s}_t\|^3$ (see eq. \eqref{Phidiff_bigg_inexactCR}), which implies that our constructed potential function $H_t:=\Phi(x_t)+(10L_{\Phi}+7\alpha+10\beta)\|\widetilde{s}_t\|^3$ is monotonically decreasing as shown in Proposition \ref{prop: lyapunov_cubic_inexact}. 

\item Second, as our Inexact Cubic-LocalMinimax (Algorithm \ref{algo: cubic-minimax-inexact}) uses the termination rule $\|\widetilde{s}_{t-1}\|\vee\|\widetilde{s}_t\|\le\epsilon'$ that only relies on the norm of the increments, we need to prove $\|s_{T'}\|,\|\widetilde{s}\|\le\mathcal{O}(\epsilon')$ in order to ensure the second-order stationary condition $\mu(x)\le\epsilon$. In particular, we have proved eq. \eqref{sK_lower} in Lemma \ref{lemma:bigg}, which implies that $\|\widetilde{s}_{T'}\|\le\epsilon'$ cannot hold under large gradient, so we conclude that $\|\nabla_1 f(x_{T'},y_{T'+1})\|\le 4L_1^2\kappa^2\eta_x$. In this small gradient case, eq. \eqref{sKdown_smallg} we proved in Lemma \ref{lemma:smallg} implies that the exact CR solution $s_{T'}$ satisfies $\|s_{T'}\|\le 3\epsilon'$, which combined with the final cubic solver (Algorithm \ref{algo: Cubic_solve_final}) yields that the final CR solution $\widetilde{s}$ must satisfy eq. \eqref{snorm_final} (i.e., $\|\widetilde{s}\|\le 7\epsilon'$) in Lemma \ref{lemma:CRfinal}. As a comparison, the Inexact
Minimax Cubic-Newton Algorithm in \cite{luo2022finding} terminates based on tracking the objective function value of the cubic subproblem, which requires additional Hessian-vector product computation in practice and does not involve these technical developments. 

%that is not obtained in , instead of the CR  as the termination criterion to reduce the computation of $A\cdot s$, 
\end{itemize}

%\red{Complexity Results. Add Remark: Also proved proposition, Proof Logic and Novelty.}

% \red{To ask:}\\
% Require final solver.\\
% Higher complexity (compare with page 19 of the other minimax-CR paper) due to ...? $T,K,N_t$ are same as the other minimax-CR paper. Keep current and say our advantage in dimensionality ($K'\to dim*K'$ in page 14 of the other minimax-CR paper) or how to improve? TMLR see evidence and interest rather than significance\\
% $N_t$ both as Theorem 2 of another minimax-CR?\\
% Cite hyperparameters from Lemmas? Proposition move to main text.

\section{Stochastic Cubic-LocalMinimax }\label{sec: stoc}

In this section, we apply stochastic sub-sampling to further improve the performance and complexity of Cubic-LocalMinimax in large-scale nonconvex minimax optimization with big data. Specifically, we consider the following stochastic finite-sum minimax optimization problem (Q). 
\begin{align}
	\min_{x\in \mathbb{R}^m}\max_{y\in \mathbb{R}^n}~ f(x,y):= \frac{1}{N}\sum_{i=1}^N f_i(x,y), \tag{Q}
\end{align} 
where $N$ denotes the total number of training samples and $f_i$ corresponds to the loss function on the $i$-th sample. We adopt the following assumptions. 

\begin{assum}\label{assum: fi}
	The stochastic minimax optimization problem $\mathrm{(Q)}$ satisfies:
	\begin{enumerate}[leftmargin=*,topsep=0mm,itemsep=1mm]
		\item For any sample $i$, function $f_i(\cdot,\cdot)$ is $L_1$-smooth, function $f_i(\cdot,y)$ is $L_0$-Lipschitz for any fixed $y$, and function $f_i(x,\cdot)$ is $\mu$-strongly concave for any fixed $x$;
		\item For any sample $i$, the Hessian mappings $\nabla_{11} f_i$, $\nabla_{12} f_i$, $\nabla_{21} f_i$, $\nabla_{22} f_i$ are $L_2$-Lipschitz continuous;
		\item Function $\Phi(x):=\max_{y\in \mathbb{R}^n} f(x,y)$ is bounded below and has bounded sub-level sets.
	\end{enumerate}
\end{assum} 

Applying Cubic-LocalMinimax to solve the above problem (Q) would require querying full partial gradients and full Hessian matrices that involve all the training samples. Instead, it is more efficient to approximate these quantities via stochastic sub-sampling. Therefore, we propose a \textbf{stochastic Cubic-LocalMinimax} algorithm, which replaces the exact quantities $\nabla_1 f$ and $G$ involved in the cubic update by their corresponding stochastic approximations. 

Specifically, to approximate the partial gradient $\nabla_1 f$, we sub-sample a mini-batch of samples $B_1$ with replacement from the training set and construct the following sample average approximation. 
\begin{align}
    \widehat{\nabla}_1 f(x,y) &= \frac{1}{|B_1|} \sum_{i \in B_1} \nabla_1 f_i(x,y). \label{eq: stoc_f1}
\end{align}
On the other hand, to approximate the matrix $G$, we sub-sample mini-batches of samples $B_{11}, B_{12}, B_{21}, B_{22}$ with replacement and construct approximated Hessian matrices $\widehat{\nabla}_{11} f, \widehat{\nabla}_{12} f, \widehat{\nabla}_{21} f, \widehat{\nabla}_{22} f$ in the same way as above. Then, we construct the following approximation of $G$.
\begin{align}
    \widehat{G}(x,y) = \big[\widehat{\nabla}_{11} f - \widehat{\nabla}_{12} f (\widehat{\nabla}_{22}f)^{-1} \widehat{\nabla}_{21} f\big](x,y). \label{eq: stocG}
\end{align}
We summarize the update rule of stochastic Cubic-LocalMinimax in \Cref{algo: cubic-minimax-stoc} below.
In particular, we run stochastic gradient ascent \red{(SGA)} in the inner iterations to obtain the approximated maximizer $y_{t+1}$, and its high-probability convergence rate has been established in the existing stochastic optimization literature \cite{harvey2019simple} \red{, as shown in Theorem \ref{thm: SGA}} below. 

\begin{algorithm}
	\caption{Stochastic Cubic-LocalMinimax}\label{algo: cubic-minimax-stoc}
	{\bf Input:} Initialize $x_0,y_0$, learning rates $\eta_{x}$, threshold $\epsilon'$, {numbers of iterations $T$, $N_t$} \\
	{Define $\|s_0\|=\epsilon'$} \\
	\For{ $t=0,1, 2,\ldots, T-1$}{
	    Initialize $\widetilde{y}_0 = y_t$
	    
        \For{$k=0,1, 2,\ldots, N_t-1$}{
            Query a sample $\xi$ to compute $\nabla_2 f_{\xi}(x_t, \widetilde{y}_k)$\\
            $$\widetilde{y}_{k+1} = \widetilde{y}_k + \frac{2}{\mu(k+1)} \nabla_2 f_{\xi}(x_t, \widetilde{y}_k)$$
        }
        
        Set $y_{t+1} = \sum_{k=0}^{N_t-1}\frac{2k}{N_t(N_t-1)}\widetilde{y}_{k}$
        
        Sample minibatch $B_1(t),B_{11}(t),B_{12}(t),B_{21}(t),B_{22}(t)$ to compute \cref{eq: stoc_f1} and \cref{eq: stocG}. Then, solve the following cubic problem for $s_{t+1}$:
        
        $$\mathop{\arg\!\min}_s \!\widehat{\nabla}_{\!1}f(x_t,\! y_{t+1})^{\!\top}\!s + \frac{1}{2} s^{\!\top}\widehat{G}(x_t,\! y_{t+1}\!)s+\frac{1}{6\eta_x}\|s\|^3$$
        
        Update $x_{t+1}=x_t+s_{t+1}$
	}
	\textbf{Output:}  $x_{T'},y_{T'}$ with 
	$T'=\min\{t: \|s_{t-1}\|\vee\|s_t\|\le\epsilon'\}$
\end{algorithm}

\red{%The following high-probability convergence rate of SGA is a direct application of the Theorem 3.1 of \cite{harvey2019simple}.

\begin{restatable}{thm}{thmSGA}\label{thm: SGA}
    Let Assumption \ref{assum: fi} hold. For all $t,k$, assume that $\|\nabla_2 f(x_t,\widetilde{y}_k)\|\le L_0$ and $\|\widehat{\nabla}_2 f(x_t,\widetilde{y}_k)-\nabla_2 f(x_t,\widetilde{y}_k)\|\le 1$ almost surely.
    The inner stochastic gradient ascent steps in \Cref{algo: cubic-minimax-stoc} converge at the following rate with probability at least $1-\delta$.
    \begin{align}
        \|y_{t+1}-y^*(x_t)\|\le \mathcal{O}\Big(\sqrt\frac{L_0\ln(1/\delta)+L_0^2}{\mu^2 N_t}\Big).\nonumber
    \end{align}
\end{restatable}
}

The following lemma characterizes the sample complexities of all the stochastic approximators for achieving a certain approximation accuracy.

\begin{restatable}{lemma}{lemmaBatch}\label{lemma: batch}
    Fix any $0<\epsilon_1 \le 2L_0$, $0<\epsilon_2{\le 4L_1}$ and choose the following batch sizes
    \begin{align}
    |B_1| &\ge \mathcal{O}\Big(\frac{L_0^2}{\epsilon_1^{2}}\ln\frac{m}{\delta} \Big), \label{eq: b1}\\  
    |B_{11}|,|B_{12}|,|B_{21}|,|B_{22}|&\ge \mathcal{O}\Big(\frac{L_1^2}{\epsilon_2^{2}} \ln\frac{m+n}{\delta}\Big). \label{eq: b2}
    \end{align}
    Then, the stochastic approximators satisfy the following error bounds with probability at least $1-\delta$.
    \begin{align}
        &\|\widehat{\nabla}_1 f(x,y)-\nabla_1 f(x,y)\|\le \epsilon_1,\label{eq: stoc_err1}\\ 
        &\|\widehat{\nabla}_{k\ell}^2 f(x,y) \!-\! \nabla_{k\ell}^2 f(x,y)\|\le \epsilon_2, ~ \forall k,\ell \in \{1,2\},\label{eq: stoc_err2}\\ 
        &\|\widehat{G}(x,y)-G(x,y)\|\le (\kappa+1)^2\epsilon_2.\label{eq: stoc_err3}
    \end{align}
\end{restatable}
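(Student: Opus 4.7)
The plan is to establish the three high-probability error bounds by applying matrix/vector concentration to each of the five sub-sampled estimators, and then propagating the block-Jacobian errors through the Schur-complement formula that defines $G$.

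First I would verify the uniform norm bounds on the summands that control the ``range'' parameter in any Bernstein-type inequality. Since $f_i(\cdot,y)$ is $L_0$-Lipschitz by \Cref{assum: fi}, we have $\|\nabla_1 f_i(x,y)\|\le L_0$ for every $i$. Since $f_i$ is $L_1$-smooth, the full Hessian $\nabla^2 f_i(x,y)$ has spectral norm at most $L_1$, and hence each block $\nabla_{k\ell} f_i(x,y)$ inherits $\|\nabla_{k\ell} f_i(x,y)\|\le L_1$. Moreover, per-sample strong concavity of $f_i(x,\cdot)$ gives $-\nabla_{22}f_i(x,y)\succeq \mu I$ deterministically, so the sub-sampled average satisfies $-\widehat{\nabla}_{22}f(x,y)\succeq \mu I$ as well; in particular $\widehat{\nabla}_{22}f$ is always invertible with $\|\widehat{\nabla}_{22}f^{-1}\|\le 1/\mu$ on the entire probability space.

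Next, I would apply the matrix Bernstein inequality to each of the five centered sample averages. For $\widehat{\nabla}_1 f - \nabla_1 f$, treating vectors in $\mathbb{R}^m$ as $m\times 1$ matrices, matrix Bernstein bounds the deviation probability by $(m+1)\exp(-c|B_1|\epsilon_1^2/(L_0^2+L_0\epsilon_1))$; combined with $\epsilon_1\le 2L_0$ and the stated size of $|B_1|$, this is at most $\delta/5$, yielding \cref{eq: stoc_err1}. For each $\widehat{\nabla}_{k\ell} f - \nabla_{k\ell} f$, regarded as an at-most $(m+n)\times(m+n)$ sum of centered bounded matrices, matrix Bernstein gives $(m+n)\exp(-c|B_{k\ell}|\epsilon_2^2/(L_1^2+L_1\epsilon_2))$; the constraint $\epsilon_2\le 4L_1$ and the stated batch size again make this at most $\delta/5$, yielding \cref{eq: stoc_err2}. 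A union bound over the five events gives joint probability at least $1-\delta$.

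Finally, I would prove \cref{eq: stoc_err3} on this joint good event by writing $\widehat{G}-G = (\widehat{\nabla}_{11}f-\nabla_{11}f) - [\widehat{\nabla}_{12}f\,\widehat{\nabla}_{22}f^{-1}\widehat{\nabla}_{21}f - \nabla_{12}f\,\nabla_{22}f^{-1}\nabla_{21}f]$, and expanding the bracketed Schur term via the three-term telescope $\widehat{\nabla}_{12}f\,\widehat{\nabla}_{22}f^{-1}\widehat{\nabla}_{21}f - \nabla_{12}f\,\nabla_{22}f^{-1}\nabla_{21}f = (\widehat{\nabla}_{12}f - \nabla_{12}f)\widehat{\nabla}_{22}f^{-1}\widehat{\nabla}_{21}f + \nabla_{12}f(\widehat{\nabla}_{22}f^{-1}-\nabla_{22}f^{-1})\widehat{\nabla}_{21}f + \nabla_{12}f\,\nabla_{22}f^{-1}(\widehat{\nabla}_{21}f-\nabla_{21}f)$, in combination with the resolvent identity $\widehat{\nabla}_{22}f^{-1}-\nabla_{22}f^{-1} = -\widehat{\nabla}_{22}f^{-1}(\widehat{\nabla}_{22}f-\nabla_{22}f)\nabla_{22}f^{-1}$. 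Plugging in $\|\nabla_{k\ell}f\|\le L_1$, $\|\widehat{\nabla}_{k\ell}f\|\le L_1+\epsilon_2$, $\|\widehat{\nabla}_{22}f^{-1}\|,\|\nabla_{22}f^{-1}\|\le 1/\mu$, and collecting powers of $\kappa=L_1/\mu$, the four resulting error contributions consolidate to $(\kappa+1)^2\epsilon_2$. The main obstacle is this last step: the Schur expansion couples all four Jacobian-block errors through $\widehat{\nabla}_{22}f^{-1}$, so one must exploit the deterministic lower bound $-\widehat{\nabla}_{22}f\succeq \mu I$ (which removes any need for $\epsilon_2$ to be small relative to $\mu$) and then verify that the telescoping terms collapse to the clean $(\kappa+1)^2$ constant; Steps 1 and 2 are otherwise a routine invocation of matrix concentration.
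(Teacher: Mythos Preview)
Your approach is essentially identical to the paper's: matrix-Bernstein concentration on the five sub-sampled blocks with a union bound over $\delta/5$ each, followed by the three-term telescope plus resolvent identity on the Schur complement. The paper cites the concentration step as Lemmas~6 and~8 of \cite{kohler2017}, but the content is exactly the matrix Bernstein argument you sketch.

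There is one small slip in the final step. You bound $\|\widehat{\nabla}_{k\ell}f\|\le L_1+\epsilon_2$ via the triangle inequality, but with that choice the four telescoped terms sum to
\[
\epsilon_2\Big[1+\tfrac{L_1+\epsilon_2}{\mu}+\tfrac{L_1(L_1+\epsilon_2)}{\mu^2}+\tfrac{L_1}{\mu}\Big]
=(\kappa+1)^2\epsilon_2+\tfrac{(\kappa+1)\epsilon_2^2}{\mu},
\]
which does not collapse to the clean $(\kappa+1)^2\epsilon_2$ constant in \cref{eq: stoc_err3}. The fix is the same observation you already made for $\widehat{\nabla}_{22}f^{-1}$: since each $\nabla_{k\ell}f_i$ has spectral norm at most $L_1$, the sample average satisfies $\|\widehat{\nabla}_{k\ell}f\|\le L_1$ \emph{deterministically}. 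The paper uses exactly this (via Lemma~\ref{lemma_HessBound}), and with it the four terms are $\epsilon_2,\ \kappa\epsilon_2,\ \kappa^2\epsilon_2,\ \kappa\epsilon_2$, summing to $(\kappa+1)^2\epsilon_2$ on the nose.
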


Therefore, by choosing proper batch sizes, the inexactness of the stochastic gradient, Hessian and Hessian estimators can be controlled within a desired range. From this perspective, stochastic Cubic-LocalMinimax can be viewed as an inexact version of the Cubic-LocalMinimax algorithm. 

To characterize the convergence and sample complexity of stochastic Cubic-LocalMinimax, we adopt an adaptive inexactness criterion for the sub-sampling scheme. Specifically, we choose time-varying batch sizes in a way such that the gradient and Hessian inexactness in iteration $t$ are proportional to the previous increment, i.e., $\epsilon_1(t)\propto \|s_t\|^2, \epsilon_2(t)\propto \|s_t\|$. Such an
adaptive inexact criterion has been justified in the cubic regularization literature \cite{wang2018note,wang2019stochastic} with the following advantages: 1) it is adapted to the optimization increment and hence leads to reduced batch sizes when the increment is large in the early iterations; 2) it makes the batch size scheduling scheme in \Cref{lemma: batch} practical, as the batch sizes in iteration $t$ now depend on the increment $\|s_t\|$ obtained in the previous iteration $t-1$. We also note that since the sub-sampling scheme is adapted to the previous increment, the output rule of \Cref{algo: cubic-minimax-stoc} is designed to control the value of both the current and the previous increments. This termination rule is critical to bound the adapted gradient and Hessian inexactness in the analysis. 

We obtain the following global convergence and sample complexity result of stochastic Cubic-LocalMinimax.

\begin{restatable}[Convergence and sample complexity]{thm}{thmstoc}\label{thm: 5}
    Let Assumption \ref{assum: fi} and \Cref{thm: SGA} hold. For any $0<\epsilon\le \frac{L_1\sqrt{33L_{\Phi}}}{L_G}$, choose $\epsilon'=\frac{\epsilon}{\sqrt{33L_{\Phi}}}$, $\eta_x\le \frac{1}{55L_{\Phi}}$, $T\ge \frac{\Phi(x_0)-\Phi^*+8L_{\Phi}\epsilon'^2}{3L_{\Phi}\epsilon'^3}$ and $N_t\ge\mathcal{O}\big(\frac{L_0\ln(1/\delta)+L_0^2}{\kappa^{-2}(L_{\Phi}^2\|s_t\|^4+\epsilon^4)\wedge L_1^2(\|s_t\|^2+\epsilon^2/L_{\Phi})}\big)$. Moreover, in iteration $t$, choose the batch sizes according to \cref{eq: b1,eq: b2} with the inexactness given by
    \begin{align}
        \epsilon_1(t)=&\frac{L_{\Phi}}{2}\Big(\|s_t\|^2+\frac{\epsilon^2}{33L_{\Phi}}\Big)\wedge 2L_0, \quad \epsilon_2(t)=\frac{L_{\Phi}}{2(\kappa+1)^2}\Big(\|s_t\|+\frac{\epsilon}{\sqrt{33L_{\Phi}}}\Big)\wedge 4L_1. \nonumber
\end{align}
    Then, the output of Stochastic Cubic-LocalMinimax satisfies
    \begin{align}
       \mu(x_{T'}) \le \epsilon. \label{eq: gH_rate2}
    \end{align}
    Consequently, the total number of cubic iterations satisfies $T'\le \mathcal{O}(\sqrt{L_2}\kappa^{1.5}\epsilon^{-3})$, the total number of queried gradient samples satisfies $\sum_{t'=0}^{T'}\big(N_t+|B_1(t)|\big)\le\mathcal{O}\Big(\frac{L_0^2\kappa^{3.5}\sqrt{L_2}}{\epsilon^7}\ln\frac{m}{\delta}\Big)$, and the total number of queried Hessian samples satisfies  $\sum_{t=0}^{T'-1}\sum_{k=1}^2\sum_{\ell=1}^2|B_{k,\ell}(t)|\le \mathcal{O}\Big( \frac{L_1^2\kappa^{2.5}}{\sqrt{L_2}\epsilon^5}\ln\frac{m+n}{\delta}\Big)$. 
\end{restatable}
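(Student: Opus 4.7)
The plan is to reduce Theorem \ref{thm: 5} to the deterministic potential-decrease machinery of Proposition \ref{prop: lyapunov} by showing that the stochastic surrogates $\widehat{\nabla}_1 f(x_t,y_{t+1})$ and $\widehat{G}(x_t,y_{t+1})$ used in the cubic subproblem still satisfy the key inexactness bounds \eqref{eq: grad_goal}--\eqref{eq: Hess_goal}, albeit in a high-probability sense. First, I would decompose each approximation error into an \emph{inner-loop error} coming from $y_{t+1}\neq y^*(x_t)$ and a \emph{sampling error} coming from mini-batching:
\begin{align*}
\|\widehat{\nabla}_1 f(x_t,y_{t+1})-\nabla\Phi(x_t)\|
&\le \|\widehat{\nabla}_1 f(x_t,y_{t+1})-\nabla_1 f(x_t,y_{t+1})\| + L_1\|y_{t+1}-y^*(x_t)\|,\\
\|\widehat{G}(x_t,y_{t+1})-\nabla^2\Phi(x_t)\|
&\le \|\widehat{G}(x_t,y_{t+1})-G(x_t,y_{t+1})\| + L_G\|y_{t+1}-y^*(x_t)\|,
\end{align*}
using Propositions \ref{prop_1} and \ref{prop_Phiystar} for the second summands. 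Lemma \ref{lemma: batch} with $\epsilon_1(t), \epsilon_2(t)$ as prescribed controls the sampling parts by roughly $\tfrac{L_\Phi}{2}(\|s_t\|^2+\epsilon'^2)$ and $\tfrac{L_\Phi}{2}(\|s_t\|+\epsilon')$ respectively with failure probability $\delta$ per iteration.

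Next I would bound $\|y_{t+1}-y^*(x_t)\|$ for the averaged SGA output. Invoking the high-probability last-iterate / Polyak-averaged SGA rate for $\mu$-strongly concave objectives (the concurrent Lemma \ref{lemma: SGA} the authors cite, which gives an $\mathcal{O}(1/(\mu N_t))$ rate up to a $\ln(1/\delta)$ factor), the stated choice of $N_t$ forces
\[
L_1\|y_{t+1}-y^*(x_t)\| \le \tfrac{L_\Phi}{2}(\|s_t\|^2+\epsilon'^2), \qquad L_G\|y_{t+1}-y^*(x_t)\| \le \tfrac{L_\Phi}{2}(\|s_t\|+\epsilon'),
\]
with probability at least $1-\delta$. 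Combining the two contributions matches \eqref{eq: grad_goal}--\eqref{eq: Hess_goal} with $\alpha=\beta=L_\Phi$, which are exactly the choices behind Proposition \ref{prop: lyapunov}. A union bound over at most $T$ outer iterations will absorb the failure probabilities (the $\delta$ in the final statement is the per-iteration one; inflation to $\delta/T$ only enters logarithmically into $N_t$ and the batch sizes).

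Given the inexactness bounds hold along the trajectory, the potential function $H_t = \Phi(x_t)+(L_\Phi+2\alpha+3\beta)\|s_t\|^3$ from Proposition \ref{prop: lyapunov} decreases as in \eqref{eq: potential}, so the proof of Theorem \ref{thm: 1b} transfers verbatim: telescoping gives $\sum_{t=0}^{T-1}(\|s_{t+1}\|^3+\|s_t\|^3) \le \tfrac{H_0-\Phi^*}{3L_\Phi}$, the termination rule $\|s_{t-1}\|\vee\|s_t\|\le\epsilon'$ must trigger within $T'\le\mathcal{O}(\sqrt{L_2}\kappa^{1.5}\epsilon^{-3})$ outer steps, and converting $\|s_{T'}\|\le\epsilon'$ into a bound on $\mu(x_{T'})$ via the first-order optimality of the cubic subproblem (with the stochastic gradient/Hessian errors plugged in through \eqref{eq: grad_goal}--\eqref{eq: Hess_goal}) yields $\mu(x_{T'})\le\epsilon$.

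Finally, for the sample complexity I would exploit that the batch sizes are \emph{adaptive} to $\|s_t\|$: substituting the choices of $\epsilon_1(t),\epsilon_2(t)$ into \eqref{eq: b1}--\eqref{eq: b2} gives $|B_1(t)|=\mathcal{O}\big(L_0^2/(\|s_t\|^2+\epsilon'^2)^2\,\ln(m/\delta)\big)$ and $|B_{k\ell}(t)|=\mathcal{O}\big(L_1^2\kappa^4/(\|s_t\|+\epsilon')^2\,\ln((m+n)/\delta)\big)$; summing over $t$ uses the crude bound $\|s_t\|^2+\epsilon'^2 \ge \epsilon'^2$ together with $T'\le\mathcal{O}(\sqrt{L_2}\kappa^{1.5}\epsilon^{-3})$, yielding the gradient sample bound $\widetilde{\mathcal{O}}(L_0^2\kappa^{3.5}\sqrt{L_2}\epsilon^{-7})$ and Jacobian sample bound $\widetilde{\mathcal{O}}(L_1^2\kappa^{2.5}L_2^{-1/2}\epsilon^{-5})$ (the slower growth for Jacobians reflects the $\epsilon^{-2}$ rather than $\epsilon^{-4}$ dependence on the stationarity threshold). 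The $N_t$ contribution is absorbed into the gradient sample count because the chosen $N_t$ is $\mathcal{O}(L_0^2\ln(1/\delta))$ times the same $(\|s_t\|^2+\epsilon'^2)^{-2}$ adaptive factor. The main technical obstacle I anticipate is the bookkeeping in verifying that the adaptive inexactness choices $\epsilon_1(t)\propto\|s_t\|^2$ and $\epsilon_2(t)\propto\|s_t\|$ exactly plug into the $\alpha,\beta$ slots of Proposition \ref{prop: lyapunov} — in particular the output rule $\|s_{t-1}\|\vee\|s_t\|\le\epsilon'$ is essential because $|B_{k\ell}(t)|$ depends on $\|s_t\|$ of the \emph{previous} step, so without controlling both consecutive increments one cannot simultaneously certify approximate second-order stationarity and bound the final batch size.
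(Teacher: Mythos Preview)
Your proposal is correct and follows essentially the same approach as the paper's proof: decompose each error into a sampling part (controlled by Lemma~\ref{lemma: batch} with the prescribed $\epsilon_1(t),\epsilon_2(t)$) and an inner-loop part (controlled by the SGA rate in \Cref{lemma: SGA} with the prescribed $N_t$), verify that the combined errors satisfy the inexactness conditions \eqref{eq: grad_goal_noKL}--\eqref{eq: Hess_goal_noKL} with $\alpha=\beta=L_\Phi$ and $\epsilon'=\epsilon/\sqrt{33L_\Phi}$, and then invoke the deterministic potential-decrease machinery (Lemmas~\ref{lemma_ifErrSmall_noKL}--\ref{lemma_general_rate}) to obtain both $\mu(x_{T'})\le\epsilon$ and the $T'$ bound; the sample complexities follow by the same crude lower bound $\|s_t\|^2+\epsilon'^2\ge\epsilon'^2$ and multiplication by $T'$. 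Your remark about the union bound over outer iterations is a point the paper glosses over, and your identification of why the two-step output rule matters is on target.
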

Therefore, under adaptive sub-sampling, the induced gradient and Hessian inexactness $\epsilon_1(t), \epsilon_2(t)$ are properly controlled so that the iteration complexity $T'$ of stochastic Cubic-LocalMinimax remains in the same level as that of Cubic-LocalMinimax. Moreover, as opposed to the sample complexity of Cubic-LocalMinimax that scales linearly with regard to the data size $N$, the sample complexity of stochastic Cubic-LocalMinimax is independent of $N$. For example, by comparing the 
more expensive Hessian sample complexity between Cubic-LocalMinimax and stochastic Cubic-LocalMinimax, we conclude that stochastic sub-sampling helps reduce the sample complexity so long as the data size is large, i.e., $N\ge \widetilde{\mathcal{O}}\Big(\frac{L_1^2\kappa}{L_2\epsilon^2}\Big)$.

\section{Experiments} 

In this section, we test the numerical performance of Cubic-LocalMinimax and compare it with %the performance of the standard gradient-descent-ascent (GDA) algorithm 
\red{existing algorithms} in solving a synthetic minimax optimization problem and an adversarial deep learning problem. \red{All the experiments are implemented on Google Colab with Intel(R) Xeon(R) CPU (12 cores, 2.20GHz), A100 GPU of cuda 11.2, and 83.48 GB memory.}

\subsection{Synthetic Minimax Optimization Problem}
We consider the following finite-sum minimax optimization problem with parameters $x=[x_1, x_2, x_3]\in \mathbb{R}^3$ and $y = [y_1, y_2] \in \mathbb{R}^2$.
\begin{align}
    \min\limits_{x \in \mathbb{R}^3} \max\limits_{{y} \in \mathbb{R}^2} f({x,y}) := \frac{1}{N}\sum^N_{i=1} f_i({x,y}),~\text{with}~~ f_i({x,y}) = w(x_3) - \frac{y_1^2}{40} + A_i x_1 y_1 - \frac{5y_2^2}{2} + B_i x_2 y_2, \label{eq: synthetic}
\end{align}
where $A_i, B_i>0$ are independently drawn from a uniform distribution over the interval $[0.5, 1.5]$, $N=1000$ is the total number of samples, and the function $w(\cdot)$ is a W-shaped nonconvex function whose exact form is presented in Appendix \ref{sec: app: syn}. In this setting, each function $f_i$ is nonconvex-strongly-concave.

We apply our stochastic Cubic-LocalMinimax algorithm to solve the above synthetic problem. We choose the initialization point to be ${x}=[0.1; 0.1; 1], {y}=[1; 1]$ and set the batch size {$|B_1(t)|=|B_{11}(t)|=|B_{12}(t)|=|B_{21}(t)|=|B_{22}(t)|$} to be $20, 100, \red{1000}$, respectively. We choose the number of gradient ascent steps $N_t=10$ for each outer loop, the strong concavity constant $\mu=1$, and choose the learning rate $\eta_x=0.01$. To solve the cubic subproblem, we use the standard gradient descent with learning rate $0.01$, as the gradient and Hessian of $\Phi(x)$ can be analytically computed for this synthetic problem. 
%\red{Zhengyang please add all hyperparameters:  what are the learning rates that you use, how many iterations you run for the inner solver, gradient ascent steps, etc. Also, add each figure separately, do not put them into one PNG file. Figure y-axis use $\Phi(x)$.} 
\begin{figure}[H]
    \vspace{-4mm}
	\centering
	\begin{minipage}{0.44\linewidth}
		\centering
		\includegraphics[width=1\linewidth]{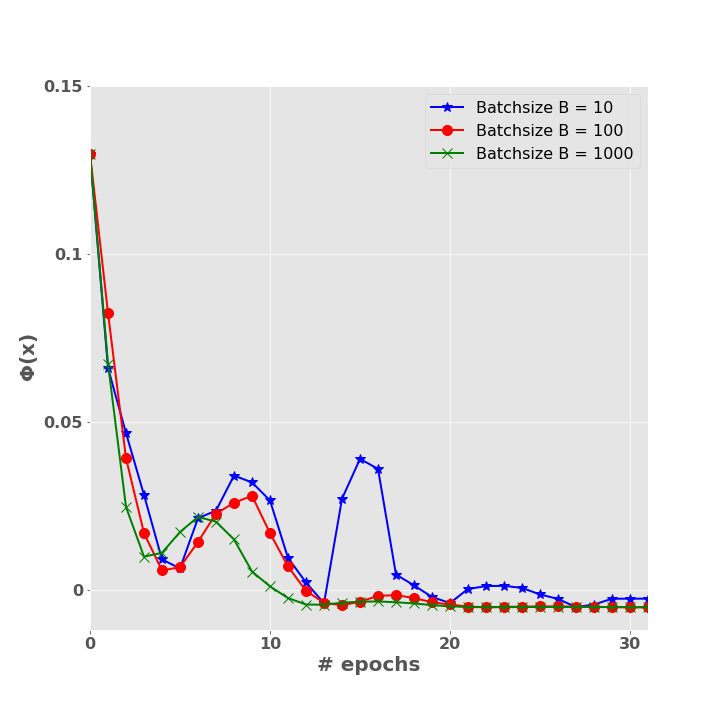}
	\end{minipage}
	%\qquad
	\begin{minipage}{0.44\linewidth}
		\centering
		\includegraphics[width=1\linewidth]{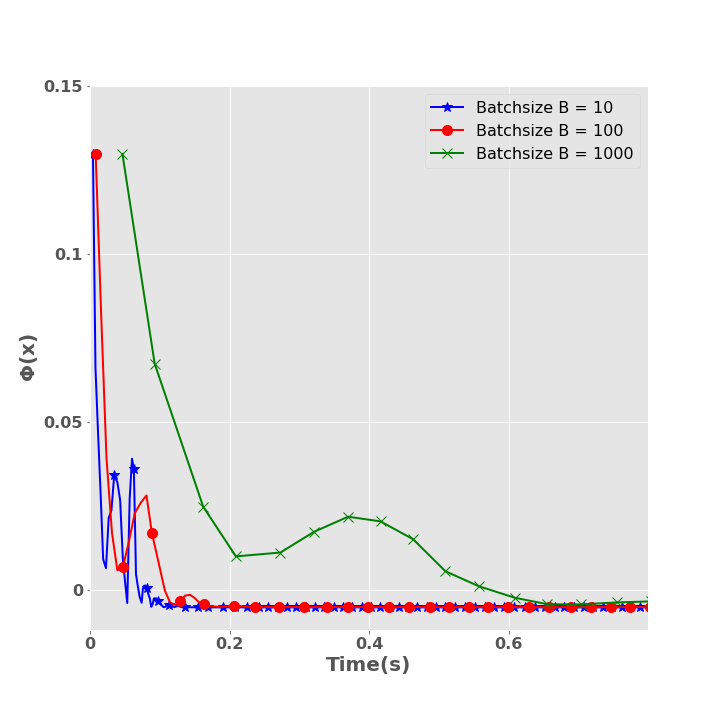}
	\end{minipage}
	 \vspace{-2mm}
	\caption{\red{Performance at each epoch} (left) and time complexity (right) of stochastic Cubic-LocalMinimax under different batch sizes. The $y$-axis denotes the function value of $\Phi(x)= \max_y f(x,y)$ that we aim to minimize.}
	\label{result11}
	\vspace{-2mm}
\end{figure}

Figure \ref{result11} shows the \red{performance at each epoch (each update of $x$ is considered as an epoch)} (left figure) and time complexity (right figure) of stochastic Cubic-LocalMinimax under different batch sizes. Here, the y-axis denotes the function value of $\Phi(x)= \max_y f(x,y)$, which we aim to minimize. It can be seen that stochastic Cubic-LocalMinimax \red{takes more epochs but much less time to converge} %converges faster 
under a smaller batch size. This is because the synthetic minimax problem involves relatively small noise so that the stochastic gradients computed over a small batch of samples are sufficiently accurate. 

%we can obviously see that stochastic Cubic-LocalMinimax can have a good performance on a small mini batch, which means we could do descent on a small mini-step and have a faster converge speed and lower total sample size.

We further compare the performance of stochastic Cubic-LocalMinimax with that of the standard stochastic GDA \red{and second order algorithms including GDN/FR/TGDA/CN \cite{zhang2021newtontype} and Minimax Cubic-Newton (MCN) algorithm \cite{luo2022finding}. All the algorithms use} batch size $100$ and the fixed initialization point ${x}=[0; 0; 1], {y}=[1; 1]$. For both \red{stochastic Cubic-LocalMinimax and stochastic GDA}, we choose the gradient ascent steps $N_t = 10$ and $\mu = 1$. %\red{(Are other implementing details the same as Figure \ref{result11}, e.g. still $\eta_x=0.01$ and exact GD for cubic solver?? If so, we could simply say something like ``in the same way'')}. 
We do one step of cubic descent and one step of gradient descent in each algorithm, respectively. The difference between the two algorithms is that the cubic solver in Algorithm \ref{algo: cubic-minimax-stoc} is replaced with one gradient descent step, and both cubic solver and gradient descent step have the same learning rate $0.01$. \red{The implementation details of GDN/TGDA/FR/CN algorithms are a bit involved and we refer them to Appendix \ref{supp:gdn}. For the MCN algorithm (Algorithm 2 of \cite{luo2022finding}), we apply 10 Nesterov's accelerated gradient ascent steps on $y$
with learning rate $\eta=0.5$ and momentum coefficient $\theta = 0.5$.} %we use the same method provided by its code that using the equation solving method instead of gradient descent to compute the min point of cubic subproblem. %\red{(Could we say that the difference between the two algorithms is that the cubic solver in Algorithm \ref{algo: cubic-minimax-stoc} is replaced with one gradient descent step $x_{t+1}=x_t-\eta \widehat{\nabla}_1 f(x_t,y_{t+1})$ in stochastic GDA?? What learning rate $\eta$ did you choose for stochastic GDA??)} 

Figure \ref{result21} shows the comparison of the \red{performance at each epoch} and time complexity 
for both algorithms. It can be seen that our stochastic Cubic-LocalMinimax \red{has comparable performance to CN and MCN, but} converges faster than \red{all the other algorithms} under both \red{number of epochs} and time complexity measures, demonstrating the effectiveness of both the cubic regularization approach and our proposed GDA-Cubic Solver.
\begin{figure}[H]
     \vspace{-2mm}
	\centering
	\begin{minipage}{0.44\linewidth}
		\centering
		\includegraphics[width=1\linewidth]{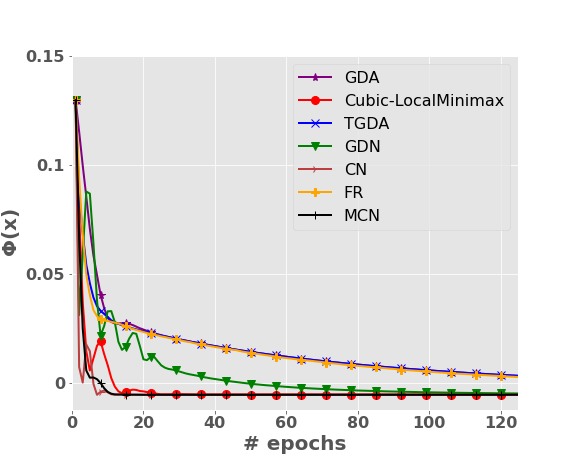}
	\end{minipage}
	%\qquad
	\begin{minipage}{0.44\linewidth}
		\centering
		\includegraphics[width=1\linewidth]{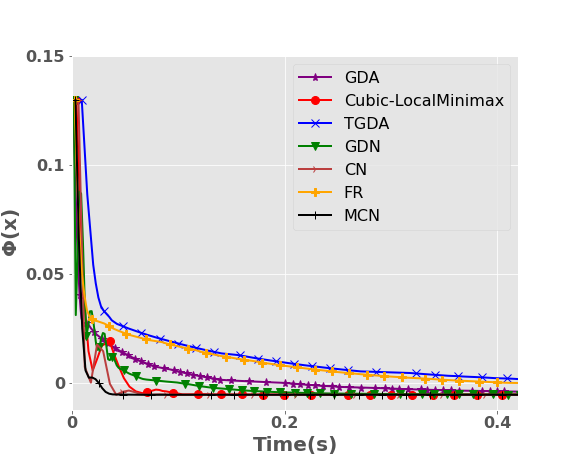}
		%\caption{$\Phi(x)$ comparison between GDA and Cubic-LocalMinimax on time complexity }
		%\label{result22}
	\end{minipage}
	 \vspace{-2mm}
	\caption{Comparison of all the algorithms \red{for synthetic experiment.}%on both \red{performance at each epoch and time complexity}.
 }
		\label{result21}
	 \vspace{-2mm}
\end{figure}

%\red{some details: the algorithm has the same structure with the deep learning experiments, all the learning rate (or learning rate) $\eta=0.01$, (correspond to $\alpha_x$ and $\alpha_y$) {\color{blue} The only difference from adversarial deep learning is learning rate in outer iteration?? 0.01 is also used for gradient ascent on $y$?? (yes)} but in mcn, we set 10 times AGD with $\theta = 0.5$ and $\eta=0.5$, we use the same method provided by its code that using the equation solving method instead of gradient descent to compute the min point of cubic subproblem. Also, we change the initial points from $(x,y) = [0,0,1],[1,1] to [0.1,0.1,1],[1,1]$ {\color{blue} other hyperparameters of ineaxct MCN like $c_k$, $T$, $K_t$, $\epsilon$??}, here we use mcn, they do not consider this hyperparameters I think}

\subsection{Adversarial Deep Learning}
We further compare the stochastic Cubic-LocalMinimax with the classical GDA \red{and second order algorithms including GDN/FR/TGDA/CN \cite{zhang2021newtontype} and Inexact Minimax Cubic-Newton (IMCN) algorithm \cite{luo2022finding}} in the application of adversarial deep learning, {which aims to train an adversarially-robust convolutional neural network model (see Section \ref{subsec_nn} for the detail of the neural network structure) by solving the following minimax optimization problem using the MNIST dataset with 50k training samples and 10k test samples.} %\red{We compare these algorithms in two metrics, the estimate loss and robust test accuracy, both we have proved our Cubic Localminimax algorithm has a faster converge speed and a higher robust test accuracy.}%{\color{blue}(Ziyi feels this could be commented out as later we will talk about these metrics in figures??)}
%on the example of a convolutional neural network. All these algorithms have a single-loop structure.
\begin{align}
    \min\limits_{\theta} \max\limits_{ \{ \xi_i \}_{i=1}^n} \frac{1}{n} \sum_{i=1}^n 
    \left[ \ell (h_{\theta} (\xi_i, y_i) ) - \lambda \Vert \xi_i - x_i \Vert^2 \right],
\end{align}
where $n = 50k$ is the number of training samples, $\theta$ is the parameter of the neural network $h_{\theta}$, $(x_i, y_i)$ corresponds to the $i$-th image and label respectively,  $\xi_i$ refers to the adversarial sample corresponding to $x_i$, and we choose cross-entropy loss function as $\ell$ and penalty coefficient $\lambda=2.0$. 

%The whole experiment is implemented on Python 3.8.5, and uses Pytorch 1.12.1. to establish the convolutional neural network and learn its parameter $\theta$. 
When implementing stochastic Cubic-LocalMinimax (Algorithm \ref{algo: cubic-minimax-stoc}), we choose batchsize $|B_1(t)|=|B_{11}(t)|=|B_{12}(t)|=|B_{21}(t)|=|B_{22}(t)|=512$ and implement $N_t=20$ gradient ascent steps with learning rate 0.1. For the cubic solver, we implement Algorithm \ref{algo: Cubic_solve} with $\eta_v=1$ when $\|g\|>10$ until \red{$K=30$} iterations is reached or $\|H_{22}w_k-\frac{H_{21}g}{\|g\|}\|<10^\red{-3}$ in the update rule \eqref{w_update}; Otherwise, we choose $\sigma=0$ (i.e. no random perturbation), \red{$K=30$}, $\eta_x=0.01$, $\eta_s=\red{0.002}$, $\eta_v=0.1$ and $N_k'=5$ until either $K=\red{30}$ iterations is reached or the gradient terms of both $s_k'$ and $v_k$ are sufficiently small (i.e., $\max\big(\|H_{12}^{\top}s_k'+H_{22}v_{k,\ell}\|, \big\|g+H_{11}s_k'+H_{12}v_{k}+\frac{\|s_k'\|}{2\eta_x}s_k'\!\big\|\big)<10^\red{-3}$ in the update rules \eqref{v_update} \& \eqref{s_update}). When implementing the classical GDA algorithm, we replace the cubic solver of stochastic Cubic-LocalMinimax with \red{one} stochastic gradient descent step to update $\theta$ using also batchsize 512 and learning rate 0.01, while the rest hyperparameters are not changed. \red{The implementation details of GDN/TGDA/FR/CN algorithms are a bit involved and we refer them to Appendix \ref{supp:gdn}. For IMCN algorithm  (Algorithm 3 of \cite{luo2022finding}), we use $\ell=1$, $\mu=0.4$ and the small gradient case of cubic-solver with learning rate 0.002 and early termination rule $\|\nabla \phi(s)\|<0.001$ where $\phi$ is the cubic-regularization objective function. The Chebyshev polynomial used to compute Hessian-vector product is truncated to $K'=5$ terms.}
%\red{some IMCN details, we set l=1 and mu=0.4, the max term of cheb-poly is 5, learning rate for s gradient descent is 0.002, also with an early termination rule <0.001}
%We randomly seperate the whole MNIST dataset into one training set and one test set, the lengths of them are 50000 and 10000 respectively. Each time we randomly select 2000 samples from the test set to verify the robust test accuracy and randomly select 2000 samples from the train set to compute the estimate loss.
\begin{figure}[H]%[htbp]
%\vspace{-6mm}
	\centering
	\begin{minipage}{0.44\linewidth}
%	\vspace{-3mm}
		\centering
		\includegraphics[width=1\linewidth]{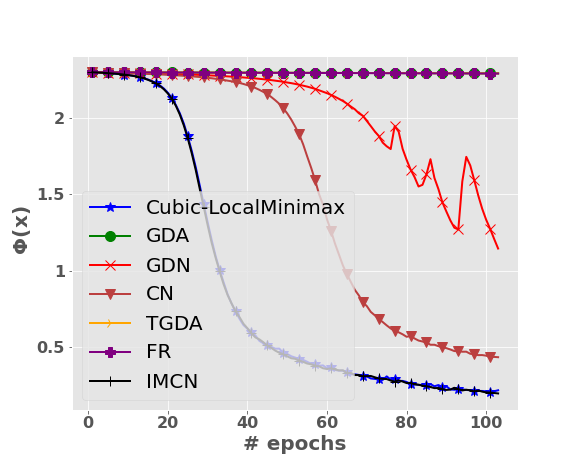}
		\caption{Comparison of $\Phi(x)$ \red{at each epoch for adversarial deep learning. GDA, TGDA and FR conincide.}}%between \red{the six algorithms}}
		\label{result31}
	\end{minipage}
	\hspace{4mm}
	\begin{minipage}{0.44\linewidth}
		\centering
  \includegraphics[width=1\linewidth]{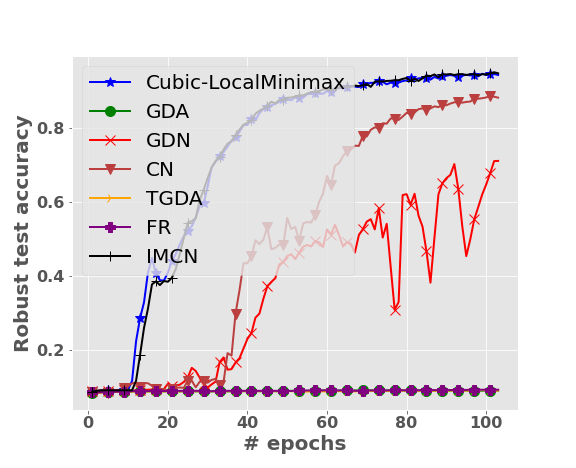}
		\caption{Comparison of robust test accuracy \red{at each epoch for adversarial deep learning. GDA, TGDA and FR conincide.}}%between \red{the six algorithms}.}
		\label{result32}
	\end{minipage}
%	\vspace{-6mm}
\end{figure}

\begin{figure}[H]
	\centering
	\begin{minipage}{0.44\linewidth}
%	\vspace{-3mm}
		\centering
		\includegraphics[width=0.95\linewidth]{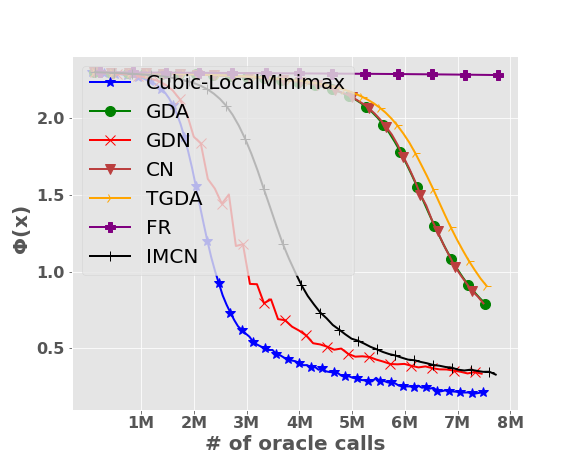}
%		\vspace{-2mm}
		\caption{Comparison of $\Phi(x)$ \red{per oracle call for adversarial deep learning.}}% between the six algorithms.}
		\label{result41}
	\end{minipage}
	\hspace{4mm}
	\begin{minipage}{0.44\linewidth}
		\centering
		\includegraphics[width=0.95\linewidth]{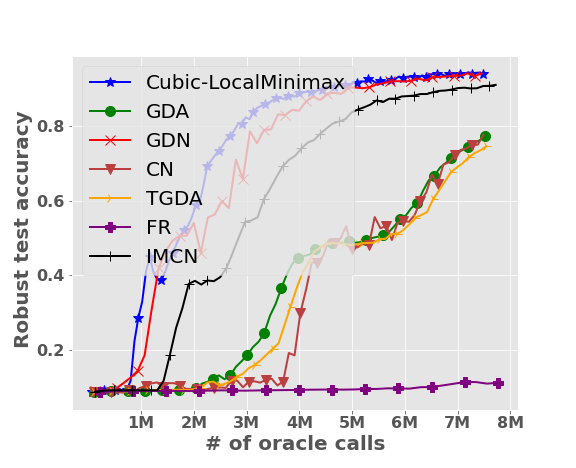}
%		\vspace{-2mm}
		\caption{Comparison of robust test accuracy \red{per oracle call for adversarial deep learning}.}% between the six algorithms.}
		\label{result42}
	\end{minipage}
% \vspace{-2mm}
\end{figure}

\begin{figure}[H]
%\vspace{-4mm}
	\centering
	\begin{minipage}{0.44\linewidth}
%	\vspace{-3mm}
		\centering
		\includegraphics[width=0.95\linewidth]{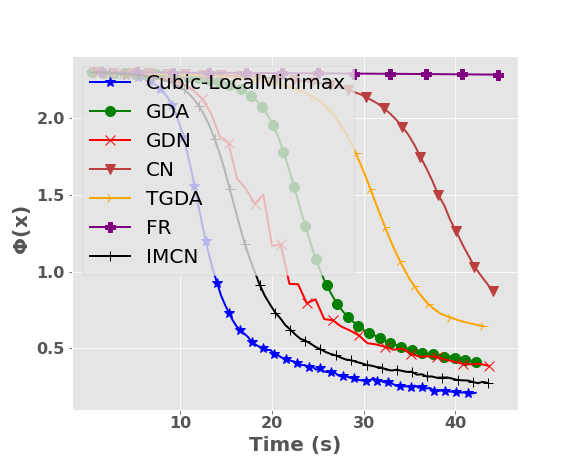}
%		\vspace{-2mm}
		\caption{Comparison of $\Phi(x)$ on \red{time complexity for adversarial deep learning.}}%oracle calls between the six algorithms.}
		\label{result43}
	\end{minipage}
	\hspace{4mm}
	\begin{minipage}{0.44\linewidth}
		\centering
		\includegraphics[width=0.95\linewidth]{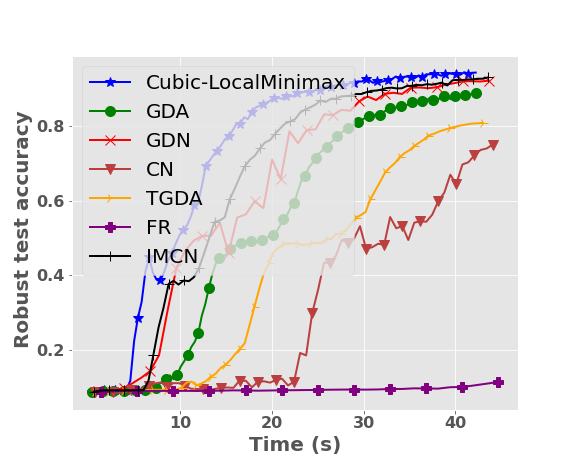}
%		\vspace{-2mm}
		\caption{Comparison of robust test accuracy \red{on time complexity for adversarial deep learning.}}%oracle calls between the six algorithms.}
		\label{result44}
	\end{minipage}
%\vspace{-2mm}
\end{figure}

Figure \ref{result31} compares the objective function value $\Phi(x)$ at each epoch of both algorithms, which is estimated via \red{40} gradient ascent steps with  learning rate 0.1 to obtain the approximate maximizer $\{\xi_i\}_{i=1}^n$. It can be seen that our stochastic Cubic-LocalMinimax algorithm \red{and IMCN are comparable and both algorithms have} significantly faster convergence \red{in terms of epoch than the other algorithms}. %a manifest superiority on the convergence over GDA and is significantly faster. 
Figure \ref{result32} further demonstrates the advantage of \red{stochastic Cubic-LocalMinimax algorithm and IMCN} in robust test accuracy which is estimated on test samples. It can be seen that the robust models trained by stochastic Cubic-LocalMinimax \red{and IMCN are} also more robust in generalization. 
%also achieves better robustness.
%{\color{blue}Could you enlarge the font of the words in legends, axes and numbers in axes, to make them legible?}

\red{Figure \ref{result41} and \ref{result42} compare the objective function value $\Phi(x)$ and robust test accuracy respectively per oracle call (i.e., an evaluation of gradient or Hessian-vector product) of all the algorithms. Similarly, Figures \ref{result43} and \ref{result44} compare $\Phi(x)$ and robust test accuracy respectively on time complexity of all the algorithms. It can be seen from these figures that our stochastic Cubic-LocalMinimax algorithm takes significantly fewer oracle calls and less time than all the other algorithms to converge in both objective function and robust test accuracy. 
%a manifest superiority on the convergence over GDA and is significantly faster. Figure \ref{result42} further demonstrates the advantage of our Cubic-LocalMinimax algorithm in robust test accuracy. It can be seen that the robust model trained by stochastic Cubic-LocalMinimax is also more robust in generalization \red{faster and more stable}. 
%also achieves better robustness. {\color{blue} From the figures we can see that our Cubic-LocalMinimax algorithm has almost the same performance with IMCN, but our cubic-solver is much easier and faster than them.}
}

\section{Conclusion}
We developed cubic regularization type algorithms that globally converge to local minimax points in nonconvex-strongly-concave minimax optimization. These algorithms include the basic Cubic-LocalMinimax, Inexact Cubic-LocalMinimax with our proposed GDA-based cubic solver and stochastic Cubic-LocalMinimax for large-scale minimax optimization. By designing and leveraging an intrinsic potential function that monotonically decreases over the iterations, we have obtained the computation or sample complexities required by each algorithm to achieve an $\epsilon$-approximate local-minimax point. Experimental results demonstrate faster convergence of our stochastic Cubic-LocalMinimax than the standard stochastic GDA algorithm. A future direction is to extend the proposed algorithms to bilevel optimization, which is a generalization of minimax optimization. 

%\red{Pls add conclusion here.}
\newpage
{%\small
	\bibliographystyle{apalike}
	\bibliography{./ref}
}

\newpage
\onecolumn
% %\section*{Supplementary material}
\appendix

\section{Supporting Lemmas}
We first prove the following auxiliary lemma that bounds the spectral norm of the Hessian matrices. 
\begin{lemma}\label{lemma_HessBound}
    Let \Cref{assum: fcubic} hold. Then, for any $x\in \mathbb{R}^m$ and $y\in \mathbb{R}^n$, the Hessian matrices of $f(x,y)$ {and $G(x,y)=\big[\nabla_{11} f - \nabla_{12} f (\nabla_{22} f)^{-1} \nabla_{21} f\big](x,y)$} satisfy the following bounds.
    \begin{align}
        &\|[\nabla_{22} f(x,y)]^{-1}\|\le \mu^{-1}, \label{HessInv_up}\\
        &\|\nabla_{12} f(x,y)\|=\|\nabla_{21} f(x,y)\|\le L_1, \label{Hess_below}\\
        &{\|\nabla_{11} f(x,y)\|\le L_1,} \label{Hess_below2}\\
        &{\|G(x,y)\|\le L_1(1+\kappa).}\label{G_up}
    \end{align}
    The same bounds also hold for $\widehat{\nabla}_{11} f$, $\widehat{\nabla}_{12} f$, $\widehat{\nabla}_{21} f$, $\widehat{\nabla}_{22} f$  and $\widehat{G}$ defined in Section \ref{sec: stoc} under \Cref{assum: fi}.
\end{lemma}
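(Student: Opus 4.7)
The plan is to prove the four bounds in order, using only the two structural facts provided by Assumption \ref{assum: f}: (i) $L_1$-smoothness of $f$, which gives $\|\nabla^2 f(x,y)\|\le L_1$ in operator norm; and (ii) $\mu$-strong concavity of $f(x,\cdot)$, which gives $\nabla_{22} f(x,y)\preceq -\mu I$.

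First I will handle \eqref{HessInv_up}. Strong concavity of $f(x,\cdot)$ means $-\nabla_{22} f(x,y)$ is symmetric with smallest eigenvalue at least $\mu$, hence invertible with $\|[\nabla_{22} f(x,y)]^{-1}\|\le \mu^{-1}$. Next, for \eqref{Hess_below} and \eqref{Hess_below2}, the key observation is that the operator norm of any off-diagonal or diagonal block of a matrix is bounded by the operator norm of the full matrix. Concretely, for unit vectors $u\in\mathbb{R}^m$ and $v\in\mathbb{R}^n$, embed them into $\mathbb{R}^{m+n}$ as $(u,0)$ and $(0,v)$; then
\begin{align*}
    |u^{\top}\nabla_{12} f(x,y) v| = |(u,0)^{\top}\nabla^2 f(x,y)(0,v)| \le \|\nabla^2 f(x,y)\| \le L_1,
\end{align*}
and similarly $|u^{\top}\nabla_{11} f(x,y) u|\le L_1$ by taking $(u,0)$ on both sides. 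Symmetry of the full Hessian gives $\nabla_{21} f = (\nabla_{12} f)^{\top}$, so the two mixed blocks share the same spectral norm.

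The bound \eqref{G_up} on $G$ then follows immediately by the triangle inequality and submultiplicativity of the operator norm:
\begin{align*}
    \|G(x,y)\| \le \|\nabla_{11} f(x,y)\| + \|\nabla_{12} f(x,y)\|\cdot\|[\nabla_{22} f(x,y)]^{-1}\|\cdot\|\nabla_{21} f(x,y)\| \le L_1 + \frac{L_1^2}{\mu} = L_1(1+\kappa),
\end{align*}
using $\kappa = L_1/\mu$.

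For the stochastic counterpart under Assumption \ref{assum: fi}, each per-sample function $f_i$ satisfies exactly the same smoothness and strong-concavity bounds, so by the same argument each block $\nabla_{k\ell} f_i$ obeys the corresponding estimate. The sub-sampled estimator $\widehat{\nabla}_{k\ell} f$ is a finite average of such blocks, and by convexity of the operator norm the same bounds $\|\widehat{\nabla}_{12} f\|\le L_1$, $\|\widehat{\nabla}_{11} f\|\le L_1$ carry over. For $\widehat{\nabla}_{22} f$, averaging preserves the inequality $\widehat{\nabla}_{22} f\preceq -\mu I$ (strong concavity of each $f_i(x,\cdot)$ is preserved under averaging), so $\|(\widehat{\nabla}_{22} f)^{-1}\|\le \mu^{-1}$, and the same triangle-inequality calculation gives $\|\widehat{G}\|\le L_1(1+\kappa)$. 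The only step that needs mild care is invertibility of $\widehat{\nabla}_{22} f$, but this is automatic once strong concavity of each sampled $f_i(x,\cdot)$ is noted. No step here looks like a real obstacle; the argument is entirely a consequence of embedding blocks into the full Hessian and using submultiplicativity.
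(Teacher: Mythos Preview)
Your proof is correct and follows essentially the same approach as the paper: strong concavity for \eqref{HessInv_up}, $L_1$-smoothness for the block bounds \eqref{Hess_below}--\eqref{Hess_below2}, and then triangle inequality plus submultiplicativity for \eqref{G_up}. The only cosmetic difference is that the paper derives the block bounds via a directional-derivative limit $\|\nabla_{21} f(x,y)u\| = \lim_{t\to 0}\|\nabla_2 f(x+tu,y)-\nabla_2 f(x,y)\|/|t|\le L_1\|u\|$, whereas you embed into the full Hessian and use $\|\nabla^2 f\|\le L_1$; both are immediate from $L_1$-smoothness.
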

\begin{proof}
    We first prove \cref{HessInv_up}. Consider any $x\in \mathbb{R}^m$ and $y\in \mathbb{R}^n$. By \Cref{assum: fcubic} we know that $f(x,\cdot)$ is $\mu$-strongy concave, which implies that $-\nabla_{22} f(x,y)\succeq \mu I$. Thus, we further conclude that 
    $$\|[\nabla_{22} f(x,y)]^{-1}\|=\lambda_{\max}\big([-\nabla_{22} f(x,y)]^{-1}\big)=\Big(\lambda_{\min}\big(-\nabla_{22} f(x,y)\big)\Big)^{-1}\le \mu^{-1}.$$
    
    %Define the Frobenius norm of a matrix $A=[a_{ij}]$ as $\|A\|_F:=\sqrt{\sum_{ij}a_{ij}^2}$. Then, and $u\in \mathbb{R}^d$ ($d$ is the dimensionality of $\mathbb{R}^n$)
    Next, we prove eq. \eqref{Hess_below}. Consider any $x, u\in \mathbb{R}^m$ and $y\in \mathbb{R}^n$, we have
    \begin{align}
        \|\nabla_{21} f(x,y)u\|=&\Big\|\frac{\partial}{\partial t}\nabla_2 f(x+tu,y)\Big|_{t=0}\Big\|\nonumber\\
        =&\Big\|\lim_{t\to 0} \frac{1}{t}\big[\nabla_2 f(x+tu,y)-\nabla_2 f(x,y)\big]\Big\|\nonumber\\
        =& \lim_{t\to 0} \frac{1}{|t|}\big\|\nabla_2 f(x+tu,y)-\nabla_2 f(x,y)\big\| \nonumber\\
        \le& \lim_{t\to 0} \frac{L_1}{|t|}\big\|tu\big\|=L_1\|u\|,
    \end{align}
    which implies that $\|\nabla_{21} f(x,y)\|\le L_1$. Since $f$ is twice differentiable and has continuous second-order derivative, we have $\nabla_{12} f(x,y)^{\top}=\nabla_{21} f(x,y)$, and hence eq. \eqref{Hess_below} follows. {The proof of eq. \eqref{Hess_below2} is similar.
    
    Finally, eq. \eqref{G_up} can be proved as follows using eqs. \eqref{HessInv_up}\&\eqref{Hess_below}.
    \begin{align}
        \|G(x,y)\|\le \|\nabla_{11} f(x,y)\| + \|\nabla_{12} f(x,y)\| \|\nabla_{22} f^{-1}(x,y)\| \|\nabla_{21} f(x,y)\| \le L_1 + L_1\mu^{-1} L_1 =L_1(1+\kappa). \nonumber
    \end{align}
    The proof is similar for the stochastic minimax optimization problem (Q) in Section \ref{sec: stoc} under \Cref{assum: fi}.
    }
\end{proof}

The following lemma restates Lemma 3 of \cite{wang2018note}\red{, which states the necessary conditions of exact CR solution}. 
\begin{lemma}\label{lemma_CRsol}
    The solution $s_{k+1}$ of the cubic regularization problem in Algorithm \ref{algo: cubic-minimax} satisfies the following conditions, 
    \begin{align}
        \nabla_1 f(x_t,y_{t+1})+G(x_t,y_{t+1})s_{t+1}+\frac{1}{2\eta_x}\|s_{t+1}\|s_{t+1}=&~ \zero, \label{eq: CRcond1}\\
        G(x_t,y_{t+1})+\frac{1}{2\eta_x}\|s_{t+1}\|I\succeq& ~\mathbf{O}, \label{eq: CRcond2}\\
        \nabla_1 f(x_t,y_{t+1})^{\top}s_{t+1}+\frac{1}{2}s_{t+1}^{\top}G(x_t,y_{t+1})s_{t+1} \le& -\frac{1}{4\eta_x}\|s_{t+1}\|^3. \label{eq: CRcond3}
    \end{align}
\end{lemma}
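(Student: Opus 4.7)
The plan is to derive all three conditions from the optimality conditions of the cubic subproblem. Abbreviate $g = \nabla f(x_t, y_{t+1})$ and $H = G(x_t, y_{t+1})$, and let $m(s) = g^{\top} s + \tfrac{1}{2} s^{\top} H s + \tfrac{1}{6\eta_x}\|s\|^3$ denote the cubic model whose minimizer is $s_{t+1}$. Because $m$ is coercive (the cubic term dominates at infinity), a global minimizer exists, and it must be a critical point satisfying the first- and second-order necessary conditions.

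First I would compute $\nabla m(s)$ using $\nabla \|s\|^3 = 3\|s\| s$, which gives $\nabla m(s) = g + H s + \tfrac{1}{2\eta_x}\|s\| s$; setting this to zero at $s = s_{t+1}$ yields \eqref{eq: CRcond1}. For \eqref{eq: CRcond2}, I would compute the Hessian of $m$, namely $\nabla^2 m(s) = H + \tfrac{1}{2\eta_x}(\|s\| I + s s^{\top}/\|s\|)$ for $s \neq 0$, and observe that second-order necessary conditions at the global minimizer yield $\nabla^2 m(s_{t+1}) \succeq \mathbf{O}$. To sharpen this to the statement $H + \tfrac{\|s_{t+1}\|}{2\eta_x} I \succeq \mathbf{O}$, I would invoke the standard Nesterov--Polyak characterization of global minimizers of cubic subproblems: if the stronger condition failed, there would exist an eigenvector $v$ of $H$ with eigenvalue smaller than $-\tfrac{\|s_{t+1}\|}{2\eta_x}$, and one could construct a descent direction by perturbing $s_{t+1}$ along $v$ (with appropriate sign) that strictly decreases $m$, contradicting global optimality. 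This direction-of-negative-curvature argument is the most technical step and is the only place where the "global minimum" (as opposed to "stationary point") nature of $s_{t+1}$ is essential.

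Finally, \eqref{eq: CRcond3} follows by a short algebraic combination of the first two. Taking the inner product of \eqref{eq: CRcond1} with $s_{t+1}$ gives the identity $g^{\top} s_{t+1} + s_{t+1}^{\top} H s_{t+1} = -\tfrac{1}{2\eta_x}\|s_{t+1}\|^3$. Rearranging,
\begin{align*}
g^{\top} s_{t+1} + \tfrac{1}{2} s_{t+1}^{\top} H s_{t+1} = -\tfrac{1}{2\eta_x}\|s_{t+1}\|^3 - \tfrac{1}{2} s_{t+1}^{\top} H s_{t+1}.
\end{align*}
Applying \eqref{eq: CRcond2} to the quadratic form yields $s_{t+1}^{\top} H s_{t+1} \ge -\tfrac{\|s_{t+1}\|^3}{2\eta_x}$, so $-\tfrac{1}{2} s_{t+1}^{\top} H s_{t+1} \le \tfrac{\|s_{t+1}\|^3}{4\eta_x}$, and combining gives exactly $-\tfrac{1}{4\eta_x}\|s_{t+1}\|^3$ as the upper bound, establishing \eqref{eq: CRcond3}. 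The main obstacle, as noted, is justifying the strong form of the second-order condition \eqref{eq: CRcond2}; once that is in hand, \eqref{eq: CRcond1} and \eqref{eq: CRcond3} are routine.
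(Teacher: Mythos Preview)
Your proposal is correct and follows the standard Nesterov--Polyak argument for global minimizers of cubic subproblems. The paper itself does not give a proof of this lemma at all: it simply states that the lemma ``restates Lemma 3 of \cite{wang2018note}'' and moves on, so your derivation is in fact more complete than what appears in the paper.
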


\begin{restatable}{lemma}{lemmaIfErrSmallNoKL}\label{lemma_ifErrSmall_noKL}
    Suppose the gradient $\nabla_1 f(x_t, y_{t+1})$ and Hessian $G(x_t,y_{t+1})$ involved in the cubic-regularization step in Algorithm \ref{algo: cubic-minimax} satisfy the following bounds for all $t\le T'-1$ with $T'=\min\{t\ge 0: \|s_{t-1}\|\vee \|s_t\|\le\epsilon'\}$:
    \begin{align}
        &\|\nabla\Phi(x_t)-\nabla_1 f(x_t, y_{t+1})\| \le \beta(\|s_t\|^2+\epsilon'^2),\label{eq: grad_goal_noKL}\\ 
        &\|\nabla^2\Phi(x_t)-G(x_t,y_{t+1})\| \le \alpha(\|s_t\|+\epsilon'). \label{eq: Hess_goal_noKL}
    \end{align}
    
    Then, choosing $\eta_x\le (9L_{\Phi}+18\alpha+28\beta)^{-1}$, the sequence $\{x_t\}_t$ generated by Algorithm \ref{algo: cubic-minimax} satisfies that for all $t\le T'-2$,
    \begin{align}
        &H_{t+1}-H_t \le -(L_{\Phi}+\alpha+\beta)(\|s_{t+1}\|^3+\|s_t\|^3), \label{eq: Hdec2_noKL}
    \end{align}
    where $H_t=\Phi(x_t)+(L_{\Phi}+2\alpha+3\beta)\|x_t-x_{t-1}\|^3$. The same conclusion holds for Algorithm \ref{algo: cubic-minimax-stoc} by replacing $\nabla_1 f(x_t,y_{t+1})$, $G(x_t,y_{t+1})$ with their stochastic estimators $\widehat{\nabla}_1 f(x_t,y_{t+1})$, $\widehat{G}(x_t,y_{t+1})$ respectively. 
\end{restatable}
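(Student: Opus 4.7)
The plan is to bound $\Phi(x_{t+1}) - \Phi(x_t)$ via the Hessian-Lipschitz cubic descent lemma, use the cubic-regularization optimality from Lemma~\ref{lemma_CRsol} together with the approximation error hypotheses \eqref{eq: grad_goal_noKL}--\eqref{eq: Hess_goal_noKL} to cancel the first-order and quadratic terms, and then convert all cross products of $\|s_t\|,\|s_{t+1}\|,\epsilon'$ into pure cubic terms so that the resulting inequality telescopes into the advertised decrement on $H_t$. First I would apply item 2 of Proposition~\ref{prop_Phiystar} ($\nabla^2\Phi$ is $L_\Phi$-Lipschitz) to obtain
\begin{align*}
\Phi(x_{t+1}) \le \Phi(x_t) + \nabla\Phi(x_t)^{\!\top}s_{t+1} + \tfrac{1}{2}s_{t+1}^{\!\top}\nabla^2\Phi(x_t)s_{t+1} + \tfrac{L_\Phi}{6}\|s_{t+1}\|^3,
\end{align*}
and then replace $\nabla\Phi(x_t)$ by $\nabla_1 f(x_t,y_{t+1})$ and $\nabla^2\Phi(x_t)$ by $G(x_t,y_{t+1})$, paying the Cauchy--Schwarz cost $\beta(\|s_t\|^2+\epsilon'^2)\|s_{t+1}\| + \tfrac{\alpha}{2}(\|s_t\|+\epsilon')\|s_{t+1}\|^2$ from \eqref{eq: grad_goal_noKL}--\eqref{eq: Hess_goal_noKL}.

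Next I would plug in the cubic-subproblem upper bound \eqref{eq: CRcond3} to kill the inner-product and quadratic form against the \emph{approximate} gradient/Hessian, giving
\begin{align*}
\Phi(x_{t+1}) - \Phi(x_t) \le \Bigl(\tfrac{L_\Phi}{6} - \tfrac{1}{4\eta_x}\Bigr)\|s_{t+1}\|^3 + \beta(\|s_t\|^2 + \epsilon'^2)\|s_{t+1}\| + \tfrac{\alpha}{2}(\|s_t\|+\epsilon')\|s_{t+1}\|^2.
\end{align*}
All four cross products are then homogenized by the three-term AM--GM inequality, e.g.\ $\|s_t\|^2\|s_{t+1}\| \le \tfrac{2}{3}\|s_t\|^3 + \tfrac{1}{3}\|s_{t+1}\|^3$ and $\epsilon'^2\|s_{t+1}\| \le \tfrac{2}{3}\epsilon'^3 + \tfrac{1}{3}\|s_{t+1}\|^3$.

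The key observation that makes the $\epsilon'$ pieces harmless is the output rule: for $t\le T'-2$ the minimum in the definition of $T'$ has not been attained at index $t+1$, so $\|s_t\|\vee\|s_{t+1}\| > \epsilon'$, hence $\epsilon'^3 \le \|s_t\|^3 + \|s_{t+1}\|^3$. Substituting this into the $\epsilon'$-terms turns every right-hand side contribution into a linear combination of $\|s_t\|^3$ and $\|s_{t+1}\|^3$; collecting coefficients yields a bound of the form
\begin{align*}
\Phi(x_{t+1}) - \Phi(x_t) \le C_1(\alpha,\beta)\|s_t\|^3 + \Bigl(C_2(L_\Phi,\alpha,\beta) - \tfrac{1}{4\eta_x}\Bigr)\|s_{t+1}\|^3,
\end{align*}
with $C_1,C_2$ explicit constants. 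Finally, adding $(L_\Phi+2\alpha+3\beta)(\|s_{t+1}\|^3 - \|s_t\|^3)$ to form $H_{t+1}-H_t$, I would choose $\eta_x \le (9L_\Phi+18\alpha+28\beta)^{-1}$ to ensure the $\|s_{t+1}\|^3$ coefficient is at most $-(2L_\Phi+3\alpha+4\beta)$ and the $\|s_t\|^3$ coefficient is at most $\alpha+2\beta$, which is exactly what is needed to yield \eqref{eq: Hdec2_noKL}. The same derivation works verbatim for Algorithm~\ref{algo: prox-minimax-stoc} because the only inputs to the argument are the two inexactness bounds \eqref{eq: grad_goal_noKL}--\eqref{eq: Hess_goal_noKL}, whose analogues for $\widehat{\nabla}_1 f$ and $\widehat G$ are assumed.

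The main obstacle is the bookkeeping with the $\epsilon'$ terms: one must be careful that the "off-by-one" asymmetry (the error bounds hold at $t \le T'-1$ but the potential decrease is claimed only at $t \le T'-2$) is precisely what allows $\epsilon'^3$ to be dominated by $\|s_t\|^3+\|s_{t+1}\|^3$, and that the coefficient-matching after AM--GM is tight enough to live within the prescribed $\eta_x$. All other ingredients are routine applications of Lipschitz smoothness, the cubic optimality conditions, and Young's inequality.
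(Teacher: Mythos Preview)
Your proposal is correct and follows essentially the same route as the paper: Hessian-Lipschitz cubic upper bound, substitution of the approximate gradient/Hessian with error paid via \eqref{eq: grad_goal_noKL}--\eqref{eq: Hess_goal_noKL}, the cubic-optimality estimate \eqref{eq: CRcond3}, homogenization of cross terms into pure cubes, and absorption of $\epsilon'^3$ using $\epsilon'^3\le\|s_t\|^3+\|s_{t+1}\|^3$ for $t\le T'-2$. The only cosmetic difference is that the paper uses the cruder inequality $ab^2\le a^3\vee b^3\le a^3+b^3$ in place of your weighted AM--GM $ab^2\le\tfrac13 a^3+\tfrac23 b^3$; both yield coefficients comfortably within the prescribed bound $\eta_x\le(9L_\Phi+18\alpha+28\beta)^{-1}$, and the final constants $-(2L_\Phi+3\alpha+4\beta)$ on $\|s_{t+1}\|^3$ and $\alpha+2\beta$ on $\|s_t\|^3$ that you target are exactly those the paper obtains before forming $H_{t+1}-H_t$.
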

\red{This is a key lemma which provides per-iteration decrease of the potential function given that we can access sufficiently accurate gradient $\nabla_1 f(x_t,y_{t+1})\approx \nabla \Phi(x_t)$ and Hessian matrix $G(x_t,y_{t+1})\approx \nabla^2 \Phi(x_t)$. %the exact CR solution $s_{t+1}$ which guarantees sufficient CR objective function decrease as shown in eq. \eqref{eq: CRcond3}, and sufficiently accurate gradient $\nabla_1 f(x_t,y_{t+1})\approx \nabla \Phi(x_t)$ and Hessian $G(x_t,y_{t+1})\approx \nabla^2 \Phi(x_t)$. Hence, 
In this case, the CR objective formed by $\nabla_1 f(x_t,y_{t+1})$ and $G(x_t,y_{t+1})$ is sufficiently close to the target CR objective formed by $\nabla \Phi(x_t)$ and $\nabla^2 \Phi(x_t)$, so we can basically follow the standard CR convergence analysis with these additional approximation errors.}
\begin{proof}
%The proof logic follows that of the proof of Theorem 1 in \cite{wang2019stochastic}.
By the Lipschitz continuity of the Hessian of $\Phi$, we obtain that
\begin{align}
    &\Phi(x_{t+1})-\Phi(x_t) \nonumber\\
    &\le \nabla \Phi(x_t)^{\top} (x_{t+1}-x_t) + \frac{1}{2}(x_{t+1}-x_t)^{\top} \nabla^2 \Phi(x_t)  (x_{t+1}-x_t) + \frac{L_{\Phi}}{6} \|x_{t+1}-x_t\|^3 \nonumber\\
    &= \nabla_1 f(x_t, y_{t+1})^{\top} s_{t+1}  + \frac{1}{2}s_{t+1}^{\top} G(x_t,y_{t+1}) s_{t+1} + \frac{L_{\Phi}}{6} \|s_{t+1}\|^3 \nonumber\\
    &\quad +\big(\nabla \Phi(x_t)-\nabla_1 f(x_t, y_{t+1})\big)^{\top} s_{t+1} + \frac{1}{2}s_{t+1}^{\top} \big(\nabla^2 \Phi(x_t)-G(x_t,y_{t+1})\big) s_{t+1}\nonumber\\
    &\stackrel{(i)}{\le}  \Big(\frac{L_{\Phi}}{6}-\frac{1}{4\eta_x}\Big)\|s_{t+1}\|^3 +
    \beta \|s_{t+1}\|(\|s_t\|^2+\epsilon'^2) + \frac{\alpha}{2}\|s_{t+1}\|^2(\|s_t\|+\epsilon') \nonumber\\
    &\stackrel{(ii)}{\le} \Big(\frac{L_{\Phi}}{6}-\frac{1}{4\eta_x}\Big)\|s_{t+1}\|^3 + \Big(\frac{\alpha}{2}+\beta\Big)(2\|s_{t+1}\|^3+\|s_t\|^3+\epsilon'^3) \nonumber\\
    &\stackrel{(iii)}{\le} \Big(\frac{L_{\Phi}}{6}-\frac{1}{4\eta_x}\Big)\|s_{t+1}\|^3 + \Big(\frac{\alpha}{2}+\beta\Big)(3\|s_{t+1}\|^3+2\|s_t\|^3) \nonumber\\
    &\le -\Big(\frac{1}{4\eta_x}-\frac{L_{\Phi}}{6}-\frac{3\alpha}{2}-3\beta\Big)\|s_{t+1}\|^3 + (\alpha+2\beta)\|s_t\|^3, \nonumber\\
    &\stackrel{(iv)}{\le} -(2L_{\Phi}+3\alpha+4\beta)\|s_{t+1}\|^3 + (\alpha+2\beta)\|s_t\|^3\nonumber
\end{align}
where (i) uses eqs. \eqref{eq: CRcond3}, \eqref{eq: grad_goal_noKL}\& \eqref{eq: Hess_goal_noKL}, (ii) uses the inequality that $ab^2\le a^3\vee b^3\le a^3+b^3, \forall a,b\ge 0$, (iii) uses $\epsilon'^3\le \|s_t\|^3\vee\|s_{t+1}\|^3\le \|s_t\|^3+\|s_{t+1}\|^3, \forall 0\le t\le T'-2$ based on the termination criterion of $T'$, and (iv) uses $\eta_x\le (9L_{\Phi}+18\alpha+28\beta)^{-1}$. Eq. \eqref{eq: Hdec2_noKL} follows from the above inequality by defining $H_t=\Phi(x_t)+(L_{\Phi}+2\alpha+3\beta)\|x_t-x_{t-1}\|^2$. 

Note that the cubic-regularization step in Algorithm \ref{algo: cubic-minimax-stoc} 
simply replaces $\nabla_1 f(x_t,y_{t+1})$, $G(x_t,y_{t+1})$ in Algorithm \ref{algo: cubic-minimax} with their stochastic estimators $\widehat{\nabla}_1 f(x_t,y_{t+1})$, $\widehat{G}(x_t,y_{t+1})$ respectively. Hence, eq. \eqref{eq: Hdec2_noKL} holds for Algorithm \ref{algo: cubic-minimax-stoc} after such replacement in eqs. \eqref{eq: grad_goal_noKL} \& \eqref{eq: Hess_goal_noKL}.
\end{proof}

\begin{lemma}\label{lemma_general_rate}
    Suppose all the conditions of Lemma \ref{lemma_ifErrSmall_noKL} hold. If $T\ge \frac{\Phi(x_0)-\Phi^*+(L_{\Phi}+3\alpha+4\beta)\epsilon'^2}{(L_{\Phi}+\alpha+\beta)\epsilon'^3}$ in Algorithm \ref{algo: cubic-minimax}, then $T'=\min\{t\ge 1: \|s_{t-1}\|\vee \|s_t\|\le\epsilon'\}\le \frac{\Phi(x_0)-\Phi^*+(L_{\Phi}+3\alpha+4\beta)\epsilon'^2}{(L_{\Phi}+\alpha+\beta)\epsilon'^3}\le T$. Consequently, the output of Algorithm \ref{algo: cubic-minimax} has the following convergence rate
    \begin{align}
        &\|\nabla \Phi(x_{T'})\| \le \Big(\frac{1}{2\eta_x}+L_{\Phi}+2\alpha+2\beta\Big) \epsilon'^2, \label{eq: grad_rate_early}\\
        &\nabla^2 \Phi(x_{T'}) \succeq -\Big(\frac{1}{2\eta_x}+L_{\Phi}+2\alpha\Big)\epsilon'I. \label{eq: Hess_rate_early}
    \end{align}
    The same conclusion holds for Algorithm \ref{algo: cubic-minimax-stoc} by replacing $\nabla_1 f(x_t,y_{t+1})$, $G(x_t,y_{t+1})$ in the conditions \eqref{eq: grad_goal_noKL} \& \eqref{eq: Hess_goal_noKL} with their stochastic estimators $\widehat{\nabla}_1 f(x_t,y_{t+1})$, $\widehat{G}(x_t,y_{t+1})$ respectively. 
\end{lemma}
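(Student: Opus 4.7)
The plan is to combine the potential decrease from Lemma \ref{lemma_ifErrSmall_noKL} with the cubic-regularization optimality conditions of Lemma \ref{lemma_CRsol}, evaluated at the termination time. First I would telescope the potential inequality \eqref{eq: Hdec2_noKL} across $t = 0, 1, \ldots, T' - 2$. For each such $t$, step $t+1 < T'$ has not yet triggered the stopping rule, so $\|s_t\| \vee \|s_{t+1}\| > \epsilon'$, which forces $\|s_t\|^3 + \|s_{t+1}\|^3 > \epsilon'^3$. Summing the telescoped inequality and using $H_{T'-1} \ge \Phi^*$ (all extra terms in $H_t$ are nonnegative), together with the initialization $\|s_0\| = \epsilon'$ that yields $H_0 = \Phi(x_0) + (L_\Phi + 2\alpha + 3\beta)\epsilon'^3$, gives
\begin{align*}
(L_\Phi + \alpha + \beta)(T' - 1)\epsilon'^3 \le H_0 - \Phi^*,
\end{align*}
which rearranges to the claimed upper bound on $T'$. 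The hypothesis on $T$ then guarantees $T' \le T$.

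Next, at $t = T' - 1$ the first-order cubic optimality \eqref{eq: CRcond1} gives $\nabla_1 f(x_{T'-1}, y_{T'}) + G(x_{T'-1}, y_{T'}) s_{T'} = -\frac{1}{2\eta_x}\|s_{T'}\|s_{T'}$. Adding and subtracting $\nabla \Phi(x_{T'-1}) + \nabla^2 \Phi(x_{T'-1}) s_{T'}$ and bounding the mismatch via \eqref{eq: grad_goal_noKL} and \eqref{eq: Hess_goal_noKL}, then applying a first-order Taylor expansion of $\nabla \Phi$ about $x_{T'-1}$ together with the $L_\Phi$-Lipschitz continuity of $\nabla^2 \Phi$ from Proposition \ref{prop_Phiystar}, rewrites the left-hand side as $\nabla \Phi(x_{T'})$ plus a $\frac{L_\Phi}{2}\|s_{T'}\|^2$ remainder. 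Taking norms yields
\begin{align*}
\|\nabla \Phi(x_{T'})\| \le \beta\bigl(\|s_{T'-1}\|^2 + \epsilon'^2\bigr) + \alpha\bigl(\|s_{T'-1}\| + \epsilon'\bigr)\|s_{T'}\| + \Bigl(\tfrac{1}{2\eta_x} + \tfrac{L_\Phi}{2}\Bigr)\|s_{T'}\|^2,
\end{align*}
and the termination criterion $\|s_{T'-1}\| \vee \|s_{T'}\| \le \epsilon'$ collapses this to \eqref{eq: grad_rate_early}.

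For the second-order bound, the cubic optimality \eqref{eq: CRcond2} at $t = T' - 1$ gives $G(x_{T'-1}, y_{T'}) \succeq -\frac{1}{2\eta_x}\|s_{T'}\| I$. Translating to $\nabla^2 \Phi(x_{T'-1})$ via the Hessian-inexactness bound \eqref{eq: Hess_goal_noKL}, and then to $\nabla^2 \Phi(x_{T'})$ via $\|\nabla^2 \Phi(x_{T'}) - \nabla^2 \Phi(x_{T'-1})\| \le L_\Phi \|s_{T'}\|$, produces
\begin{align*}
\nabla^2 \Phi(x_{T'}) \succeq -\Bigl[\tfrac{1}{2\eta_x}\|s_{T'}\| + \alpha\bigl(\|s_{T'-1}\| + \epsilon'\bigr) + L_\Phi \|s_{T'}\|\Bigr] I,
\end{align*}
which again collapses to \eqref{eq: Hess_rate_early} under the termination criterion. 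The Stochastic Cubic-GDA case is identical after substituting $\widehat{\nabla}_1 f$ and $\widehat{G}$ for $\nabla_1 f$ and $G$, since Lemmas \ref{lemma_ifErrSmall_noKL} and \ref{lemma_CRsol} both apply to the stochastic version verbatim.

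The main bookkeeping obstacle is the telescoping step: one must carefully account for the extra cubic contribution $(L_\Phi + 2\alpha + 3\beta)\epsilon'^3$ in $H_0$ coming from the convention $\|s_0\| = \epsilon'$, and verify that the sum picks up exactly $T' - 1$ terms each lower-bounded by $\epsilon'^3$, so that the numerator absorbs into the constants appearing in the stated $T'$ bound. The remaining steps are direct algebra on the optimality conditions combined with Taylor remainders, with the termination rule $\|s_{T'-1}\| \vee \|s_{T'}\| \le \epsilon'$ doing the final work of converting increment bounds into the $\epsilon$-stationarity guarantees.
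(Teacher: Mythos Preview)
Your proposal is correct and follows essentially the same approach as the paper: telescope the potential decrease \eqref{eq: Hdec2_noKL} to bound $T'$, then combine the cubic optimality conditions \eqref{eq: CRcond1}--\eqref{eq: CRcond2} with the inexactness bounds \eqref{eq: grad_goal_noKL}--\eqref{eq: Hess_goal_noKL} and the Lipschitz Hessian of $\Phi$ at the termination index. One small technical point: your telescoping over $t=0,\ldots,T'-2$ tacitly assumes $T'$ is finite, which is precisely what you are trying to establish; the paper handles this by framing the argument as a contradiction (assume $T'>T$, telescope over $t=0,\ldots,T-1$, and contradict the hypothesis on $T$), which you can adopt verbatim without changing anything else.
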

\red{This Lemma provides two nice properties of the last-iterate $T'$, i.e., $T'$ has a finite upper bound $T$, and $x_{T'}$ is close to the second-order stationary condition $\nabla \Phi(x) = \zero$, $\nabla^2 \Phi(x) \succ \zero$ given by Fact \ref{coro: 1}. These properties can be proved respectively by two facts implied by the termination rule $\|s_{t-1}\|\vee \|s_t\|\le \epsilon'$. First, before termination (i.e., $t\le T'-1$), we have $\|s_{t-1}\|\vee \|s_t\|>\epsilon'$, which results in sufficient potential function decrease $H_{t+1}-H_t \le -(L_{\Phi}+\alpha+\beta)\epsilon'^3$ given by eq. \eqref{eq: Hdec2_noKL}. The number $T'$ of such sufficient decreases has to be finitely bounded since $H_t\ge \min_{x}\Phi(x)>-\infty$. Second, at the last iteration $t=T'$, $\nabla_1 f(x_t,y_{t+1})$ is very close to $\zero$ and $G(x_t,y_{t+1})$ is very close to positive semi-definite based on eqs. \eqref{eq: CRcond1} \& \eqref{eq: CRcond2}. Also, the gradient $\nabla_1 f(x_t,y_{t+1})\approx \nabla \Phi(x_t)$ and Hessian $G(x_t,y_{t+1})\approx \nabla^2 \Phi(x_t)$ have $\epsilon'$-level small approximation error based on eqs. \eqref{eq: grad_goal_noKL} \& \eqref{eq: Hess_goal_noKL} respectively. Hence, $x_t$ is approximately a second-order stationary point of $\Phi$, i.e., eqs. \eqref{eq: grad_rate_early} \& \eqref{eq: Hess_rate_early} hold.}
\begin{proof}
%The proof logic follows that of the proof of Theorem 1 in \cite{wang2019stochastic}.
%The proof logic is similar to the end of the proof of Theorem 1 in \cite{wang2019stochastic}. 

Suppose $T'\le T$ does not hold, i.e., $\|s_{t-1}\|\vee \|s_t\|>\epsilon', \forall 1\le t\le T$, which implies that eq. \eqref{eq: Hdec2_noKL} holds for all $0\le t\le T-1$ based on Lemma \ref{lemma_ifErrSmall_noKL}.

On one hand, telescoping eq. \eqref{eq: Hdec2_noKL} over $t=0,1,\ldots,T-1$ yield that
\begin{align}
    H_0-H_T&\ge(L_{\Phi}+\alpha+\beta)\sum_{t=0}^{T-1}(\|s_{t+1}\|^3+\|s_t\|^3) \nonumber\\
    &\ge(L_{\Phi}+\alpha+\beta)\sum_{t=0}^{T-1}(\|s_t\|\vee\|s_{t+1}\|)^3 \nonumber\\
    &>T(L_{\Phi}+\alpha+\beta)\epsilon'^3 \label{eq: Hdiff_low}
\end{align}
On the other hand, recalling the definition of $H_t$ in Lemma \ref{lemma_ifErrSmall_noKL}, we have
\begin{align}
    H_0-H_T&=\Phi(x_0)-\Phi(x_T)+(L_{\Phi}+2\alpha+3\beta)(\|s_0\|^2-\|s_T\|^2) \nonumber\\
    &\stackrel{(i)}{\le}\Phi(x_0)-\Phi^*+(L_{\Phi}+3\alpha+4\beta)\epsilon'^2, \label{eq: Hdiff_up}
\end{align}
where (i) uses $\|s_0\|=\epsilon'$ and $\Phi(x_T)\ge \Phi^*=\min_{x\in\mathbb{R}^m}\Phi(x)$. Note that eqs. \eqref{eq: Hdiff_low} \& \eqref{eq: Hdiff_up} contradict. Therefore, we must have $1\le T'\le T$ for any $T\ge \frac{\Phi(x_0)-\Phi^*+(L_{\Phi}+3\alpha+4\beta)\epsilon'^2}{(L_{\Phi}+\alpha+\beta)\epsilon'^3}$, which implies that $T'\le \frac{\Phi(x_0)-\Phi^*+(L_{\Phi}+3\alpha+4\beta)\epsilon'^2}{(L_{\Phi}+\alpha+\beta)\epsilon'^3}\le T$.  

Finally, we conclude that 
\begin{align}
&\|\nabla \Phi(x_{T'})\|\nonumber\\
&\overset{(i)}{=}\Big\|\nabla \Phi(x_{T'})-\nabla_1 f(x_{T'-1},y_{T'}) -G(x_{T'-1},y_{T'}) s_{T'} -\frac{1}{2\eta_x}\|s_{T'}\|s_{T'}\Big\| \nonumber\\
&\le \|\nabla \Phi(x_{T'})-\nabla \Phi(x_{T'-1})-\nabla^2 \Phi(x_{T'-1})s_{T'}\| + \|\nabla \Phi(x_{T'-1})-\nabla_1 f(x_{T'-1},y_{T'})\|\nonumber\\
&\quad + \|\nabla^2 \Phi(x_{T'-1})s_{T'}-G(x_{T'-1},y_{T'}) s_{T'}\| + \frac{1}{2\eta_x}\|s_{T'}\|^2 \nonumber\\
&\overset{(ii)}{\le} \Big(L_{\Phi}+\frac{1}{2\eta_x}\Big)\|s_{T'}\|^2  +\beta(\|s_{T'-1}\|^2+\epsilon'^2) +\alpha(\|s_{T'-1}\|+\epsilon')\|s_{T'}\|\nonumber\\
&\overset{(iii)}{\le} \Big(\frac{1}{2\eta_x}+L_{\Phi}+2\alpha+2\beta\Big) \epsilon'^2, \label{eq: grad_bound_general}
\end{align}
where (i) uses eq. \eqref{eq: CRcond1}, (ii) uses eqs. \eqref{eq: grad_goal_noKL} \& \eqref{eq: Hess_goal_noKL} and the item \ref{item: ddPhi} of \Cref{prop_Phiystar2} that $\nabla^2\Phi$ is $L_{\Phi}$-Lipschitz, and (iii) uses $\|s_{T'-1}\|\vee \|s_{T'}\|\le\epsilon'$. Also,
\begin{align}
\nabla^2 \Phi(x_{T'}) &\overset{(i)}{\succeq}  G(x_{T'-1},y_{T'})-\|G(x_{T'-1},y_{T'})-\nabla^2 \Phi(x_{T'})\|I \nonumber\\
&\overset{(ii)}{\succeq} -\frac{1}{2\eta_x}\|s_{T'}\|I -\|G(x_{T'-1},y_{T'})-\nabla^2 \Phi(x_{T'-1})\|I -\|\nabla^2 \Phi(x_{T'})-\nabla^2 \Phi(x_{T'-1})\|I \nonumber\\
&\overset{(iii)}{\succeq} -\Big(\frac{1}{2\eta_x}+L_{\Phi}\Big)\|s_{T'}\|I-\alpha(\|s_{T'-1}\|+\epsilon')I \nonumber\\
&\overset{(iv)}{\succeq} -\Big(\frac{1}{2\eta_x}+L_{\Phi}+2\alpha\Big)\epsilon' I, \label{eq: Hess_bound_general}
\end{align}
where (i) uses Weyl's inequality, (ii) uses eq. \eqref{eq: CRcond2}, (iii) uses eq. \eqref{eq: Hess_goal_noKL} and the item \ref{item: ddPhi} of \Cref{prop_Phiystar2} that $\nabla^2\Phi$ is $L_{\Phi}$-Lipschitz, and (iv) uses $\|s_{T'-1}\|\vee \|s_{T'}\|\le\epsilon'$. 
\end{proof}

For the stochastic minimax optimization problem (Q) in Section \ref{sec: stoc}, we prove the following supporting lemma on the error of the stochastic estimators.
\lemmaBatch*
\begin{proof}
    Based on Lemmas 6 \& 8 of \cite{kohler2017}, we obtain that with probability at least $1-\delta$, the following bounds hold. (We replaced $\delta$ in \cite{kohler2017} with $\delta/5$ by applying union bound to the following 5 events.)
    \begin{align}
        &\|\widehat{\nabla}_1 f(x,y)-\nabla_1 f(x,y)\|\le 4\sqrt{2}L_0 \sqrt{\frac{\ln(10m/\delta)+1/4}{|B_1|}} \le \epsilon_1,\\ 
        &\|\widehat{\nabla}_{k,\ell}^2 f(x,y)-\nabla_{k,\ell}^2 f(x,y)\|\le 4L_1\sqrt{\frac{\ln\big(10(m\vee n)/\delta\big)}{|B_{k,\ell}|}} \le \epsilon_2; k,\ell\in\{1,2\}.
    \end{align}
    Note that here we only consider the cases that $|B_1|,|B_{k,\ell}|<N$ for all $k,\ell\in\{1,2\}$. Otherwise, $|B_1|=N$ yields $\|\widehat{\nabla}_1 f(x,y)-\nabla_1 f(x,y)\|=0<\epsilon_1$ and $|B_{k,\ell}|=N$ yields $\|\widehat{\nabla}_{k,\ell}^2 f(x,y)-\nabla_{k,\ell}^2 f(x,y)\|=0<\epsilon_2$. Hence, in both cases, the above high probability bounds hold, which further implies eq. \eqref{eq: stoc_err3} following the argument below.
        \begin{align}
        &\|\widehat{G}(x,y)-G(x,y)\|\nonumber\\
        &\le \|\widehat{\nabla}_{11} f(x,y)-\nabla_{11} f(x,y)\| + \|(\widehat{\nabla}_{12} f (\widehat{\nabla}_{22} f)^{-1} \widehat{\nabla}_{21} f)(x,y) - (\nabla_{12}f (\nabla_{22}f)^{-1} \nabla_{21} f)(x,y)\| \nonumber\\
        &\le \epsilon_2 +\|(\widehat{\nabla}_{12} f-\nabla_{12}f)\big((\widehat{\nabla}_{22} f)^{-1} \widehat{\nabla}_{21} f\big)(x,y)\| \nonumber\\
        &\quad + \|\nabla_{12}f\big((\widehat{\nabla}_{22} f)^{-1}-(\nabla_{22} f)^{-1}\big)\widehat{\nabla}_{21} f(x,y)\| +\|\nabla_{12}f (\nabla_{22}f)^{-1} (\widehat{\nabla}_{21} f-\nabla_{21}f)(x,y)\| \nonumber\\
        &\overset{(i)}{\le} \epsilon_2 + \epsilon_2 \mu^{-1} L_1 + L_1^2\|\nabla_{22}f(x,y)^{-1}\| \|(\nabla_{22}f-\widehat{\nabla}_{22}f)(x,y)\| \|\widehat{\nabla}_{22}f(x,y)^{-1}\| + L_1\mu^{-1}\epsilon_2 \nonumber\\
        &\overset{(ii)}{\le} \epsilon_2+2\kappa\epsilon_2+L_1^2\mu^{-2}\epsilon_2\le (\kappa+1)^2\epsilon_2.
    \end{align}
    where (i) and (ii) use Lemma \ref{lemma_HessBound}.
\end{proof}

Regarding the high-probability convergence rate of the inner stochastic gradient ascent (SGA) in Algorithm \ref{algo: cubic-minimax-stoc}, the following result is a direct application of Theorem 3.1 in \cite{harvey2019simple}.

% This assumption implies that the gradients and stochastic gradients are bounded along the SGA trajectories $x_t,\widetilde{y}_k$, which holds if the trajectories are bounded. The above bound 1 can be obtained by properly scaling the objective function $f$. With this assumption, we obtain the following convergence rate of the SGA steps below.

\section{Proof of \Cref{prop_Phiystar2}} \label{sec_prop1proof}
\propphy*
\begin{proof}
\red{The items \ref{item: ystar_lipschitz} \& \ref{item: Phi_Lsmooth} have been proved in \cite{lin2019gradient}.} 
%\cite{chen2021proximal,lin2019gradient}. 

We first prove the item \ref{item: G_lipschitz}.
Consider any $x, x'\in \mathbb{R}^m$ and $y,y'\in\mathbb{R}^n$. For convenience we denote $z=(x,y)$ and $z'=(x',y')$. Then, by \Cref{assum: fcubic} and using the bounds of \Cref{lemma_HessBound}, we have that
\begin{align}
    &\|G(x',y')-G(x,y)\|\nonumber\\
    &\le \|\nabla_{11} f(x',y')-\nabla_{11} f(x,y)\| + \|\nabla_{12} f(x',y')-\nabla_{12} f(x,y)\| \|[\nabla_{22}f(x',y')]^{-1}\| \|\nabla_{21}f(x',y')\| \nonumber\\
    &\quad +\|\nabla_{12} f(x,y)\| \|[\nabla_{22}f(x',y')]^{-1}-[\nabla_{22}f(x,y)]^{-1}\| \|\nabla_{21}f(x',y')\| \nonumber\\ 
    &\quad +\|\nabla_{12} f(x,y)\| \|[\nabla_{22}f(x,y)^{-1}]\| \|\nabla_{21}f(x',y')-\nabla_{21}f(x,y)\| \nonumber\\
    &\le L_2\|z'-z\| + (L_2\|z'-z\|)\mu^{-1}L_1 \nonumber\\
    &\quad + L_1^2 \|[\nabla_{22}f(x',y')]^{-1}\| \|\nabla_{22}f(x,y)-\nabla_{22}f(x',y')\| \|[\nabla_{22}f(x,y)]^{-1}\| +L_1\mu^{-1}(L_2\|z'-z\|) \nonumber\\
    &\le L_2(1+2\kappa) \|z'-z\| + L_1^2\mu^{-1}(L_2\|z'-z\|)\mu^{-1} \nonumber\\
    &\le L_2(1+\kappa)^2\|z'-z\|. \nonumber
\end{align}

Next, we prove the item \ref{item: ddPhi}. Consider any fixed $x\in\mathbb{R}^m$, we know that $f(x,\cdot)$ achieves its maximum at $y^*(x)$, where the gradient vanishes, i.e., $\nabla_2 f(x,y^*(x))=\zero$. Thus, we further obtain that 
\begin{align}
    &\zero=\nabla_x \nabla_2 f(x,y^*(x))=\nabla_{21} f(x,y^*(x))+\nabla_{22} f(x,y^*(x))\nabla y^*(x), \nonumber
\end{align}
which implies that 
\begin{align}
    \nabla y^*(x)=-[\nabla_{22}f(x,y^*(x))]^{-1}\nabla_{21}f(x,y^*(x)). \label{eq: ystar_grad}
\end{align}

With the above equation, we take derivative of $\nabla \Phi(x)=\nabla_1 f(x,y^*(x))$ and obtain that
\begin{align}
    \nabla^2 \Phi(x) =& \nabla_{11} f(x,y^*(x)) + \nabla_{12} f(x,y^*(x)) \nabla y^*(x)\nonumber\\
    =& \nabla_{11} f(x,y^*(x)) - \nabla_{12} f(x,y^*(x)) [\nabla_{22}f(x,y^*(x))]^{-1}\nabla_{21}f(x,y^*(x))\\\nonumber
    =& G(x, y^*(x)). \nonumber
\end{align}
Moreover, we have that
\begin{align}
    \|\nabla^2 \Phi(x')-\nabla^2 \Phi(x)\|=& \|G(x',y^*(x'))-G(x,y^*(x))\| \nonumber\\
    \le& L_G \big[\|x'-x\|+\|y^*(x')-y^*(x)\|\big] \nonumber\\
    \le& L_G(1+\kappa) \|x'-x\|,
\end{align}
where the last step uses the item \ref{item: ystar_lipschitz} of \Cref{prop_Phiystar2}. This proves the item \ref{item: ddPhi}. 

%Finally, we will prove the item \ref{item: H_Lsmooth}, i.e., the following partial gradients of function $H(x,x',y)$ is $L_H$-Lipschitz.
%\begin{align}
%    \nabla_1 H(x,x',y)=\nabla \Phi(x) + \frac{2L}{3}\|\|
%\end{align}

\end{proof}

\section{Proof of \Cref{prop: lyapunov_cubic}} \label{sec_prop2proof}

\proplya*
\red{Note that Lemma \ref{lemma_ifErrSmall_noKL} already proves eq. \eqref{eq: potential} under the condition that the approximate gradient $\nabla_1 f(x_t,y_{t+1})\approx \nabla \Phi(x_t)$ and Hessian $G(x_t,y_{t+1})\approx \nabla^2 \Phi(x_t)$ are sufficiently accurate, as given by eqs. \eqref{eq: grad_goal_noKL} \& \eqref{eq: Hess_goal_noKL} respectively. Hence, it remains to prove eqs. \eqref{eq: grad_goal_noKL} \& \eqref{eq: Hess_goal_noKL} by showing that the gradient ascent steps on the $\mu$-strongly convex function $f(x_t,y)$ yields sufficiently small error $\|y_{t+1}-y^*(x_t)\|$ as shown in eq. \eqref{eq: yerr_t2} below.}
\begin{proof}
    The required number of inner gradient ascent steps is shown below
    \begin{align}
%        \left\{ \begin{gathered}
        N_0&\ge \kappa\ln\big(L_1\|y_0-y^*(x_0)\|/(2\beta\epsilon'^2)\big)\nonumber\\
	    N_t&\ge \kappa\ln\Big(\frac{2L_1\alpha\|s_{t-1}\|+L_1(\alpha+L_G\kappa)\|s_t\|}{L_G\beta\epsilon'^2}\Big)\nonumber\\
	    &=\mathcal{O}\Big(\kappa\ln \frac{L_1\alpha \|s_{t-1}\|+ L_1(\alpha+L_2\kappa)\|s_t\|}{L_G\beta\epsilon'^2}\Big); 1\le t\le T'. \label{eq: Nt2}%\\
%	    \end{gathered}  \right., 
    \end{align}
    To prove eq. \eqref{eq: potential}, we first prove by induction that for any $t\ge 0$.
	\begin{align}
        \|y_{t+1}-y^*(x_t)\|\le& \frac{\alpha(\|s_t\|+\epsilon')}{L_G}\wedge\frac{\beta(\|s_t\|^2+\epsilon'^2)}{L_1}; 0\le t\le T'-1. \label{eq: yerr_t2}
    \end{align}
    Note that $y_{t+1}$ is obtained by applying $N_t$ gradient ascent steps starting from $y_t$. Hence, by the convergence rate of gradient ascent algorithm under strong concavity, we conclude that with learning rate $\eta_{y}=\frac{2}{L+\mu}$,
    \begin{align}
    	\|y_{t+1} - y^*(x_{t}) \| \le&  (1-\kappa^{-1})^{N_t}\|y_{t} - y^*(x_{t})\|. \label{eq: 8}
    \end{align}
    When $t=0$, eq. \eqref{eq: 8} implies that
    \begin{align}
        \|y_1-y^*(x_0)\|&\le \|y_0-y^*(x_0)\|\exp\big(N_0\ln(1-\kappa^{-1})\big) \nonumber\\
        &\overset{(i)}{\le} \|y_0-y^*(x_0)\|\exp\Big(-\ln\Big(\frac{L_1\|y_0-y^*(x_0)\|}{2\beta\epsilon'^2}\Big)\Big)\nonumber\\
        &=\frac{2\beta\epsilon'^2}{L_1} \overset{(ii)}{\le} \frac{\alpha(\|s_0\|+\epsilon')}{L_G}\wedge\frac{\beta(\|s_0\|^2+\epsilon'^2)}{L_1} \label{eq: yerr_0}
    \end{align}
    where (i) uses eq. \eqref{eq: Nt2} and $\ln(1-x)\le -x<0$ for $x=\kappa^{-1}\in(0,1)$ and (ii) uses $\|s_0\|=\epsilon'\le \frac{\alpha L_1}{\beta L_G}$. Hence, eq. \eqref{eq: yerr_t2} holds when $t=0$. 
    
    If eq. \eqref{eq: yerr_t2} holds for $t=k-1\in[0,T'-2]$, then
    \begin{align}
        &\|y_{k+1} - y^*(x_{k}) \| \nonumber\\
        &\le  (1-\kappa^{-1})^{N_k}\|y_{k} - y^*(x_{k-1})\| + (1-\kappa^{-1})^{N_k}\|y^*(x_{k-1}) - y^*(x_{k})\| \nonumber\\
        &\overset{(i)}{\le} \exp\big(N_k\ln(1-\kappa^{-1})\big) \Big(\Big(\frac{\alpha(\|s_{k-1}\|+\epsilon')}{L_G}\wedge\frac{\beta(\|s_{k-1}\|^2+\epsilon'^2)}{L_1}\Big)+\kappa\|s_k\|\Big) \nonumber\\
        &\overset{(ii)}{\le} \exp\Big(-\ln\Big(\frac{2L_1\alpha\|s_{k-1}\|+L_1(\alpha+L_G\kappa)\|s_k\|}{L_G\beta\epsilon'^2}\Big)\Big) \Big(\frac{\alpha(\|s_{k-1}\|+\epsilon')}{L_G} + \kappa\|s_k\|\Big) \nonumber\\
        &\overset{(iii)}{\le}\frac{L_G\beta\epsilon'^2}{2L_1\alpha\|s_{k-1}\|+L_1(\alpha+L_G\kappa)\|s_k\|} \frac{\alpha(2\|s_{k-1}\|+\|s_k\|)+L_G\kappa\|s_k\|}{L_G}\nonumber\\ &=\frac{\beta\epsilon'^2}{L_1}\overset{(iv)}{=}\frac{\alpha\epsilon'}{L_G}\wedge\frac{\beta\epsilon'^2}{L_1}\le \frac{\alpha(\|s_k\|+\epsilon')}{L_G}\wedge\frac{\beta(\|s_k\|^2+\epsilon'^2)}{L_1} \nonumber
    \end{align}
    where (i) uses eq. \eqref{eq: yerr_t2} for $t=k-1$ and the fact that $y^*$ is $\kappa$-Lipschitz mapping (see \cite{lin2019gradient,chen2021proximal} for the proof), (ii) uses $\ln(1-\kappa^{-1})\le -\kappa^{-1}$ and eq. \eqref{eq: Nt2}, (iii) uses $\epsilon'\le \|s_{k-1}\|\vee\|s_k\|\le \|s_{k-1}\|+\|s_k\|$ for $k\le T'-1$, and (iv) uses the condition that $\epsilon'\le \frac{\alpha L_1}{\beta L_G}$. This proves eq. \eqref{eq: yerr_t2} holds for $t=k$ and thus for all $t\in[0,T'-1]$, which further implies eqs. \eqref{eq: grad_goal_noKL} \& \eqref{eq: Hess_goal_noKL}. Hence, by Lemma \ref{lemma_ifErrSmall_noKL}, we prove that eq. \eqref{eq: potential} holds for all $0\le t\le T'-1$. 
\end{proof}

\section{Proof of Theorem \ref{thm: 1b}}\label{sec: thm2}
\thmglobalrate*
\red{Eq. \eqref{eq: gH_rate1} is directly implied by the second-order stationary conditions \eqref{eq: grad_rate_early} \& \eqref{eq: Hess_rate_early}. Hence, it remains to compute the iteration numbers $T'$ and $\sum_{t=0}^{T'-1} N_t$ by substituting the hyperparameters. }
% [to remove] Let \Cref{assum: fcubic} hold. For any $\alpha, \beta>0$, choose $\epsilon'\le \frac{\alpha L_1}{\beta L_G}$, $\eta_x\le (9L_{\Phi}+18\alpha+28\beta)^{-1}$ and $\eta_y\le \frac{2}{L+\mu}$. Then, Outputs I $\&$ II of \Cref{algo: cubic-minimax} satisfy the following potential decrease property respectively in the cases listed below.
% \begin{align}
%     H_{t+1}-H_t \le -(L_{\Phi}+\alpha+\beta)(\|s_{t+1}\|^3+\|s_t\|^3). 
% \end{align}
% \begin{itemize}[leftmargin=*,topsep=0mm,itemsep=1mm]
%     \item Output I: choose $N_t \ge \mathcal{O} \big( \kappa\ln \frac{t\alpha\|s_{t-1}\|+t^{1/4}\alpha\epsilon'+ t\kappa\|s_t\|}{\beta\epsilon'^2} \big)$, \red{$T\ge \frac{\Phi(x_0)-\Phi^*+8L_{\Phi}\epsilon'^2}{\epsilon'^3(L_{\Phi}+\alpha+\beta)}$}. Then, for all $0\le t~\le T-1$, \cref{eq: potential}
%     holds with $H_t=\Phi(x_t)+(L_{\Phi}+2\alpha+3\beta)\|s_t\|^2+a_t\epsilon'^3\big(\frac{\alpha}{2}+\beta\big)$, where $a_t = 6\pi^{-2} - \sum_{k=1}^{t-1}k^{-2} \downarrow 0$;
%     \item Output II: choose $N_t\ge \mathcal{O}\big(\kappa\ln \frac{\alpha \|s_{t-1}\|+ (\alpha+\kappa)\|s_t\|}{\beta\epsilon'^2}\big)$. Then, for all $0\le t\le T'-1$, \cref{eq: potential}
%     holds with $H_t:=\Phi(x_t)+(L_{\Phi}+2\alpha+3\beta)\|s_t\|^2$.
% \end{itemize}

\begin{proof}%[Proof of Theorem \ref{thm: 1b}]
Substituting $\alpha=\beta=L_{\Phi}=L_2(1+\kappa)^3$ and $\epsilon'=\frac{\epsilon}{\sqrt{33L_{\Phi}}}$ into Lemma \ref{lemma_general_rate} yields that when $T\ge \sqrt{33L_{\Phi}}\frac{33(\Phi(x_0)-\Phi^*)+8\epsilon^2}{3\epsilon^3}$ and $\eta_x=(55L_{\Phi})^{-1}$, we have $T'\le \sqrt{33L_{\Phi}}\frac{33(\Phi(x_0)-\Phi^*)+8\epsilon^2}{3\epsilon^3}=\mathcal{O}\big(\sqrt{L_2}\kappa^{1.5}\epsilon^{-3}\big)\le T$, and moreover,
\begin{align}
    &\|\nabla \Phi(x_{T'})\| \le \Big(\frac{1}{2\eta_x}+L_{\Phi}+2\alpha+2\beta\Big) \epsilon'^2 \le \epsilon^2, \nonumber\\
    &\lambda_{\min}\big(\nabla^2 \Phi(x_{T'})\big) \ge -\Big(\frac{1}{2\eta_x}+L_{\Phi}+2\alpha\Big)\epsilon' \ge -\sqrt{33L_{\Phi}}\epsilon. \nonumber
\end{align}
This proves eq. \eqref{eq: gH_rate1}. 

Note that the number of gradient ascent iterations $N_t$ should satisfy eq. \eqref{eq: Nt2}. Substituting $\alpha=\beta=L_{\Phi}=L_2(1+\kappa)^3$ and $\epsilon'=\frac{\epsilon}{\sqrt{33L_{\Phi}}}$ into eq. \eqref{eq: Nt2} yields that
\begin{align}
%    \left\{ \begin{gathered}
    N_0&\ge \kappa\ln\big(L_1\|y_0-y^*(x_0)\|/(2\beta\epsilon'^2)\big) = \kappa\ln\big(33L_1\|y_0-y^*(x_0)\|/(2\epsilon^2)\big) \nonumber\\
    N_t&\ge \kappa\ln\Big(\frac{2L_1\alpha\|s_{t-1}\|+L_1(\alpha+L_G\kappa)\|s_t\|}{L_G\beta\epsilon'^2}\Big)\nonumber\\
    &\overset{(i)}{=}\kappa\ln\Big(\frac{66L_1(1+\kappa)\|s_{t-1}\|+33L_1(2+\kappa)\|s_t\|}{\epsilon^2}\Big); 1\le t\le T'\nonumber%\\
    %\end{gathered}  \right., \nonumber
\end{align}
where (i) uses $L_G=L_2(1+\kappa)^2$ and $L_{\Phi}=L_2(1+\kappa)^3$ in \Cref{prop_Phiystar2}. 

When we select $N_t$ such that the above holds with equality, then the total number of gradient ascent iterations is upper bounded as follows
\begin{align}
    \sum_{t=0}^{T'-1} N_t&=\!\kappa\ln\big(33L_1\|y_0-y^*(x_0)\|/(2\epsilon^2)\big)\!+\!\kappa\!\sum_{t=0}^{T'-1}\!\ln\Big(\frac{66L_1(1+\kappa)\|s_{t-1}\|\!+\!33L_1(2+\kappa)\|s_t\|}{\epsilon^2}\Big) \nonumber\\
    &\le\!\kappa\ln\big(33L_1\|y_0-y^*(x_0)\|/(2\epsilon^2)\big)\!+\!\kappa\!\sum_{t=0}^{T'-1} \ln\big(\|s_{t-1}\|+\|s_t\|\big)\!+\!\kappa\!T'\ln \big(66L_1\epsilon^{-2}(1+\kappa)\big) \nonumber\\
    &\overset{(i)}{\le}\!\kappa\ln\big(33L_1\|y_0-y^*(x_0)\|/(2\epsilon^2)\big)\!+\!\frac{\kappa T'}{3} \frac{1}{T'}\sum_{t=0}^{T'-1} \ln\big(4\|s_{t-1}\|^3+4\|s_t\|^3\big)\!\nonumber\\
    &\quad+\kappa T'\ln \big(66L_1\epsilon^{-2}(1+\kappa)\big) \nonumber\\
    &\overset{(ii)}{\le} \kappa\ln\big(33L_1\|y_0-y^*(x_0)\|/(2\epsilon^2)\big) + \frac{\kappa T'}{3} \ln\Big(\frac{4}{T'}\sum_{t=0}^{T'-1} \big(\|s_{t-1}\|^3+\|s_t\|^3\big)\Big)\nonumber\\
    &\quad+\kappa T'\ln \big(66L_1\epsilon^{-2}(1+\kappa)\big) \nonumber\\
    &\overset{(iii)}{\le} \kappa\ln\big(33L_1\|y_0-y^*(x_0)\|/(2\epsilon^2)\big) + \frac{\kappa T'}{3} \ln\Big(\frac{4(H_0-H^*)}{3L_{\Phi}T'}\Big)\nonumber\\
    &\quad +\kappa T'\ln \big(66L_1\epsilon^{-2}(1+\kappa)\big)  \nonumber\\
    &=\widetilde{\mathcal{O}}(\kappa T') \le \widetilde{\mathcal{O}} \big(\sqrt{L_2}\kappa^{2.5}\epsilon^{-3}\big)\nonumber
\end{align}
where (i) uses $(a+b)^3\le 4(a^3+b^3)$ for any $a,b\ge 0$, (ii) applies Jensen's inequality to the concave function $\ln(\cdot)$, and (iii) telescopes eq. \eqref{eq: potential} over $t=0,1,\ldots,T'-1$.
\end{proof}

\section{Proof of Theorem \ref{thm: SGA}}\label{proof_SGA}
\thmSGA*
\begin{proof}
    Based on Theorem 3.1 of \cite{harvey2019simple}, the inner SGA steps in Algorithm \ref{algo: cubic-minimax-stoc} has the following convergence rate with probability at least $1-\delta$.
    \begin{align}
        f(x_t,y^*(x_t))-f(x_t,y_{t+1})\le \mathcal{O}\Big(\frac{L_0\ln(1/\delta)+L_0^2}{\mu N_t}\Big),
    \end{align}
    which by $\mu$-strong concavity of $f(x_t,\cdot)$ proves the following convergence rate.
    \begin{align}
        \|y_{t+1}-y^*(x_t)\|\le \sqrt{2\big(f(x_t,y^*(x_t))-f(x_t,y_{t+1})\big)/\mu} \le \mathcal{O}\Big(\sqrt\frac{L_0\ln(1/\delta)+L_0^2}{\mu^2 N_t}\Big).\nonumber
    \end{align}
\end{proof}

\section{Proof of \Cref{thm: 5}} \label{sec_thm5proof}
\thmstoc*
\red{Compared with deterministic optimization, the stochastic gradient error $\|\widehat{\nabla}_1 f(x_t, y_{t+1})-\nabla\Phi(x_t)\|$ results from not only the gradient ascent induced error $\|\nabla_1 f(x_t, y_{t+1})-\nabla\Phi(x_t)\|$, but also stochastic approximation error $\|\widehat{\nabla}_1 f(x_t,y_t)-\nabla_1 f(x_t,y_t)\|$. The stochastic Hessian error $\|\widehat{G}(x_t, y_{t+1})-\nabla^2\Phi(x_t)\|$ is similar. Hence, the main idea is to show that after incorporating the stochastic approximation errors from Lemma \ref{lemma: batch}, the stochastic gradient and Hessian errors still satisfy the conditions \eqref{eq: grad_goal_noKL} \& \eqref{eq: Hess_goal_noKL}, which by Lemma \ref{lemma_general_rate} shows the desired convergence result.}
\begin{proof}
    In Lemma \ref{lemma: batch}, replace $x, y$ with $x_t, y_t$, $B_1$ with $B_1(t)$, and $B_{k,\ell}$ with $B_{k,\ell}(t)$ for any $k,\ell\in\{1,2\}$, and substitute the following hyperparameters.
    \begin{align}
        \epsilon_1(t)=&\frac{L_{\Phi}}{2}\Big(\|s_t\|^2+\frac{\epsilon^2}{33L_{\Phi}}\Big)\wedge 2L_0=\mathcal{O}(L_{\Phi}\|s_t\|^2+\epsilon^2), \nonumber\\
        \epsilon_2(t)=&\frac{L_{\Phi}}{2(\kappa+1)^2}\Big(\|s_t\|+\frac{\epsilon}{\sqrt{33L_{\Phi}}}\Big)\wedge 4L_1=\mathcal{O}\big(\kappa^{-2}(L_{\Phi}\|s_t\|+\sqrt{L_{\Phi}}\epsilon)\big). \nonumber
    \end{align}
    Then, we obtain that using the following batchsizes 
    \begin{align}
        \left\{ \begin{gathered}
           |B_1(t)|\ge \mathcal{O}\Big(\frac{L_0^2}{(L_{\Phi}^2\|s_t\|^4 + \epsilon^4)}\ln\frac{m}{\delta}\Big) \hfill \\
           |B_{k,\ell}(t)|\ge \mathcal{O}\Big(\frac{L_1^2\kappa^4}{L_{\Phi}(L_{\Phi}\|s_t\|^2 + \epsilon^2)}\ln\frac{m+n}{\delta}\Big) \hfill 
        \end{gathered}  \right.,\label{eq: batchsizes}
    \end{align}
    the stochastic approximators satisfy the following error bounds with probability at least $1-\delta$.
    \begin{align}
        \|\widehat{\nabla}_1 f(x_t,y_t)-\nabla_1 f(x_t,y_t)\|&\le \epsilon_1(t) \le\frac{L_{\Phi}}{2}\Big(\|s_t\|^2+\frac{\epsilon^2}{33L_{\Phi}}\Big),\label{eq: stoc_err1_detail}\\ 
        \|\widehat{G}(x_t,y_t)-G(x_t,y_t)\|&\le  \epsilon_2(t)\le \frac{L_{\Phi}}{2}\Big(\|s_t\|+\frac{\epsilon}{\sqrt{33L_{\Phi}}}\Big).\label{eq: stoc_err3_detail}
    \end{align}
    Based on Theorem \ref{thm: SGA}, using the following number of stochastic gradient ascent steps
    \begin{align}
        N_t&\ge \mathcal{O}\Big(\frac{L_0\ln(1/\delta)+L_0^2}{\mu^2L_{\Phi}^2\big(([\|s_t\|^2+\epsilon^2/(33L_{\Phi})]/L_1) \wedge ([\|s_t\|+\epsilon/\sqrt{33L_{\Phi}}]/L_G)\big)^2}\Big)\nonumber\\
        &\overset{(i)}{=}\mathcal{O}\Big(\frac{L_0\ln(1/\delta)+L_0^2}{\big(([L_{\Phi}\|s_t\|^2+\epsilon^2]/\kappa)\wedge L_1(\|s_t\|+\epsilon/\sqrt{L_{\Phi}})\big)^2}\Big)\nonumber\\
        &=\mathcal{O}\Big(\frac{L_0\ln(1/\delta)+L_0^2}{\kappa^{-2}(L_{\Phi}^2\|s_t\|^4+\epsilon^4)\wedge L_1^2(\|s_t\|^2+\epsilon^2/L_{\Phi})}\Big), \label{eq: Ntstoc}
    \end{align} 
    where (i) uses $\kappa=L_1/\mu$ and $L_{\Phi}=L_G(1+\kappa)$ (\Cref{prop_Phiystar2}). 
    we have
    \begin{align}
        \|y_{t+1}-y^*(x_t)\|&\le \mathcal{O}\Big(\sqrt\frac{L_0\ln(1/\delta)+L_0^2}{\mu^2 N_t}\Big)\nonumber\\
        &\le \frac{L_{\Phi}}{2}\min\Big(\frac{1}{L_1}\Big(\|s_t\|^2+\frac{\epsilon^2}{33L_{\Phi}}\Big), \frac{1}{L_G}\Big(\|s_t\|+\frac{\epsilon}{\sqrt{33L_{\Phi}}}\Big)\Big). \label{eq: yerr_stoc}
    \end{align}
    Therefore, 
    \begin{align}
        &\|\nabla\Phi(x_t)-\widehat{\nabla}_1 f(x_t, y_{t+1})\|\nonumber\\
        &\overset{(i)}{\le}\|\nabla_1 f(x_t,y^*(x_t))-\nabla_1 f(x_t, y_{t+1})\|+\|\widehat{\nabla}_1 f(x_t, y_{t+1})-\nabla_1 f(x_t, y_{t+1})\|\nonumber\\
        &\overset{(ii)}{\le} L_1\|y_{t+1}-y^*(x_t)\| + \frac{L_{\Phi}}{2}\Big(\|s_t\|^2+\frac{\epsilon^2}{33L_{\Phi}}\Big) \nonumber\\
        &\overset{(iii)}{\le} L_{\Phi}\Big(\|s_t\|^2+\frac{\epsilon^2}{33L_{\Phi}}\Big),\label{eq: dfstoc}
    \end{align}
    where (i) uses $\nabla\Phi(x)=\nabla_1 f(x,y^*(x))$ in \Cref{prop_Phiystar2}, (ii) uses eq. \eqref{eq: stoc_err1_detail} and \Cref{assum: fcubic}, and (iii) uses eq. \eqref{eq: yerr_stoc}. 
    \begin{align}
        &\|\nabla^2\Phi(x_t)-\widehat{G}(x_t,y_{t+1})\|\nonumber\\
        &\overset{(i)}{\le}\|G(x_t,y^*(x_t))-G(x_t,y_{t+1})\|+\|\widehat{G}(x_t,y_{t+1})-G(x_t,y_{t+1})\|\nonumber\\
        &\overset{(ii)}{\le} L_G\|y_{t+1}-y^*(x_t)\| + \frac{L_{\Phi}}{2}\Big(\|s_t\|+\frac{\epsilon}{\sqrt{33L_{\Phi}}}\Big) \nonumber\\
        &\overset{(iii)}{\le} L_{\Phi}\Big(\|s_t\|+\frac{\epsilon}{\sqrt{33L_{\Phi}}}\Big),\label{eq: dGstoc}
    \end{align}
    where (i) uses $\nabla^2\Phi(x)=G(x,y^*(x))$ in \Cref{prop_Phiystar2}, (ii) uses eq. \eqref{eq: stoc_err3_detail} and the item \ref{item: G_lipschitz} of \Cref{prop_Phiystar2}, and (iii) uses eq. \eqref{eq: yerr_stoc}. Eqs. \eqref{eq: dfstoc} \& \eqref{eq: dGstoc} imply that the conditions \eqref{eq: grad_goal_noKL} \& \eqref{eq: Hess_goal_noKL} hold with $\alpha=\beta=L_{\Phi}=L_2(1+\kappa)^3$ and $\epsilon'=\frac{\epsilon}{\sqrt{33L_{\Phi}}}$. In Lemma \ref{lemma_general_rate}, by substituting these values of $\alpha$, $\beta$, $\epsilon'$ and $\eta_x=(55L_{\Phi})^{-1}$, we obtain that when $T\ge \frac{\Phi(x_0)-\Phi^*+8L_{\Phi}\epsilon'^2}{3L_{\Phi}\epsilon'^3}$,  we have $T'\le \sqrt{33L_{\Phi}}\frac{33(\Phi(x_0)-\Phi^*)+8\epsilon^2}{3\epsilon^3}=\mathcal{O}\big(\sqrt{L_2}\kappa^{1.5}\epsilon^{-3}\big)\le T$, and moreover,
    \begin{align}
        &\|\nabla \Phi(x_{T'})\| \le \Big(\frac{1}{2\eta_x}+L_{\Phi}+2\alpha+2\beta\Big) \epsilon'^2 \le \epsilon^2, \nonumber\\
        &\lambda_{\min}\big(\nabla^2 \Phi(x_{T'})\big) \ge -\Big(\frac{1}{2\eta_x}+L_{\Phi}+2\alpha\Big)\epsilon' \overset{(i)}{\ge} -\sqrt{33L_{\Phi}}\epsilon, \nonumber
    \end{align}
    This proves that eq. \eqref{eq: gH_rate2} holds with probability at least $1-\delta$. 
    
    Choosing both $N_t$ in eq. \eqref{eq: Ntstoc} and the batchsizes in eq. \eqref{eq: batchsizes} with equality, the number of gradient computations has the following upper bound. 
    \begin{align}
        &\sum_{t=0}^{T'-1} \big(N_t+|B_1(t)|\big)\nonumber\\
        &=\sum_{t=0}^{T'-1}\mathcal{O} \Big( \frac{L_0\ln(1/\delta)+L_0^2}{\kappa^{-2}(L_{\Phi}^2\|s_t\|^4+\epsilon^4)\wedge L_1^2(\|s_t\|^2+\epsilon^2/L_{\Phi})} + \frac{L_0^2}{L_{\Phi}^2\|s_t\|^4 + \epsilon^4}\ln\frac{m}{\delta}\Big) \nonumber\\
        &\le \sum_{t=0}^{T'-1}\mathcal{O} \Big( \frac{L_0\ln(1/\delta)+L_0^2}{\epsilon^4\kappa^{-2}\wedge L_1^2\epsilon^2/L_{\Phi}} + \frac{L_0^2}{\epsilon^4}\ln\frac{m}{\delta}\Big) \nonumber\\
        &\overset{(i)}{\le} 
        T'\mathcal{O}\Big(\frac{L_0\kappa^2}{\epsilon^4}\big(\ln(1/\delta)+L_0\big) + \frac{L_0^2}{\epsilon^4}\ln\frac{m}{\delta}\Big) \nonumber\\
        &\overset{(ii)}{\le} \mathcal{O}\big(\sqrt{L_2}\kappa^{1.5}\epsilon^{-3}\big) \mathcal{O}\Big(\frac{L_0^2\kappa^2}{\epsilon^4}\ln\frac{m}{\delta}\Big) \nonumber\\
        &\le \mathcal{O}\Big(\frac{L_0^2\kappa^{3.5}\sqrt{L_2}}{\epsilon^7}\ln\frac{m}{\delta}\Big),
    \end{align}
    where (i) uses $\epsilon'=\frac{\epsilon}{\sqrt{33L_{\Phi}}}\le \frac{L_1}{L_G}=\frac{L_1(1+\kappa)}{L_{\Phi}}$ which implies that $\epsilon^4\kappa^{-2} \le \mathcal{O}(L_1^2\epsilon^2/L_{\Phi})$, (ii) uses $T'\le\mathcal{O}\big(\sqrt{L_2}\kappa^{1.5}\epsilon^{-3}\big)$ we proved above. The number of Hessian computations has the following upper bound.
    \begin{align}
        &\sum_{t=0}^{T'-1} \sum_{k=1}^2\sum_{\ell=1}^2|B_{k,\ell}(t)|\nonumber\\
        &=\sum_{t=0}^{T'-1}\mathcal{O} \Big(\frac{L_1^2\kappa^4}{L_{\Phi}(L_{\Phi}\|s_t\|^2 + \epsilon^2)}\ln\frac{m+n}{\delta}\Big) \nonumber\\
        &\le \sum_{t=0}^{T'-1}\mathcal{O} \Big(\frac{L_1^2\kappa^4}{L_{\Phi}\epsilon^2}\ln\frac{m+n}{\delta}\Big) \nonumber\\
        &\overset{(i)}{\le} 
        T'\mathcal{O}\Big( \frac{L_1^2\kappa}{L_2\epsilon^2}\ln\frac{m+n}{\delta}\Big) \nonumber\\
        &\overset{(ii)}{\le} \mathcal{O}\Big( \frac{L_1^2\kappa^{2.5}}{\sqrt{L_2}\epsilon^5}\ln\frac{m+n}{\delta}\Big)
    \end{align}
    where (i) uses $L_{\Phi}=L_2(1+\kappa)^3=\mathcal{O}(L_2\kappa^3)$, (ii) uses $T'\le \sqrt{33L_{\Phi}}\frac{33(\Phi(x_0)-\Phi^*)+8\epsilon^2}{3\epsilon^3}=\mathcal{O}\big(\sqrt{L_2}\kappa^{1.5}\epsilon^{-3}\big)$ we proved above. 
\end{proof}

%\section{Convergence Rate of SGA} \label{sec_thm4proof}

\section{Solving Cubic-Regularization problem}
% The minimax optimization problem \eqref{CR_minimax} can be solved using standard GDA algorithm as shown in Algorithm \ref{algo: Cubic_solve}. 
In this section, %we present the cubic-regularization solvers below, including GDA-Cubic Solver (Algorithm \ref{algo: Cubic_solve}) and GDA-Cubic FinalSolver (Algorithm \ref{algo: Cubic_solve_final}). Then, 
we obtain the properties of the GDA-Cubic Solver (Algorithm \ref{algo: Cubic_solve}) and GDA-Cubic FinalSolver (Algorithm \ref{algo: Cubic_solve_final}) in Lemmas \ref{lemma:smallg}-\ref{lemma:CRfinal}, and show in Proposition \ref{prop: lyapunov_cubic_inexact} that Algorithm \ref{algo: cubic-minimax-inexact} using these cubic solvers admits an intrinsic potential function $H_t$ (see \Cref{prop: lyapunov_cubic}) that monotonically decreases over the iterations. Finally, using these lemmas and Proposition \ref{prop: lyapunov_cubic_inexact}, we will prove Theorem \ref{thm: inexact} on the computation complexity of Algorithm \ref{algo: cubic-minimax-inexact}.

% Throughout this Section, we will use the following notations where $c$ is an absolute constant.
% $$A:=H_{11}-H_{12}H_{22}^{-1}H_{21}, \quad s_{\max}:=7L_1\kappa\eta_x.$$
\begin{lemma}\label{lemma:smallg}
    For any $0<\delta'<1$, when $\|g\|\le 4L_1^2\kappa^2\eta_x$, implement Algorithm \ref{algo: Cubic_solve} with initialization $s_0'=0$ and hyperparameters  $K=\Theta\Big(L_1\kappa\eta_x\epsilon'^{-1}\Big[ \ln\Big(1+\frac{\sqrt{m}(L_1^2\kappa^2\eta_x+L_{\Phi}\epsilon'^2)}{L_{\Phi}\epsilon'^2\delta'}\Big)+\ln(L_1\kappa\eta_x\epsilon'^{-1})\Big]\Big)]$, $N_k'=N'=\Theta\Big(\frac{\ln[(L_1^3\kappa^3\eta_x^2)/(L_{\Phi}\epsilon'^3)]}{\ln[(1-\kappa^{-1})^{-1}]}\Big)$, $\eta_v=\frac{1}{L_1}$, $\eta_s=\frac{1}{22L_1\kappa}$, $\sigma=\frac{L_{\Phi}\epsilon'^3}{108L_1\kappa\eta_x}$. Then, the output $s_K'$ satisfies the following inequalities with probability at least $1-\delta'$ where $s^*:=\mathop{\arg\min}_s \phi(s)$.
    \begin{align}
        %&\|s_K'\|\le\|s^*\|+\frac{\epsilon''}{165L_1^2\kappa^2\eta_x}\label{sKup_smallg}\\
        &\phi(s_K')-\phi(s^*)\le \frac{\epsilon'\|s^*\|^2}{240\eta_x}+2L_{\Phi}\epsilon'^3\label{phisK_smallg}\\
        &\|s^*\|\le 2\|s_K'\|+\frac{\epsilon'}{20}+\frac{24\eta_xL_{\Phi}\epsilon'^3}{\|s^*\|^2} \label{sKdown_smallg}
    \end{align}
\end{lemma}
\red{Lemma \ref{lemma:smallg} provides convergence properties of GDA-Cubic Solver (Algorithm \ref{algo: Cubic_solve}) in the small gradient case $\|g\|\le 4L_1^2\kappa^2\eta_x$. The key to prove the convergence rate \eqref{phisK_smallg} is to equivalently write the gradient descent step \eqref{s_update} as gradient descent on the CR objective function $\phi_{N'}$ defined by eq. \eqref{CR_N} below, which is $\epsilon'$-close to the target CR objective function $\phi$ with sufficiently large number $N'$ of gradient ascent steps. Therefore, we can leverage the existing convergence result of gradient descent on $\phi_{N'}$ from \cite{carmon2019gradient} and extend the result to $\phi$. The CR function $\phi$ is also close to convex, so small $\phi(s_K')-\phi(s^*)$ means $s^*$ is not far from $s_K'$, which implies eq. \eqref{sKdown_smallg}.}
\begin{proof}
    The gradient ascent step \eqref{v_update} can be rewritten as $v_{k,\ell+1}=(I+\eta_vH_{22})v_{k,\ell}+\eta_vH_{12}^{\top}s_k'$. By iterating it over $\ell=0,1,\ldots,N'$ and using $v_{k,0}=0$, we obtain that
    \begin{align}
        v_k=v_{k,N'}=\eta_v\sum_{\ell=0}^{N'-1}(I+\eta_vH_{22})^{\ell}H_{12}^{\top}s_k'=-H_{22}^{-1}\big(I-(I+\eta_vH_{22})^{N'}\big)H_{12}^{\top}s_k'.\label{vk}
    \end{align}
    By substituting the above equality, the gradient descent step \eqref{s_update} can be rewritten as
    \begin{align}
        s_{k+1}'\!=\!s_k'\!-\!\eta_s\Big(\!g\!+\!\sigma\xi+A_{N'}s_k'+\frac{\|s_k'\|}{2\eta_x}s_k'\!\Big), \text{where}~A_{N'}:=H_{11}-H_{12}H_{22}^{-1}\big(I-(I+\eta_vH_{22})^{N'}\big)H_{12}^{\top}. \label{s_update2}
    \end{align}
    It can be easily verified that the above update rule \eqref{s_update2} can be seen as gradient descent steps with random perturbation $\sigma\xi$ on the following cubic-regularization problem
    \begin{align}
        s_{N'}^*=\arg\min_s \phi_{N'}(s):=g^{\top}s+\frac{1}{2}s^{\top}A_{N'}s+\frac{1}{6\eta_x}\|s\|^3 \label{CR_N}
    \end{align}
    Note that
    \begin{align}
        \|A_{N'}\|\le& \|H_{11}\|+\|H_{12}\|\|H_{22}^{-1}\|\big\|I-(I+\eta_vH_{22})^{N'}\big\|\|H_{12}\|\nonumber\\
        \overset{(i)}{\le}& \|H_{11}\|+\|H_{12}\|\|H_{22}^{-1}\|\|H_{12}\| \nonumber\\
        \overset{(ii)}{\le}& L_1+L_1\mu^{-1}L_1\le A_{\max}:=2L_1\kappa \label{AN_bound}
    \end{align}
    where (i) uses $-L_1I\preceq H_{22}\preceq -\mu I$ (Assumption \ref{assum: fcubic}) and $\eta_v=1/L_1$ which imply that $O\preceq I+\eta_v H_{22}\preceq (1-\kappa^{-1})I$ and thus $\|I-(I+\eta_vH_{22})^{N'}\|\le 1$, and (ii) uses $\|H_{11}\|\le L_1$, $\|H_{12}\|\le L_1$, $\|H_{22}^{-1}\|\le \mu^{-1}$ (Assumption \ref{assum: fcubic}). Similarly, we obtain that
    \begin{align}
        \|A\|\le& \|H_{11}\|+\|H_{12}\|\|H_{22}^{-1}\|\big\|H_{12}\le A_{\max}:=2L_1\kappa, \label{A_bound}
    \end{align}
    and that
    \begin{align}
        \|A_{N'}-A\|=\|(I+\eta_vH_{22})^{N'}\|\overset{(i)}{\le} (1-\kappa^{-1})^{N'}\|A_0-A\|\overset{(ii)}{\le}\frac{L_{\Phi}\epsilon'^3}{(7L_1\kappa\eta_x)^2},\label{Adiff}
    \end{align}
    where (i) uses $-L_1I\preceq H_{22}\preceq -\mu I$ (Assumption \ref{assum: fcubic}) and $\eta_v=1/L_1$ which imply that $O\preceq I+\eta_v H_{22}\preceq (1-\kappa^{-1})I$, and (ii) uses $N'=\frac{\ln[(98L_1^3\kappa^3\eta_x^2)/(L_{\Phi}\epsilon'^3)]}{\ln[(1-\kappa^{-1})^{-1}]}$, $A_0=O$ and $\|A\|\le 2L_1\kappa$. 
    
    Hence, the optimal solutions $s^*:=\arg\min_s \phi(s)$ satisfies
    \begin{align}
        \|s^*\|&\overset{(i)}{\le} \|A\|\eta_x+\sqrt{(\|A\|\eta_x)^2+2\eta_x\|g\|} \nonumber\\
        &\overset{(ii)}{\le} 2\|A\|\eta_x+\sqrt{2\eta_x\|g\|} \nonumber\\
        &\overset{(iii)}{\le} 7L_1\kappa\eta_x:=s_{\max},\label{s_upper}
    \end{align} 
    where the proof of (i) is in eq. (7a) of \cite{carmon2019gradient}, (ii) uses the inequality that $\sqrt{a+b}\le\sqrt{a}+\sqrt{b}$ for any $a, b\ge 0$, and (iii) uses $\|A\|\le 2L_1\kappa$ and $\|g\|\le 4L_1^2\kappa^2\eta_x$. Similarly, $s_{N'}^*:=\argmin_s \phi_{N'}(s)$ satisfies 
    \begin{align}
        \|s_{N'}^*\|\le s_{\max}. \label{sN_upper}
    \end{align}
    Then, based on Lemma 2.6 and Theorem 3.2 of \cite{carmon2019gradient}, the gradient descent step \eqref{s_update2} of the cubic-regularization problem \eqref{CR_N} yields that $\|s_k'\|\le\|s_{N'}^*\|\le s_{\max}$ for all $k$ and that $\phi_{N'}(s_k')\le \phi_{N'}(s_{N'}^*)+\frac{\epsilon'\|s_{N'}^*\|^2}{240\eta_x}+L_{\Phi}\epsilon'^3$ with probability at least $1-\delta'$, using the hyperparameter choices below which can be easily verified to be satisfied by those used in this Lemma. 
    \begin{align}
        s_0'=&0\nonumber\\
        \eta_s=&\frac{1}{4\big(A_{\max}+s_{\max}/(2\eta_x)\big)}=\frac{1}{22L_1\kappa}\nonumber\\
        \sigma=&\frac{[\epsilon'\|s_{N'}^*\|^2/(240\eta_x)+L_{\Phi}\epsilon'^3]/(2\eta_x)}{A_{\max}+\|s_{N'}^*\|/\eta_x}\frac{\overline{\sigma}}{12}=\frac{(2L_{\Phi}\epsilon'^3)/(2\eta_x)}{12(2L_1\kappa+s_{\max}/\eta_x)}=\frac{L_{\Phi}\epsilon'^3}{108L_1\kappa\eta_x}\label{sigma}\\
        K\ge&\frac{6\ln\Big(1+\frac{3\sqrt{m}}{\delta'\sigma_{\min}}\Big)+14\ln\frac{\|s_{N'}^*\|^2(A_{\max}+s_{\max}/\eta_x)}{\epsilon'\|s_{N'}^*\|^2/(240\eta_x)+L_{\Phi}\epsilon'^3}}{(1/2)\eta_s}\frac{10\|s_{N'}^*\|^2}{\epsilon'\|s_{N'}^*\|^2/(240\eta_x)+L_{\Phi}\epsilon'^3}\label{K}\\
        =&\frac{211200L_1\kappa\eta_x\Big[3\ln\Big(1+\frac{3\sqrt{m}(5L_1^2\kappa^2\eta_x+24L_{\Phi}\epsilon'^2)}{10L_{\Phi}\epsilon'^2\delta'}\Big)+7\ln\frac{2160L_1\kappa\eta_x}{\epsilon'+240\eta_xL_{\Phi}\epsilon'^3\|s_{N'}^*\|^{-2}}\Big]}{\epsilon'+240\eta_xL_{\Phi}\epsilon'^3\|s_{N'}^*\|^{-2}},\nonumber
    \end{align} 
    where eq. \eqref{sigma} corresponds to $\overline{\sigma}=\frac{A_{\max}+\|s_{N'}^*\|/\eta_x}{A_{\max}+s_{\max}/\eta_x}\frac{2L_{\Phi}\epsilon'^3}{\epsilon'\|s_{N'}^*\|^2/(240\eta_x)+L_{\Phi}\epsilon'^3}\in\big[\sigma_{\min},1\big]$ ($\sigma_{\min}:=\frac{10L_{\Phi}\epsilon'^2}{5L_1^2\kappa^2\eta_x+24L_{\Phi}\epsilon'^2}$, since $0\le \|s_{N'}^*\|\le s_{\max}=7L_1\kappa\eta_x$) defined in Theorem 3.2 of \cite{carmon2019gradient}, which yields $\sigma_{\min}$ and $(1/2)\eta$ in eq. \eqref{K}.

    Then, eq. \eqref{phisK_smallg} can be proved as follows
    \begin{align}
        \phi(s_K')-\phi(s^*)&= \big(\phi(s_K')-\phi_{N'}(s_K')\big)+\big(\phi_{N'}(s_K')-\phi_{N'}(s_{N'}^*)\big)+\big(\phi_{N'}(s_{N'}^*)-\phi(s^*)\big)\nonumber\\
        &\overset{(i)}{\le} \big(\phi(s_K')-\phi_{N'}(s_K')\big)+\frac{\epsilon'\|s_{N'}^*\|^2}{240\eta_x}+L_{\Phi}\epsilon'^3+\big(\phi_{N'}(s^*)-\phi(s^*)\big)\nonumber\\
        &\overset{(ii)}{\le}\frac{1}{2}s_K'^{\top}(A-A_{N'})s_K'+\frac{\epsilon'\|s_{N'}^*\|^2}{240\eta_x}+L_{\Phi}\epsilon'^3+\frac{1}{2}s^{*\top}(A-A_{N'})s^*,\nonumber\\
        &\overset{(iii)}{\le} (7L_1\kappa\eta_x)^2\cdot \frac{L_{\Phi}\epsilon'^3}{(7L_1\kappa\eta_x)^2}+\frac{\epsilon'\|s_{N'}^*\|^2}{240\eta_x}+L_{\Phi}\epsilon'^3\nonumber\\
        &=\frac{\epsilon'\|s_{N'}^*\|^2}{240\eta_x}+2L_{\Phi}\epsilon'^3
    \end{align}
    where (i) uses $\phi_{N'}(s_k')\le \phi_{N'}(s_{N'}^*)+\frac{\epsilon'\|s_{N'}^*\|^2}{240\eta_x}+L_{\Phi}\epsilon'^3$ and $\phi_{N'}(s_{N'}^*)\le\phi_{N'}(s^*)$, (ii) uses the definitions of $\phi(\cdot)$ and $\phi_{N'}(\cdot)$ in eqs. \eqref{CR} \& \eqref{CR_N} respectively, (iii) uses eq. \eqref{Adiff} and $\max(\|s^*\|,\|s_k'\|)\le s_{\max}:=7L_1\kappa\eta_x$. 
    %the property that $A:=H_{11}-H_{12}H_{22}^{-1}H_{21}$ is symmetric and the fact proved above that $\phi(s_k'')\le \phi(s^*)+\frac{\epsilon'\|s^*\|^2}{240\eta_x}+L_{\Phi}\epsilon'^3$ with probability at least $1-\delta'$, (ii) uses $\|g\|\le 4L_1^2\kappa^2\eta_x$, $\|A\|\le 2L_1\kappa$, $\|s_k'\|\le 2s_{\max}$, $\|s_k''\|\le s_{\max}$ and $\big|\|s_k''\|-\|s_k'\|\big|\le \|s_k''-s_k'\|$, (iii) uses eq. \eqref{ds2}, $\|s_k'\|\le s_{\max}$, $\|s_k''\|\le 2s_{\max}$ and $s_{\max}=7L_1\kappa\eta_x$, and (iv) uses $s_{\max}=7L_1\kappa\eta_x$.
    
    Eq. \eqref{sKdown_smallg} can be proved as follows.
    \begin{align}
        %\epsilon''=
        \frac{\epsilon'\|s^*\|^2}{240\eta_x}+2L_{\Phi}\epsilon'^3\overset{(i)}{\ge}&\phi(s_K')-\phi(s^*)\nonumber\\
        \overset{(ii)}{=}&\frac{1}{2}(s_K'-s^*)^{\top}\Big(A+\frac{\|s^*\|}{2\eta_x}I\Big)(s_K'-s^*)+\frac{1}{12\eta_x}(\|s^*\|-\|s_K'\|)^2(\|s^*\|+2\|s_K'\|)\nonumber\\
        \overset{(iii)}{\ge}& \frac{\|s^*\|}{12\eta_x}(\|s^*\|^2+\|s_K'\|^2-2\|s_K'\|\|s^*\|)\nonumber\\
        \ge& \frac{\|s^*\|^2}{12\eta_x}(\|s^*\|-2\|s_K'\|),\nonumber
    \end{align}
    where (i) uses eq. \eqref{phisK_smallg} proved above, (ii) uses eq. (6) of \cite{carmon2019gradient} and (iii) uses Proposition 2.1 of \cite{carmon2019gradient} which states that $A+\frac{\|s^*\|}{2\eta_x}I\succeq O$.
    % ***********(To remove)***************
    % $\eta_x=\frac{1}{2L_2}~vs~\eta_x=\frac{c}{L_2\kappa^3}$, $\eta_s=\frac{1}{4L_1(3+2\kappa)}$
\end{proof}

\begin{lemma}\label{lemma:bigg}
    When $\|g\|>4L_1^2\kappa^2\eta_x$, implement Algorithm \ref{algo: Cubic_solve} with hyperparameters $K\ge\frac{2\ln[100/(7\eta_x)]}{\ln[(1-\kappa^{-1})^{-1}]}$, $\eta_v=1/L_1$ and initialize $s_0=0$. Then, the output $s_K'$ satisfies $\phi(s_K')\le \frac{7}{20}\sqrt{\frac{\epsilon^3}{L_2}}$ with probability at least $1-\delta$. Correspondingly, the approximate CR solution $s_t$ in Algorithm \ref{algo: cubic-minimax} satisfies 
    \begin{align}
       \|s_K'\|\ge& L_1\kappa\eta_x\label{sK_lower}\\
       \phi(s_K')\le&-\frac{1}{4}L_1^3\kappa^3\eta_x^2
        \label{m_upper}
    \end{align}
\end{lemma}
\red{Lemma \ref{lemma:bigg} provides the properties of the solution given by GDA-Cubic Solver (Algorithm \ref{algo: Cubic_solve}) in the large gradient case $\|g\|>4L_1^2\kappa^2\eta_x$. The main idea of proof is to show that $s_K'=-\frac{\gamma_K}{\|g\|}g$ is close to $-\frac{\gamma^*}{\|g\|}g$, so the properties about $s_K'$ can be approximately obtained by studying $-\frac{\gamma^*}{\|g\|}g$.% which is easier since $\gamma^*$ has analytical expression. 
 To show that $\gamma_K\approx \gamma^*$, we only need to show that $w_K$ obtained by gradient ascent step of $\mu$-strongly concave function converges to $w^*$ exponentially fast as shown in eq. \eqref{w_rate}, and that there exists a Lipschitz continuous function $\xi$ such that $\gamma_K=\xi\Big(\frac{g^{\top}}{\|g\|}H_{11}\frac{g}{\|g\|}-\frac{(H_{21}g)^{\top}w_K}{\|g\|}\Big)$ and $\gamma^*=\xi\Big(\frac{g^{\top}}{\|g\|}H_{11}\frac{g}{\|g\|}-\frac{(H_{21}g)^{\top}w^*}{\|g\|}\Big)$.}
\begin{proof}
    $\gamma^*$ has the following lower bound.
    \begin{align}
        \gamma^*&=\sqrt{\Big(\frac{\eta_x g^{\top}Ag}{\|g\|^2}\Big)^2+2\eta_x\|g\|}-\frac{\eta_x g^{\top}Ag}{\|g\|^2}\nonumber\\
        &\overset{(i)}{\ge}\eta_x\Bigg(\sqrt{\Big(\frac{g^{\top}Ag}{\|g\|^2}\Big)^2+8L_1^2\kappa^2}-\frac{g^{\top}Ag}{\|g\|^2}\Bigg)\nonumber\\
        &\overset{(ii)}{\ge}\frac{7}{5}L_1\kappa\eta_x\label{gamma_lower}
    \end{align}
    where (i) uses $\|g\|\ge 4L_1^2\kappa^2\eta_x$, and (ii) uses the monotonically decreasing property of the function $\sqrt{x^2+8L_1^2}-x$ and $\|A\|\le\|H_{11}\|+\|H_{12}\|\|H_{22}^{-1}\|\|H_{21}\|\le L_1(1+\kappa)\le 2L_1\kappa$ based on Lemma \ref{lemma_HessBound}. 

    $\gamma^*=\mathop{\arg\min}_{\gamma\ge0}\phi(-\gamma g/\|g\|)$ also satisfies the stationary condition below
    \begin{align}
        0=\frac{\partial \phi(-\gamma g)}{\partial \gamma}\Big|_{\gamma=\gamma^*}=-\|g\|+\frac{\gamma^*}{\|g\|^2} g^{\top}Ag+\frac{\gamma^{*2}}{2\eta_x}. \label{alpha_stationary}
    \end{align}
    
    Note that the gradient steps \eqref{w_update} aim at the $L_1$-smooth, $\mu$-strongly concave maximization problem $w^*:=\mathop{\arg\max}_w \frac{1}{2}w^{\top}H_{22}w-\frac{(H_{21}g)^{\top}}{\|g\|}w$. Therefore, these gradient steps with learning rate $\eta_v=1/L_1$ and initialization $w_0=0$ have the following convergence rate
    \begin{align}
        \|w_K-w^*\|\le (1-\kappa^{-1})^{K/2}\|w_0-w^*\|\overset{(i)}{\le} \frac{7}{100}\kappa\eta_x, \label{w_rate}
    \end{align}
    where (i) uses $w_0=0$, $\|w^*\|\le \big\|H_{22}^{-1}\frac{H_{21}g}{\|g\|}\big\|\le \|H_{22}^{-1}\|\|H_{21}\|\le \mu^{-1}L_1=\kappa$ and $K\ge\frac{2\ln[40/(7\eta_x)]}{\ln[(1-\kappa^{-1})^{-1}]}$. 
    
    Denote the function $\xi(u)=\sqrt{u^2+2\eta_x\|g\|}-u (u\in\mathbb{R})$. Then we have
    \begin{align}
        |\gamma_K-\gamma^*|=&\Big|\xi\Big(\frac{g^{\top}}{\|g\|}H_{11}\frac{g}{\|g\|}-\frac{(H_{21}g)^{\top}w_K}{\|g\|}\Big)-\xi\Big(\frac{g^{\top}}{\|g\|}H_{11}\frac{g}{\|g\|}-\frac{(H_{21}g)^{\top}w^*}{\|g\|}\Big)\Big|\nonumber\\
        \overset{(i)}{=}&\Big|\xi'\Big(\frac{g^{\top}}{\|g\|}H_{11}\frac{g}{\|g\|}-\frac{(H_{21}g)^{\top}[\omega w_K+(1-\omega)w^*]}{\|g\|}\Big)\frac{(H_{21}g)^{\top}(w_K-w^*)}{\|g\|}\Big|\nonumber\\
        \overset{(ii)}{\le}&\frac{7}{50}L_1\kappa\eta_x \overset{(iii)}{\le}0.1\gamma^* \nonumber
    \end{align}
    where (i) applies the Lagrange Mean Value Theorem to the function $\xi$ with $\omega\in[0,1]$, (ii) uses $|\xi'(x)|=\big|\frac{x}{\sqrt{x^2+2\eta_x\|g\|}}-1\big|\le 2$, $\|H_{21}\|\le L_1$ and eq. \eqref{w_rate}, and (iii) uses eq. \eqref{gamma_lower}. The above inequality implies that
    \begin{align}
        0.9\gamma^*\le\gamma_K\le 1.1\gamma^*\label{gamma_range}
    \end{align}
    Therefore, eq. \eqref{sK_lower} can be proved as follows.
    \begin{align}
        \|s_K'\|\overset{(i)}{=}\gamma_K\overset{(ii)}{\ge}0.9\gamma^*\overset{(iii)}{\ge}\frac{7(0.9)}{5}L_1\kappa\eta_x\ge L_1\kappa\eta_x\nonumber
    \end{align}
    where (i) uses $s_K'=-\frac{\gamma_K}{\|g\|}g$, (ii) uses eq. \eqref{gamma_range}, and (iii) uses eq. \eqref{gamma_lower}.
    % \begin{align}
    %     \|s^*\|\overset{(i)}{\ge}&\gamma^*\ge\frac{10}{11}\gamma_K\overset{(ii)}{\ge} 0.9\|s_K'\|\nonumber
    % \end{align}
    % where (i) uses eq. (8) of \cite{carmon2019gradient} and (ii) uses $s_K'=-\frac{\gamma_K}{\|g\|}g$. 
    
    Eq. \eqref{m_upper} can be proved as follows. 
    \begin{align}
        \phi(s_K')&=-\gamma_K \|g\|+\frac{\gamma_K^2}{2\|g\|^2}g^{\top}Ag+\frac{\gamma_K^3}{6\eta_x}\nonumber\\
        &\overset{(i)}{\le}-\frac{9\gamma^*}{10} \|g\|+\frac{(1.1\gamma^*)^2}{2\|g\|^2}g^{\top}Ag+\frac{(1.1\gamma^*)^3}{6\eta_x}\nonumber\\
        &\overset{(ii)}{\le}-\frac{\gamma^*}{4}\|g\|-\frac{(\gamma^*)^3}{20\eta_x}\nonumber\\
        &\overset{(iii)}{\le}-\frac{(1.4L_1\kappa\eta_x)(4L_1^2\kappa^2\eta_x)}{20}-\frac{(1.4L_1\kappa\eta_x)^3}{2500\eta_x}\nonumber\\
        &=-\frac{1}{4}L_1^3\kappa^3\eta_x^2\nonumber
    \end{align}
    where (i) uses eq. \eqref{gamma_range}, (ii) uses eq. \eqref{alpha_stationary}, and (iii) uses eq. \eqref{gamma_lower} and $\|g\|\ge 4L_1^2\kappa^2\eta_x$.
\end{proof}

\begin{lemma}\label{lemma:CRfinal}
Implement Algorithm \ref{algo: Cubic_solve_final} with initialization $s_0'=0$ and hyperparameters $K=\Theta\Big(\frac{L_1^2\kappa^2}{L_{\Phi}^2\epsilon'^2}\Big)$, $N_k'=N'=\frac{\ln[2L_1\kappa\epsilon'^{-1}\max(24\eta_x,7/L_{\Phi})]}{\ln[(1-\kappa^{-1})^{-1}]}$, $\eta_v=\frac{1}{L_1}$, $\eta_s=\frac{1}{22L_1\kappa}$. Then, if $\|s^*\|\le 3\epsilon'$ and $\|g\|\le 4L_1^2\kappa^2\eta_x$, the algorithm will terminate with $K'=\min\{k:\|g_k\|\le L_{\Phi}\epsilon'^2\}\le K$ and the output $s_{K'}'$ satisfies
\begin{align}
    &\|s_{K'}'\|\le 7\epsilon'. \label{snorm_final}\\
    &\|\nabla\phi(s_{K'}')\|=\Big\|g+As_{K'}'+\frac{\|s_{K'}'\|}{2\eta_x}s_{K'}'\Big\|\le 2L_{\Phi}\epsilon'^2.\label{grad_final}
\end{align}
\end{lemma}
\red{Lemma \ref{lemma:CRfinal} provides convergence properties of GDA-Cubic FinalSolver (Algorithm \ref{algo: Cubic_solve_final}) in the small gradient case $\|g\|\le 4L_1^2\kappa^2\eta_x$. Note that Algorithm \ref{algo: Cubic_solve_final} is noiseless version of Algorithm \ref{algo: Cubic_solve}. Hence, the proof logic is similar to Lemma \ref{lemma:smallg}. To elaborate, Algorithm \ref{algo: Cubic_solve_final} is equivalent to apply gradient descent to the approximate CR objective function $\phi_{N'}$ defined by eq. \eqref{CR_N}, which is $\epsilon'$-close to the target CR objective function $\phi$. Then based on the nice geometry of the two close CR objective functions, their optimizers $s_{N'}^*:=\argmin_s \phi_{N'}(s)$ and $s^*:=\argmin_s \phi(s)$ also have close norms such that $\|s^*\|\le 3\epsilon'$ implies $\|s_{N'}^*\|\le 7\epsilon'$, as shown by eqs. \eqref{dphi_final_low} \& \eqref{dphi_final_up}. Then eq. \eqref{snorm_final} follows from $\|s_{K'}\|\le \|s_{N'}^*\|$ which has been proved by Lemma 2.6 of \cite{carmon2019gradient} for gradient descent steps on CR objective function $\phi$. $K'\le K$ follows from the nonconvex convergence rate of $\min_{0\le k\le K-1}\|g_k\|^2$ where $g_K:=\nabla \phi_{N'}(s_K')$. Finally, eq. \eqref{grad_final} follows from $\|g_{K'}\|\le L_{\Phi}\epsilon'^2$ and $\nabla \phi(s_K')\approx \nabla \phi_{N'}(s_K')=g_K$.}
\begin{proof}
    Since $s^*=\argmin_s\phi(s)$, $0=\nabla\phi(s^*)=g+As^*+\frac{\|s^*\|}{2\eta_x}s^*$, which implies that
    \begin{align}
        \|g\|=&\Big\|As^*+\frac{\|s^*\|}{2\eta_x}s^*\Big\|\le\|A\|\|s^*\|+\frac{\|s^*\|^2}{2\eta_x} \overset{(i)}{\le}2L_1\kappa(3\epsilon')+\frac{(3\epsilon')(7L_1\kappa\eta_x)}{2\eta_x}\le 13L_1\kappa\epsilon'\label{g_upper}
    \end{align}
    where (i) uses $\|A\|\le2L_1\kappa$, $\|s^*\|\le 3\epsilon'$ and $\|s^*\|\le s_{\max}:=7L_1\kappa\eta_x$. 
    
    Following the proof of eq. \eqref{Adiff}, we can prove that when $N'=\frac{\ln[2L_1\kappa\epsilon'^{-1}\max(24\eta_x,7/L_{\Phi})}{\ln[(1-\kappa^{-1})^{-1}]}$
    \begin{align}
        \|A_{N'}-A\|=\|(I+\eta_vH_{22})^{N'}\|\le (1-\kappa^{-1})^{N'}\|A_0-A\|\le 2L_1\kappa(1-\kappa^{-1})^{N'}\le\min\Big(\frac{\epsilon'}{24\eta_x},\frac{L_{\Phi}\epsilon'}{7}\Big),\label{Adiff_final}
    \end{align}
    %\min\Big(\frac{L_{\Phi}\epsilon'^2}{7L_1\kappa\eta_x},\frac{\epsilon'^3}{12L_1^2\kappa^2\eta_x^3}\Big)

    % Then, similar to the proof of Lemma \ref{lemma:smallg}, the main idea is to prove that the sequence $\{s_k'\}_{k=0}^K$ generated by Algorithm \ref{algo: Cubic_solve_final} is sufficiently close to the sequence $\{s_k''\}_{k=0}^{K-1}$ obtained by directly applying gradient descent algorithm to the original cubic-regularization problem \eqref{CR} as follows. 
    % \begin{align}
    %     g_k'=&\nabla \phi(s_k'')=g+As_k''+\frac{\|s_k''\|}{2\eta_x}s_k''\label{gk2}\\
    %     s_{k+1}''=&s_k''-\eta_sg_k' \label{s_update_gk2}
    % \end{align}
    Substituting eqs. \eqref{vk} \& \eqref{s_update2} into eq. \eqref{gk}, we obtain that 
    \begin{align}
    g_k=&g+A_{N'}s_k'+\frac{\|s_k'\|}{2\eta_x}s_k'=\nabla \phi_{N'}(s_k').\label{gk_sigma0}
    \end{align}
    Hence, the update rule \eqref{s_update_gk} can be seen as gradient descent step on solving the cubic-regularization problem \eqref{CR_N} without random perturbation. Therefore, based on Lemmas 2.3 and eq. (11) of \cite{carmon2019gradient}, $\|s_k'\|\le\|s_{N'}^*\|\le s_{\max}$ and that when $\eta_s=\frac{1}{22L_1\kappa}\le \frac{1}{4(\|A\|+s_{\max}/(2\eta_x))}$, we have
    \begin{align}
        \frac{\eta_s}{2}\sum_{k=0}^{K-1} \|g_k\|^2\le& \phi_{N'}(s_0')-\phi_{N'}(s^*)\nonumber\\
        \overset{(i)}{\le}&\|g\|\|s^*\|+\frac{1}{2}\|A_{N'}\|\|s^*\|^2+\frac{\|s^*\|^3}{6\eta_x}\nonumber\\
        \overset{(ii)}{\le}&3\epsilon'\big(13L_1\kappa\epsilon'+L_1\kappa(3\epsilon')+(3\epsilon')(7L_1\kappa\eta_x)/(6\eta_x)\big)\nonumber\\
        \le& \frac{115}{2}L_1\kappa\epsilon'^2,\nonumber
    \end{align}
    where (i) uses $s_0''=0$ and the definition of function $\phi_{N'}$ in eq. \eqref{CR_N}, (ii) uses eq. \eqref{g_upper}, $\|s^*\|\le \max(3\epsilon',7L_1\kappa\eta_x)$ and $\|A_{N'}\|\le 2L_1\kappa$. Rearranging the above inequality, we obtain that
    \begin{align}
        \min_{0\le k\le K-1}\|g_k\|^2\le&\frac{1}{K}\sum_{k=0}^{K-1} \|g_k\|^2\le
        \frac{115L_1\kappa\epsilon'^2}{K\eta_s}\overset{(i)}{\le} L_{\Phi}^2\epsilon'^4,\nonumber
    \end{align}
    where (i) uses $K=\frac{2530L_1^2\kappa^2}{L_{\Phi}^2\epsilon'^2}$ and $\eta_s=\frac{1}{22L_1\kappa}$. Hence, $K'=\min\{k:\|g_k\|\le L_{\Phi}\epsilon'^2\}\le K-1$. 
    
    Next, we will prove eq. \eqref{snorm_final}. On one hand, using the same proof logic as that of eq. \eqref{sKdown_smallg} (see the end of the proof of Lemma \ref{lemma:smallg}), we obtain that
    \begin{align}
        \phi_{N'}(s^*)-\phi_{N'}(s_{N'}^*)
        \overset{(i)}{=}&\frac{1}{2}(s^*-s_{N'}^*)^{\top}\Big(A_{N'}+\frac{\|s_{N'}^*\|}{2\eta_x}I\Big)(s^*-s_{N'}^*)+\frac{1}{12\eta_x}(\|s_{N'}^*\|-\|s^*\|)^2(\|s_{N'}^*\|+2\|s^*\|)\nonumber\\
        \overset{(ii)}{\ge}& \frac{\|s_{N'}^*\|}{12\eta_x}(\|s_{N'}^*\|^2+\|s^*\|^2-2\|s^*\|\|s_{N'}^*\|)\nonumber\\
        \overset{(iii)}{\ge}& \frac{\|s_{N'}^*\|^2}{12\eta_x}(\|s_{N'}^*\|-6\epsilon'),\label{dphi_final_low}
    \end{align}
    where (i) uses eq. (6) of \cite{carmon2019gradient}, (ii) uses Proposition 2.1 of \cite{carmon2019gradient} which states that $A+\frac{\|s_{N'}^*\|}{2\eta_x}I\succeq O$ and (iii) uses $\|s^*\|\le 3\epsilon'$. On the other hand, 
    \begin{align}
        &\phi_{N'}(s^*)-\phi_{N'}(s_{N'}^*)\nonumber\\
        &=\big(\phi_{N'}(s^*)-\phi(s^*)\big)+\big(\phi(s^*)-\phi_{N'}(s_{N'}^*)\big)\nonumber\\
        &\overset{(i)}{\le}\big(\phi_{N'}(s^*)-\phi(s^*)\big)+\big(\phi(s_{N'}^*)-\phi_{N'}(s_{N'}^*)\big)\nonumber\\
        &\overset{(ii)}{\le}\frac{1}{2}s^{*\top}(A_{N'}-A)s^*+\frac{1}{2}s_{N'}^{*\top}(A-A_{N'})s_{N'}^*\nonumber\\
        &\overset{(iii)}{\le}\frac{\epsilon'}{24\eta_x}(9\epsilon'^2+\|s_{N'}^*\|^2),\label{dphi_final_up}
    \end{align}
    where (i) uses $\phi(s^*)=\min_s\phi(s)\le\phi(s_{N'}^*)$, (ii) uses the definitions of $\phi(\cdot)$ and $\phi_{N'}(\cdot)$ in eqs. \eqref{CR} \& \eqref{CR_N} respectively, and (iii) uses eq. \eqref{Adiff_final} and $\|s^*\|\le 3\epsilon'$. Combining eqs. \eqref{dphi_final_low} \& \eqref{dphi_final_up} yields that
    \begin{align}
        \|s_{N'}^*\|\le 6\epsilon'+\frac{\epsilon'}{2\|s_{N'}^*\|^2}(9\epsilon'^2+\|s_{N'}^*\|^2)=6.5\epsilon'+\frac{9\epsilon'^3}{2\|s_{N'}^*\|^2}.\nonumber
    \end{align}
    Suppose $\|s_{N'}^*\|>7\epsilon'$ and substitute it into the right side of the above inequality. Then we obtain the contradiction that $\|s_{N'}^*\|<6.8\epsilon'$. Therefore, $\|s_{K'}\|\le\|s_{N'}^*\|\le 7\epsilon'$, i.e., eq. \eqref{snorm_final} is proved. Finally, eq. \eqref{grad_final} can be proved as follows
    \begin{align}
        \|\nabla \phi(s_{K'})\|\le&\|\nabla \phi(s_{K'})-g_{K'}\|+\|g_{K'}\|\nonumber\\
        \overset{(i)}{\le}&\|(A-A_{N'})s_{K'}\|+L_{\Phi}\epsilon'^2\nonumber\\
        \overset{(ii)}{\le}&\frac{L_{\Phi}\epsilon'}{7}(7\epsilon')+L_{\Phi}\epsilon'^2=2L_{\Phi}\epsilon'^2,
    \end{align}
    where (i) uses $\|g_{K'}\|\le L_{\Phi}\epsilon'^2$ and the definitions of $\phi(\cdot)$ and $\phi_{N'}(\cdot)$ in eqs. \eqref{CR} \& \eqref{CR_N} respectively, (ii) uses eq. \eqref{Adiff_final} and $\|s_{K'}\|\le 7\epsilon'$. 
\end{proof}

\begin{restatable}[Potential decrease for Inexact Cubic-LocalMinimax]{proposition}{proplyaInexact}\label{prop: lyapunov_cubic_inexact}
	Let \Cref{assum: fcubic} hold. For any $\alpha, \beta>0$, $0<\epsilon'\le \frac{\alpha L_1}{\beta L_G}$ and $\delta\in (0,1)$, choose $\eta_x=(168L_{\Phi}+120\alpha+168\beta)^{-1}$, $\eta_y=\frac{2}{L_1+\mu}$ and $N_t\ge\Theta\Big(\kappa\ln \frac{L_1\alpha \|\widetilde{s}_{t-1}\|+ L_1(\alpha+L_2\kappa)\|\widetilde{s}_t\|}{L_G\beta\epsilon'^2}\Big)$ (see eq. \eqref{eq: Nt2}). 
	When implementing Algorithm \ref{algo: Cubic_solve} at the $t$-th iteration, use hyperparameters in Lemma \ref{lemma:smallg} with $\delta'=\delta/T$ if $\|\nabla_1 f(x_t,y_{t+1})\|\le 4L_1^2\kappa^2\eta_x$, and use those in Lemma \ref{lemma:bigg} otherwise. Define the potential function $H_t:=\Phi(x_t)+(10L_{\Phi}+7\alpha+10\beta)\|\widetilde{s}_t\|^3$. Then, the output of Cubic-LocalMinimax satisfies the following potential decrease property with probability at least $1-\delta$. 
	\begin{align}
        H_{t+1}-H_t\le -(L_{\Phi}+\alpha+\beta)(\|\widetilde{s}_{t+1}\|^3+\|\widetilde{s}_t\|^3). \label{eq: potential_inexact}
    \end{align}
\end{restatable}
\red{Compared with Proposition \ref{prop: lyapunov_cubic} which also provides potential function decrease, Proposition \ref{prop: lyapunov_cubic_inexact} only has access to the inexact CR solution obtained from GDA-Cubic Solver (Algorithm \ref{algo: Cubic_solve}). In small and large gradient cases respectively, this inexact CR solution has properties given by Lemmas \ref{lemma:smallg} \& \ref{lemma:bigg}, and the Taylor expansion of $\Phi(x_{t+1})-\Phi(x_t)$ can be upper bounded by eqs. \eqref{phisK_smallg} \& \eqref{m_upper}, as shown in eqs. \eqref{Phidiff_smallg_inexactCR} \& \eqref{Phidiff_bigg_inexactCR}. Both cases yield eq. \eqref{eq: potential_inexact}.}
\begin{proof}  
    Following the proof of Proposition \ref{prop: lyapunov_cubic}, it can be seen that when $\epsilon'\le\frac{\alpha L_1}{\beta L_G}$ and\\ $N_t\ge\Theta\Big(\kappa\ln \frac{L_1\alpha \|\widetilde{s}_{t-1}\|+ L_1(\alpha+L_2\kappa)\|\widetilde{s}_t\|}{L_G\beta\epsilon'^2}\Big)$, the following bounds always hold, which are analogous to eqs. \eqref{eq: grad_goal_noKL} \& \eqref{eq: Hess_goal_noKL} with exact solutions $s_{t-1}$ and $s_t$ replaced by $\widetilde{s}_{t-1}$ and $\widetilde{s}_t$ respectively.
    \begin{align}
        &\|\nabla\Phi(x_t)-\nabla_1 f(x_t, y_{t+1})\| \le \beta(\|\widetilde{s}_t\|^2+\epsilon'^2),\label{eq: grad_goal_inexact}\\ 
        &\|\nabla^2\Phi(x_t)-G(x_t,y_{t+1})\| \le \alpha(\|\widetilde{s}_t\|+\epsilon'). \label{eq: Hess_goal_inexact}
    \end{align}

    Then we consider the following two cases. %when implementing Algorithm \ref{algo: Cubic_solve} at the $t$-th iteration of Algorithm \ref{algo: cubic-minimax-inexact}. 
    
    (Case 1) When $\|\nabla_1 f(x_t,y_{t+1})\|\le 4L_1^2\kappa^2\eta_x$, the output $s_K'$ satisfies eqs. \eqref{phisK_smallg} \& \eqref{sKdown_smallg_alg1} with probability at least $1-\delta'$. Also note that the input variables of Algorithm \ref{algo: Cubic_solve} are $g:=\nabla_1 f(x_t,y_{t+1})$ and $A:=G(x_t,y_{t+1})=H_{11}-H_{12}H_{22}^{-1}H_{21}$, the output $s_K'$ of Algorithm \ref{algo: Cubic_solve} is assigned to $\widetilde{s}_{t+1}$ which is later used for the update $x_{t+1}=x_t+\widetilde{s}_{t+1}$, and the optimal CR solution $s^*$ in Algorithm \ref{algo: Cubic_solve} corresponds to $s_{t+1}$ in Algorithm \ref{algo: cubic-minimax}. Therefore, eqs. \eqref{phisK_smallg} \& \eqref{sKdown_smallg} transform to the following inequalities at the $t$-th iteration of Algorithm \ref{algo: cubic-minimax}, which by applying union bound hold simultaneously for all $0\le t\le T-1$ with probability at least $1-T\delta'=1-\delta$. 
    \begin{align}
        %&\|s_{t+1}'\|\le \|s_{t+1}\|+\frac{\epsilon_{t+1}}{3L_1^2\kappa^2\eta_x} \label{sKup_smallg_alg1}\\
        &\nabla_{\!1}f(x_t,\! y_{t+1})^{\!\top}\!(\widetilde{s}_{t+1}\!-\!s_{t+1})\!+\!\frac{1}{2}\widetilde{s}_{t+1}^{\!\top}G(\!x_t,\! y_{t+1}\!)\widetilde{s}_{t+1}\!-\!\frac{1}{2}s_{t+1}^{\!\top}G(\!x_t,\! y_{t+1}\!)s_{t+1}\!+\!\frac{\|\widetilde{s}_{t+1}\|^3\!-\!\|s_{t+1}\|^3}{6\eta_x}\!\nonumber\\
        &\le\!\!\frac{\epsilon'\|s_{t+1}\|^2}{240\eta_x}+2L_{\Phi}\epsilon'^3 \label{phisK_smallg_alg1}\\
        &\|s_t\|\le 2\|\widetilde{s}_t\|+\frac{\epsilon'}{20}+\frac{24\eta_xL_{\Phi}\epsilon'^3}{\|s_t\|^2}. \label{sKdown_smallg_alg1}
    \end{align}
    Based on eq. \eqref{sKdown_smallg_alg1}, if $\|s_t\|>\epsilon'$, then $\|s_t\|\le 2\|\widetilde{s}_t\|+\epsilon'$ since $\eta_x\le (168L_{\Phi})^{-1}$. Otherwise, $\|s_t\|\le \epsilon'$. Combining the two cases yields the following inequality.
    \begin{align}
        \|s_t\|\le 2\|\widetilde{s}_t\|+\epsilon'\Rightarrow \|s_t\|^2\le 8\|\widetilde{s}_t\|^2+2\epsilon'^2. \label{stdown_smallg}
    \end{align}
    
    Therefore, 
    \begin{align}
        &\Phi(x_{t+1})-\Phi(x_t)\nonumber\\
        &\overset{(i)}{\le} \widetilde{s}_{t+1}^{\top}\nabla\Phi(x_t)+\frac{1}{2}\widetilde{s}_{t+1}^{\top}\nabla^2\Phi(x_t)\widetilde{s}_{t+1}+\frac{L_{\Phi}}{6}\|\widetilde{s}_{t+1}\|^3\nonumber\\
        &=\widetilde{s}_{t+1}^{\top}\big(\nabla\Phi(x_t)-\nabla_1 f(x_t, y_{t+1})\big)+\widetilde{s}_{t+1}^{\top}\nabla_1 f(x_t, y_{t+1})\nonumber\\
        &\quad+\frac{1}{2}\widetilde{s}_{t+1}^{\top}\big(\nabla^2\Phi(x_t)-G(x_t,y_{t+1})\big)\widetilde{s}_{t+1}+\frac{1}{2}\widetilde{s}_{t+1}^{\top}G(x_t,y_{t+1})\widetilde{s}_{t+1}+\frac{L_{\Phi}}{6}\|\widetilde{s}_{t+1}\|^3\nonumber\\
        &\overset{(ii)}{\le} \beta\|\widetilde{s}_{t+1}\|(\|\widetilde{s}_t\|^2+\epsilon'^2)+\frac{\alpha}{2}\|\widetilde{s}_{t+1}\|^2(\|\widetilde{s}_t\|+\epsilon')+s_{t+1}^{\top}\nabla_1 f(x_t, y_{t+1})+\frac{1}{2}s_{t+1}^{\!\top}G(\!x_t,\! y_{t+1}\!)s_{t+1}\nonumber\\
        &\quad+\frac{\|s_{t+1}\|^3}{6\eta_x}+\Big(\frac{L_{\Phi}}{6}-\frac{1}{6\eta_x}\Big)\|\widetilde{s}_{t+1}\|^3+\frac{\epsilon'\|s_{t+1}\|^2}{240\eta_x}+2L_{\Phi}\epsilon'^3\nonumber\\
        &\overset{(iii)}{\le}\Big(\frac{\alpha}{2}+\beta\Big)(2\|\widetilde{s}_{t+1}\|^3+\|\widetilde{s}_t\|^3+\epsilon'^3)-\frac{\|s_{t+1}\|^3}{12\eta_x}+\Big(\frac{L_{\Phi}}{6}-\frac{1}{6\eta_x}\Big)\|\widetilde{s}_{t+1}\|^3+\frac{4\epsilon'\|\widetilde{s}_{t+1}\|^2+\epsilon'^3}{120\eta_x}+2L_{\Phi}\epsilon'^3 \nonumber\\
        &\overset{(iv)}{\le}\Big(\frac{\alpha}{2}+\beta\Big)(3\|\widetilde{s}_{t+1}\|^3+2\|\widetilde{s}_t\|^3)+\Big(\frac{L_{\Phi}}{6}-\frac{1}{6\eta_x}\Big)\|\widetilde{s}_{t+1}\|^3\nonumber\\
        &\quad+\frac{4\|\widetilde{s}_{t+1}\|^2(\|\widetilde{s}_{t+1}\|+\|\widetilde{s}_t\|)+\|\widetilde{s}_{t+1}\|^3+\|\widetilde{s}_t\|^3}{120\eta_x}+2L_{\Phi}(\|\widetilde{s}_{t+1}\|^3+\|\widetilde{s}_t\|^3)\nonumber\\
        &\overset{(v)}{\le}\Big(\frac{\alpha}{2}+\beta\Big)(3\|\widetilde{s}_{t+1}\|^3+2\|\widetilde{s}_t\|^3)+\Big(\frac{L_{\Phi}}{6}-\frac{1}{6\eta_x}\Big)\|\widetilde{s}_{t+1}\|^3\nonumber\\
        &\quad +\frac{4\|\widetilde{s}_{t+1}\|^3+4(\|\widetilde{s}_t\|^3+\|\widetilde{s}_{t+1}\|^3)+\|\widetilde{s}_{t+1}\|^3+\|\widetilde{s}_t\|^3}{120\eta_x}+2L_{\Phi}(\|\widetilde{s}_{t+1}\|^3+\|\widetilde{s}_t\|^3)\nonumber\\
        &\le \Big(\frac{3\alpha}{2}+3\beta+\frac{13L_{\Phi}}{6}-\frac{1}{12\eta_x}\Big)\|\widetilde{s}_{t+1}\|^3+\Big(\alpha+2\beta+2L_{\Phi}+\frac{1}{24\eta_x}\Big)\|\widetilde{s}_t\|^3\nonumber\\
        &\overset{(vi)}{\le} -(11L_{\Phi}+8\alpha+11\beta)\|\widetilde{s}_{t+1}\|^3+(9L_{\Phi}+6\alpha+9\beta)\|\widetilde{s}_t\|^3\label{Phidiff_smallg_inexactCR}
    \end{align}
    where (i) uses $x_{t+1}=x_t+s_{t+1}$ and the fact that $\nabla^2\Phi(x)$ is $L_{\Phi}$-Lipschitz continuous (see the item \ref{item: ddPhi} of \Cref{prop_Phiystar2}), (ii) uses eqs. \eqref{eq: grad_goal_inexact}, \eqref{eq: Hess_goal_inexact} \& \eqref{phisK_smallg_alg1}, (iii) uses eqs. \eqref{eq: CRcond3} \& \eqref{stdown_smallg} and the inequality that $ab^2\le a^3\vee b^3\le a^3+b^3, \forall a,b\ge 0$, (iv) uses $\epsilon'^n\le \|\widetilde{s}_t\|^n\vee\|\widetilde{s}_{t+1}\|^n\le \|\widetilde{s}_t\|^n+\|\widetilde{s}_{t+1}\|^n, \forall 0\le t\le T'-2, n\in\{1,3\}$ based on the termination criterion of $T'$ in Algorithm \ref{algo: cubic-minimax-inexact}, (v) uses $ab^2\le a^3\vee b^3\le a^3+b^3, \forall a,b\ge 0$, and (vi) uses $\eta_x=(168L_{\Phi}+120\alpha+168\beta)^{-1}$. 
    
    (Case 2) When $\|\nabla_1 f(x_t,y_{t+1})\|>4L_1^2\kappa^2\eta_x$, similar to case 1, eq. \eqref{m_upper} transforms as follows in Algorithm \ref{algo: cubic-minimax}. 
    \begin{align}
        \nabla_{\!1}f(x_t,\! y_{t+1})^{\!\top}\!\widetilde{s}_{t+1} + \frac{1}{2} \widetilde{s}_{t+1}^{\!\top}G(\!x_t,\! y_{t+1}\!)\widetilde{s}_{t+1}+\frac{1}{6\eta_x}\|\widetilde{s}_{t+1}\|^3\le-\frac{1}{4}L_1^3\kappa^3\eta_x^2\label{CR_upper} 
    \end{align}
    
    Then, we obtain that
    \begin{align}
        &\Phi(x_{t+1})-\Phi(x_t)\nonumber\\
        &\le s_{t+1}^{\top}\nabla\Phi(x_t)+\frac{1}{2}s_{t+1}^{\top}\nabla^2\Phi(x_t)s_{t+1}+\frac{L_{\Phi}}{6}\|s_{t+1}\|^3\nonumber\\
        &=s_{t+1}^{\top}\big(\nabla\Phi(x_t)-\nabla_1 f(x_t, y_{t+1})\big)+s_{t+1}^{\top}\nabla_1 f(x_t, y_{t+1})\nonumber\\
        &\quad+\frac{1}{2}s_{t+1}^{\top}\big(\nabla^2\Phi(x_t)-G(x_t,y_{t+1})\big)s_{t+1}+\frac{1}{2}s_{t+1}^{\top}G(x_t,y_{t+1})s_{t+1}+\frac{L_{\Phi}}{6}\|s_{t+1}\|^3\nonumber\\
        &\overset{(i)}{\le} \beta\|s_{t+1}\|(\|s_t\|^2+\epsilon'^2)+\frac{\alpha}{2}\|s_{t+1}\|^2(\|s_t\|+\epsilon')+\Big(\frac{L_{\Phi}}{6}-\frac{1}{6\eta_x}\Big)\|s_{t+1}\|^3-\frac{1}{4}L_1^3\kappa^3\eta_x^2\nonumber\\
        &\overset{(ii)}{\le}-(11L_{\Phi}+8\alpha+11\beta)\|\widetilde{s}_{t+1}\|^3+(9L_{\Phi}+6\alpha+9\beta)\|\widetilde{s}_t\|^3 \label{Phidiff_bigg_inexactCR}
    \end{align}
    where (i) uses eqs. \eqref{eq: grad_goal_inexact}, \eqref{eq: Hess_goal_inexact} \& \eqref{CR_upper}, and (ii) follows the same proof logic as that of eq. \eqref{Phidiff_smallg_inexactCR}. 
    
    Eq. \eqref{Phidiff_smallg_inexactCR} in case 1 and eq. \eqref{Phidiff_bigg_inexactCR} in case 2 are the same. By rearranging them and using $H_t:=\Phi(x_t)+(10L_{\Phi}+7\alpha+10\beta)\|\widetilde{s}_t\|^3$, we can prove eq. \eqref{eq: potential_inexact}. 
\end{proof}

\thmglobalrateInexact*
\red{The proof logic is very similar to that of Theorem \ref{thm: 1b}, with the major difference that we leverage the properties of inexact CR solution given by Lemmas \ref{lemma:smallg}-\ref{lemma:CRfinal}, and the potential decrease given by Proposition \ref{prop: lyapunov_cubic_inexact} instead of Proposition \ref{prop: lyapunov_cubic}.}
\begin{proof}
%The proof logic follows that of the proof of Theorem 1 in \cite{wang2019stochastic}.

First, it can be easily verified that the hyperparameter choices of this Theorem fits those in \Cref{prop: lyapunov_cubic_inexact} with $\alpha=\beta=L_{\Phi}$. In particular, $\epsilon'=\frac{\epsilon}{106\sqrt{L_{\Phi}}}$ with $0<\epsilon\le \frac{106L_1\kappa}{\sqrt{L_{\Phi}}}$ and $\delta\in(0,1)$ satisfies $0<\epsilon'\le \frac{\alpha L_1}{\beta L_G}$ required by Proposition \ref{prop: lyapunov_cubic_inexact}, since $\eta_x=(456L_{\Phi})^{-1}$, $\alpha=\beta=L_{\Phi}$ and $L_G=L_{\Phi}/(1+\kappa)\le L_{\Phi}/\kappa$ (see Proposition \ref{prop_Phiystar2}). 

Suppose $T'\le T$ does not hold, i.e., $\|s_{t-1}\|\vee \|s_t\|>\epsilon'=\frac{\epsilon}{106\sqrt{L_{\Phi}}}, \forall 1\le t\le T$. Then, on one hand, telescoping eq. \eqref{eq: potential_inexact} over $t=0,1,\ldots,T-1$ yield that
\begin{align}
    H_0-H_T&\ge(L_{\Phi}+\alpha+\beta)\sum_{t=0}^{T-1}(\|\widetilde{s}_{t+1}\|^3+\|\widetilde{s}_t\|^3) \nonumber\\
    &\ge3L_{\Phi}\sum_{t=0}^{T-1}(\|s_t\|\vee\|s_{t+1}\|)^3 \nonumber\\
    &\ge 3TL_{\Phi}\Big(\frac{\epsilon}{106\sqrt{L_{\Phi}}}\Big)^3 \nonumber\\
    &\overset{(i)}{\ge}\Phi(x_0)-\Phi^*+\epsilon^2.\label{eq: Hdiff_low_inexact}
\end{align}
where (i) uses $T=397006\sqrt{L_{\Phi}}[\Phi(x_0)-\Phi^*+\epsilon^2]\epsilon^{-3}$. On the other hand, recalling the definition of $H_t$ in Proposition \ref{prop: lyapunov_cubic_inexact}, we have
\begin{align}
    H_0-H_T&=\Phi(x_0)-\Phi(x_T)+27L_{\Phi}(\|\widetilde{s}_0\|^2-\|\widetilde{s}_T\|^2)\stackrel{(i)}{\le} \Phi(x_0)-\Phi^*+\frac{\epsilon^2}{137}, \label{eq: Hdiff_up_inexact}
\end{align}
where (i) uses $\|s_0\|=\epsilon'=\frac{\epsilon}{106\sqrt{L_{\Phi}}}$ and $\Phi(x_T)\ge \Phi^*=\min_{x\in\mathbb{R}^m}\Phi(x)$. Note that eqs. \eqref{eq: Hdiff_low_inexact} \& \eqref{eq: Hdiff_up_inexact} contradict. Therefore, we must have $1\le T'\le T=397006\sqrt{L_{\Phi}}[\Phi(x_0)-\Phi^*+\epsilon^2]\epsilon^{-3}$. %\|s^*\|\ge&0.9\|s_K'\|\label{sK_ratio}

Since $\|\widetilde{s}_{T'}\|\le\epsilon'$, we have $\|\nabla_1 f(x_{T'},y_{T'+1})\|\le 4L_1^2\kappa^2\eta_x$. Otherwise, eq. \eqref{sK_lower} directly implies the contradiction that $\|\widetilde{s}_{T'}\|\ge L_1\kappa\eta_x=\frac{L_1\kappa}{456L_{\Phi}}>\epsilon'=\frac{\epsilon}{106\sqrt{L_{\Phi}}}$ (since $\epsilon<\frac{53L_1\kappa}{228\sqrt{L_{\Phi}}}$). Hence, $\|\nabla_1 f(x_{T'},y_{T'+1})\|\le 4L_1^2\kappa^2\eta_x$, which implies eq. \eqref{stdown_smallg} and thus implies $\|s_{T'}\|\le 2\|\widetilde{s}_{T'}\|+\epsilon'\le 3\epsilon'$. Therefore, the conditions of Lemma \ref{lemma:CRfinal} are met, so eqs. \eqref{snorm_final} \& \eqref{grad_final} hold. Rewriting eqs. \eqref{snorm_final} \& \eqref{grad_final} with $g\leftarrow\nabla_1 f(x_{T'-1},y_{T'})$, $A\leftarrow G(x_{T'-1},y_{T'})$ and $\widetilde{s}\leftarrow s_{K'}'$ yields that
\begin{align}
&\|\widetilde{s}\|\le 7\epsilon'\label{snorm_final_outer}\\
&\Big\|\nabla_1 f(x_{T'-1},y_{T'}) +G(x_{T'-1},y_{T'})\widetilde{s} +\frac{\|\widetilde{s}\|}{2\eta_x}\widetilde{s}\Big\|\le 2L_{\Phi}\epsilon'^2 \label{grad_final_outer}
\end{align}
%$K'=\min\{k:\|g_k\|\le L_{\Phi}\epsilon'^2\}\le K$ and the output $s_{K'}'$ satisfies $\|\nabla\phi(s_{K'}')\|\le \frac{\epsilon^2}{2}$. 
% \begin{align}
%     &\|s_{T'}\|\le 2\|\widetilde{s}_{T'}\|+\epsilon'\le 3\epsilon'\label{sT_upper}
%     %\\    \Rightarrow& \|\widetilde{s}_{T'}-s_{T'}\|\le 2\epsilon'\label{sdiff}
% \end{align}
Therefore,
\begin{align}
&\|\nabla \Phi(\widetilde{x}_{T'})\|\nonumber\\
&\overset{(i)}{=}\Big\|\nabla \Phi(\widetilde{x}_{T'})-\nabla_1 f(x_{T'-1},y_{T'}) -G(x_{T'-1},y_{T'})\widetilde{s}-\frac{\|\widetilde{s}\|}{2\eta_x}\widetilde{s}\Big\|+2L_{\Phi}\epsilon'^2 \nonumber\\
&\le \|\nabla \Phi(\widetilde{x}_{T'})\!-\!\nabla \Phi(x_{T'-1})\!-\!\nabla^2 \Phi(x_{T'-1})\widetilde{s}\|\!+\!\|\nabla \Phi(x_{T'-1})\!-\!\nabla_1 f(x_{T'-1},y_{T'})\|\nonumber\\
&\quad+\|\nabla^2 \Phi(x_{T'-1})\widetilde{s}-G(x_{T'-1},y_{T'}) \widetilde{s}\|+\frac{\|\widetilde{s}\|^2}{2\eta_x}+2L_{\Phi}\epsilon'^2 \nonumber\\
&\overset{(ii)}{\le} L_{\Phi}\|\widetilde{s}\|^2+\beta(\|\widetilde{s}_{T'-1}\|^2+\epsilon'^2) +7\alpha\epsilon'(\|\widetilde{s}_{T'-1}\|+\epsilon')+228L_{\Phi}(7\epsilon')^2+2L_{\Phi}\epsilon'^2\nonumber\\
&\overset{(iii)}{\le} 11191L_{\Phi}\epsilon'^2\overset{(iv)}{=}\epsilon^2, \label{eq: grad_bound_inexact}
\end{align}
where (i) uses eq. \eqref{grad_final_outer}, (ii) uses eqs. \eqref{eq: grad_goal_inexact}, \eqref{eq: Hess_goal_inexact} \& \eqref{snorm_final_outer}, $\widetilde{x}_{T'}=x_{T'-1}+\widetilde{s}$ and the item \ref{item: ddPhi} of \Cref{prop_Phiystar2} that $\nabla^2\Phi$ is $L_{\Phi}$-Lipschitz, (iii) uses $\eta_x=(168L_{\Phi}+120\alpha+168\beta)^{-1}=(456L_{\Phi})^{-1}$, $\|\widetilde{s}_{T'-1}\|\le\epsilon'$, $\alpha=\beta=L_{\Phi}$ and eq. \eqref{snorm_final_outer}, and (iv) uses $\epsilon'=\frac{\epsilon}{106\sqrt{L_{\Phi}}}$. Also, 
\begin{align}
\nabla^2 \Phi(\widetilde{x}_{T'}) &\overset{(i)}{\succeq}  G(x_{T'-1},y_{T'})-\|G(x_{T'-1},y_{T'})-\nabla^2 \Phi(x_{T'-1})\|I -\|\nabla^2 \Phi(\widetilde{x}_{T'})-\nabla^2 \Phi(x_{T'-1})\|I \nonumber\\
&\overset{(ii)}{\succeq} -\frac{1}{2\eta_x}\|s_{T'}\|I-\alpha(\|\widetilde{s}_{T'-1}\|+\epsilon')I-L_{\Phi}\|\widetilde{s}\|I \nonumber\\
&\overset{(iii)}{\succeq} -237L_{\Phi}\epsilon' I\succeq -3\sqrt{L_{\Phi}}\epsilon I, \label{eq: Hess_bound_inexact}
\end{align}
where (i) uses Weyl's inequality, (ii) uses $\widetilde{x}_{T'}=x_{T'-1}+\widetilde{s}$, eqs. \eqref{eq: CRcond2} \& \eqref{eq: Hess_goal_inexact} and the item \ref{item: ddPhi} of \Cref{prop_Phiystar2} that $\nabla^2\Phi$ is $L_{\Phi}$-Lipschitz, (iii) uses eq. \eqref{snorm_final_outer}, $\eta_x=(456L_{\Phi})^{-1}$, $\alpha=L_{\Phi}$, $\|\widetilde{s}_{T'-1}\|\le\epsilon'$, and (iv) uses $\epsilon'=\frac{\epsilon}{106\sqrt{L_{\Phi}}}$. Combining eqs. \eqref{eq: grad_bound_inexact} \& \eqref{eq: Hess_bound_inexact} proves eq. \eqref{eq: gH_rate_inexact} where $\mu(x)=\sqrt{\|\nabla \Phi(x)\|} \vee \frac{-\lambda_{\min}[\nabla^2 \Phi(x)]}{\sqrt{33L_{\Phi}}}$.

Finally, we compute the computation complexities. We have proved that the total number of cubic iterations satisfies $T'\le T=397006\sqrt{L_{\Phi}}[\Phi(x_0)-\Phi^*+\epsilon^2]\epsilon^{-3}= \mathcal{O}\big(\sqrt{L_2}\kappa^{1.5}\epsilon^{-3}\big)$ ($L_{\Phi}=L_2(1+\kappa)^3=\mathcal{O}(L_2\kappa^3)$ based on Proposition \ref{prop_Phiystar2}). We can also prove that the total number of gradient ascent iterations have the same bound $\sum_{t=0}^{T'-1} N_t \le \widetilde{\mathcal{O}} \big(\sqrt{L_2}\kappa^{2.5}\epsilon^{-3}\big)$ as that of Theorem \ref{thm: 1b}, following the proof logic at the end of Appendix \ref{sec: thm2}. Then we compute the total number of Hessian-vector product computations in cubic solvers (Algorithms \ref{algo: Cubic_solve} \& \ref{algo: Cubic_solve_final}). When implementing Algorithm \ref{algo: Cubic_solve} at the $t$-th iteration, if $\|\nabla_1 f(x_t,y_{t+1})\|\le 4L_1^2\kappa^2\eta_x$, then based on Lemma \ref{lemma:smallg}, the number of Hessian-vector product computations is proportional to 
\begin{align}
    KN'=&\frac{2\ln[(98L_1^3\kappa^3\eta_x^2)/(L_{\Phi}\epsilon'^3)]}{\ln[(1-\kappa^{-1})^{-1}]}\cdot\widetilde{\mathcal{O}}(L_1\kappa\eta_x\epsilon'^{-1}) \nonumber\\
    =&\widetilde{\mathcal{O}}\Big[L_1\kappa^2L_{\Phi}^{-1}\Big(\frac{\epsilon}{\sqrt{L_{\Phi}}}\Big)^{-1}\big]\nonumber\\
    =&\widetilde{O}(L_1L_{\Phi}^{-1/2}\kappa^2\epsilon^{-1})\overset{(i)}{=}\widetilde{O}(L_1L_2^{-1/2}\kappa^{1/2}\epsilon^{-1})\nonumber
\end{align}
where (i) uses $L_{\Phi}=L_2(1+\kappa)^3=\mathcal{O}(L_2\kappa^3)$. If $\|\nabla_1 f(x_t,y_{t+1})\|> 4L_1^2\kappa^2\eta_x$, based on Lemma \ref{lemma:bigg}, the number of Hessian-vector product computations is proportional to 
$K=\mathcal{O}(\kappa)$, whose order is not larger than the above $\widetilde{O}(L_1^2L_2^{-1/2}\kappa^{3/2}\epsilon^{-1})$ since $\epsilon\le L_1^2L_2^{-1/2}\kappa^{1/2}$. Based on Lemma \ref{lemma:CRfinal}, Algorithm \ref{algo: Cubic_solve_final} is implemented once with the total number of Hessian-vector product computations proportional to %$K=\frac{2530L_1^2\kappa^2}{L_{\Phi}^2\epsilon'^2}$, $N_k'=N'=\frac{\ln[2L_1\kappa\epsilon'^{-1}\max(24\eta_x,7/L_{\Phi})]}{\ln[(1-\kappa^{-1})^{-1}]}$
\begin{align}
    KN'=&\mathcal{O}\Big(\frac{L_1^2\kappa^2}{L_{\Phi}^2\epsilon'^2}\Big)\widetilde{O}(\kappa)=\widetilde{O}\Big(\frac{L_1^2\kappa^3}{L_{\Phi}^2(\epsilon/\sqrt{L_{\Phi}})^2}\Big)=\widetilde{O}\Big(\frac{L_1^2\kappa^3}{L_{\Phi}(\epsilon)^2}\Big)=\widetilde{O}(L_1^2L_2^{-1}\epsilon^{-2}).\nonumber
\end{align}
As a result, the total number of Hessian-vector product computations in Algorithm \ref{algo: cubic-minimax-inexact} is
\begin{align}
    &T\widetilde{O}(L_1L_2^{-1/2}\kappa^{1/2}\epsilon^{-1})+\widetilde{O}(L_1^2L_2^{-1}\epsilon^{-2})\nonumber\\
    &=\mathcal{O}(\sqrt{L_{\Phi}}\epsilon^{-3})\widetilde{O}(L_1L_2^{-1/2}\kappa^{1/2}\epsilon^{-1})+\widetilde{O}(L_1^2L_2^{-1}\epsilon^{-2})\nonumber\\
    &=\widetilde{O}(\sqrt{L_2\kappa^3}L_1L_2^{-1/2}\kappa^{1/2}\epsilon^{-4})+\widetilde{O}(L_1^2L_2^{-1}\epsilon^{-2})\nonumber\\
    &=\widetilde{O}(L_1\kappa^2\epsilon^{-4}),\nonumber
\end{align}
where (i) uses $\epsilon\le \frac{L_2\kappa^2}{L_1}$ to absorb the term $\widetilde{O}(L_1^2L_2^{-1}\epsilon^{-2})$. 
\end{proof}
%$K=\frac{2530L_1^2\kappa^2}{L_{\Phi}^2\epsilon'^2}$, $N_k'=\frac{2(K-k)\ln 2+2\ln[(14L_1^2\kappa^2\eta_x)/(L_{\Phi}\epsilon'^2)]}{\ln[(1-\kappa^{-1})^{-1}]}$

\section{Experiment Details}\label{sec: app: exp}

In this section, we present the details of both synthetic minimax problem and the neural network simulation.

\subsection{Details of Synthetic Minimax Problem}\label{sec: app: syn}

In this section we aim to solve the problem below:

\begin{equation}
    \min\limits_{\textbf{x} \in \mathcal{R}^3} \max\limits_{\textbf{y} \in \mathcal{R}^2} \frac{1}{N} \sum\limits_{i=1}^N\left[w(x_3) - \frac{y_1^2}{40} + A_i x_1 y_1 - \frac{5y_2^2}{2} + B_i x_2 y_2 \right]
\end{equation}

where $A_i$ and $B_i$ are independent random variables from uniform distribution range from 0.5 to 1.5 and $w(x_3)$ has the exact form below :

\begin{equation}
w(x)= \begin{cases}\sqrt{\epsilon}(x+(L+1) \sqrt{\epsilon})^2-\frac{1}{3}(x+(L+1) \sqrt{\epsilon})^3-\frac{1}{3}(3 L+1) \epsilon^{3 / 2}, & x \leq-L \sqrt{\epsilon} ; \\ \epsilon x+\frac{\epsilon^{3 / 2}}{3}, & -L \sqrt{\epsilon}<x \leq-\sqrt{\epsilon} ; \\ -\sqrt{\epsilon} x^2-\frac{x^3}{3}, & -\sqrt{\epsilon}<x \leq 0 \\ -\sqrt{\epsilon} x^2+\frac{x^3}{3}, & 0<x \leq \sqrt{\epsilon} \\ -\epsilon x+\frac{\epsilon^{3 / 2}}{3}, & \sqrt{\epsilon}<x \leq L \sqrt{\epsilon} ; \\ \sqrt{\epsilon}(x-(L+1) \sqrt{\epsilon})^2+\frac{1}{3}(x-(L+1) \sqrt{\epsilon})^3-\frac{1}{3}(3 L+1) \epsilon^{3 / 2}, & L \sqrt{\epsilon} \leq x .\end{cases}
\end{equation}

and we set $\epsilon = 0.01$ and $L = 5$ in our experiment. 
Through simple computing we can calculate that:
\begin{equation}
    \Phi(x) = w(x_3) + 10\Big(\frac{x_1}{N} \sum_{i=1}^N A_i\Big)^2 + \frac{1}{10} \Big(\frac{x_2}{N}\sum_{i=1}^N B_i\Big)^2
\end{equation}

\begin{figure}[htbp]

\centering
\includegraphics[width=0.6\linewidth]{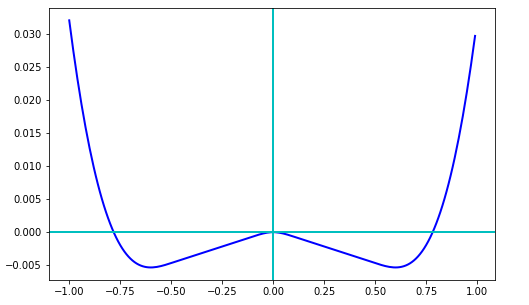}

\caption{Figure of the w-shaped function $w(x)$.}\label{result2}
	
\end{figure}

\subsection{Details of Neural Network Simulation}\label{subsec_nn}

%The experiments are conducted using a basic version of google colab. We use python 3.7 and pytorch version 1.12.1.

%We set $\lambda$ = 2.0 that suffices to make the maximization part strongly-concave. 
The network model we use is a convolutional neural network that consists of two convolution blocks followed by two fully connected layers. Specifically, each convolution block contains a convolution layer, a max-pooling layer with stride step 2, and a ReLU activation layer. The convolution layers in the two blocks have 1, 10 input channels and 10, 20 output channels, respectively, and both of them have kernel size 5, stride step 1 and no padding. The two fully connected layers have input dimensions 320, 50 and output dimensions 50, 10, respectively.

%(To comment??) In both stochastic Cubic-LocalMinimax and stochastic gda, we randomly select a mini-batch with size 512 to do optimization. The learning rate of the ascent step in both algorithm is 0.1 and both do 20 steps of gradient ascend before do descent in each epoch. \textcolor{blue}{We set $\eta_x=0.01$ in Algorithm \ref{algo: cubic-minimax-stoc} and set $\eta_v, \eta_s$ in algorithm \ref{algo: Cubic_solve} to $0.1, 0.001$, respectively. We only do one step of cubic descent in Cubic-LocalMinimax algorithm, but we do 10 steps of gradient descent in classical GDA with learning rate 0.01.} %The descent learning rate in both algorithms is 0.01 \red{($\eta_x=0.01$ in Algorithm \ref{algo: cubic-minimax-stoc}??)}, while in stochastic Cubic-LocalMinimax we only do one step of cubic descent, in classical GDA we do 10 steps of gradient descent in each epoch. We use efficiency algorithm \red{(What is efficiency algorithm?? Do you mean the small gradient case of Algorithm \ref{algo: Cubic_solve}??)} in this paper to estimate the cubic function, and the learning rate in solving the cubic problem is 0.1 and 0.001, respectively. \red{(Do they correspond to $\eta_v$ and $\eta_s$ in Algorithm \ref{algo: Cubic_solve}?? Also, what $K$, $N_k'$ and $\sigma$ did you use in Algorithm \ref{algo: Cubic_solve}??)} \textcolor{blue}{We set $N_k'$ 5} Since it may be hard to evaluate the exact value of $\Phi(x)$, we run 100 gradient ascent steps to make $\xi$ close to the maximum and compute the loss as estimated $\Phi(x)$. 

\subsection{\red{Details of GDN/TGDA/FR/CN}}\label{supp:gdn}
%\red{For GDN/CN/TGDA/FR, we use one inner iteration to estimate the inverse of Hessian matrix, the same step size with Cubic-LocalMinimax the $\eta_s=0.002$, $\eta_v=0.1$, $K=30$, with same number of gradient ascent steps  to generate adversarial samples in GDN/TGDA/FR, however in CN we use 5 steps of newton method instead.(here actually the graph of CN is K=50, since K=30 CN seems much lower, need it K aslo = 30 for CN?)}

%{\color{blue} Report the following details about GDN/CN/TGDA/FR. (1) The outer iteration formula and associated learning rates and batchsizes. For example, for CN, did you use $x_{t+1}=x_t-\alpha_x(D_{xx}^{-1}\partial_x)f(x_t,y_t)$, $y_{t+1=y_t-\alpha_y(\partial_{yy}^{-1}\partial_y)f(x_{t+1},y_t)}$ or have changed it? What learning rates and batchsizes are used for the two steps?}

\red{We apply a slight variant of GDN \cite{zhang2021newtontype} to the equivalent minimax optimization problem $\max_y\min_x -f(x,y)$ with 20 gradient ascent steps $y_{t+1}=y_t+0.01\nabla_2 f(x_t,y_t)$ followed by Newton descent step $x_{t+1}=x_t-(\nabla_{11}f)^{-1} (\nabla_1 f)(x_t,y_{t+1})$\footnote{Compared with the original GDN \cite{zhang2021newtontype}, we switch the roles between $x$ and $y$, since $f(x,\cdot)$ is strongly concave for which gradient ascent is sufficiently fast.}. $s=(\nabla_{11}f)^{-1} (\nabla_1 f)(x_t,y_{t+1})$ for the update of $x$ has analytical solution for synthetic simulation. For adversarial deep learning, we obtain $s$ by solving the equivalent optimization problem $\max_s F_1(s):=\frac12 s^{\top}\nabla_{11}f(x_t,y_{t+1})s+\nabla_1 f(x_t,y_{t+1})^{\top}s$  via 30 gradient ascent steps with learning rate 0.002 and early termination rule $\|\nabla F_1(s)\|<0.001$. Batchsizes 512 and 100 are used to compute all the stochastic gradients and Hessians for synthetic simulation and adversarial deep learning respectively.} %{\color{blue} in the synthetic problem, in the deep learning experiment is batchsize 512, below other algorithms are the same}

%To estimate the inverse of $\partial_{xx}$, we use a maximize optimization problem instead, which exact form is $ \max_s \frac12 s^{\top} \partial_{xx} s + \partial_x s^{\top} $. We use K=30 as the max time of inner iteration. When the 2-norm of gradient of s $ \partial_s < \epsilon = 0.001$ then break the inner iteration. We set the learning rate ${lr}_s=0.002$, which is same to other algorithms.

%for s and v is 0.002 and 0.1, respectively. 

%{\color{blue} 30 descent steps on $s$? How many ascent steps on $v$?} 5 steps newton method for ascend {\color{blue} which step? How to compute $(\partial_{yy}^{-1}\partial_y)f$?} In all the second order algorithms exact the cubic-localminimax we use a damping=0.1 (in fact no change, hyperparameters set casually), which is for example $\frac12 s^{\top} (\partial_{xx} + damping * I) s$ {\color{blue}which step?}

\red{For TGDA, we apply 10, 20 gradient ascent steps on $y$ with learning rates $0.01, 0.1$ for synthetic simulation and adversarial deep learning respectively, followed by 1 Newton-type update step $x_{t+1} = x_t - 0.01\big(\nabla_1 f-\nabla_{12} f(\nabla_{22} f)^{-1}\nabla_2 f\big)(x_t,y_{t+1})$ on $x$. $s=(\nabla_{22} f)^{-1}(\nabla_2 f)(x_t,y_{t+1})$ has analytical solution for synthetic simulation. For adversarial deep learning, we obtain $s$ by solving the equivalent optimization problem $\max_s F_2(s):=\frac12 s^{\top}\nabla_{22}f(x_t,y_{t+1})s-\nabla_2 f(x_t,y_{t+1})s$ via 30 gradient ascent steps with learning rate 0.1. Batchsizes 512 and 100 are used to compute all the stochastic gradients and Hessians for synthetic simulation and adversarial deep learning respectively.}

\red{For FR, we apply 20 gradient ascent steps on $y$ with learning rates 0.01, 0.1 for synthetic simulation and adversarial deep learning respectively, followed by 1 Newton-type update step $x_{t+1} = x_t - 0.01\nabla_1 f(x_t, y_{t+1})-\alpha(\nabla_{11}f)^{-1} (\nabla_{12}f) (\nabla_2 f)(x_t, y_{t+1})$ on $x$. $(\nabla_{11} f)^{-1}(\nabla_{12}f) (\nabla_2 f)(x_t, y_{t+1})$ has analytical solution for synthetic simulation. For adversarial deep learning, we obtain $s$ by solving the equivalent optimization problem $\max_s F_2(s):=\frac12 s^{\top}\nabla_{11}f(x_t,y_{t+1})s-(\nabla_{12}f) (\nabla_2 f)(x_t, y_{t+1})s$ via 30 gradient ascent steps with early termination rule $\|\nabla F(s)\|<0.001$ and learning rate 0.002. Batchsizes 512 and 100 are used to compute all the stochastic gradients and Hessians for synthetic simulation and adversarial deep learning respectively.} 

\red{CN algorithm follows the original update rule in \cite{zhang2021newtontype} that $x_{t+1}=x_t-G(x_t,y_t)^{-1}$\\
$(\nabla_1 f)(x_t,y_t)$, $y_{t+1}=y_t-(\nabla_{22}f)^{-1}(\nabla_2f)(x_{t+1},y_t)$ where $G(x,y)=\big[\nabla_{11} f - \nabla_{12} f (\nabla_{22} f)^{-1} \nabla_{21} f\big](x,y)$. For synthetic experiment, $s=G(x_t,y_t)^{-1}\nabla_1 f(x_t,y_t)$ has analytical solution and we compute $s=\big(G(x_t,y_t)-0.5I\big)^{-1}\nabla_1 f(x_t,y_t)$ instead to avoid singularity. For adversarial deep learning, to compute $s=G(x_t,y_t)^{-1}\nabla_1 f(x_t,y_t)$, we apply GDA with 50 iterations to the minimax optimization problem $\min_s \max_v (\nabla_1f(x_{t+1},y_t)^{\top} s+ \frac12 s^{\top} \nabla_{11}f(x_{t+1},y_t) s + s^{\top} \nabla_{12}f(x_{t+1},y_t)v + \frac12 v^{\top} \nabla_{22}f(x_{t+1},y_t)v$, where each iteration has 1 gradient ascent step on $v$ with learning rate 0.1 followed by 1 gradient descent step on $s$ with learning rate 0.002. Batchsizes 512 and 100 are used to compute all the stochastic gradients and Hessians for synthetic simulation and adversarial deep learning respectively.} %{\color{blue} How about the computation of $(\nabla_{22}f)^{-1}(\nabla_2f)(x_{t+1},y_t)$ for CN? Also, for synthetic experiment, GDN/TGDA/FR did not solve linear equation directly but CN did, right?}

\end{document}